\DeclareFontFamily{U}{mathx}{}
\DeclareFontShape{U}{mathx}{m}{n}{<-> mathx10}{}
\DeclareSymbolFont{mathx}{U}{mathx}{m}{n}
\DeclareMathAccent{\widehat}{0}{mathx}{"70}
\DeclareMathAccent{\widecheck}{0}{mathx}{"71}
\DeclareFontFamily{U}{wncy}{}
\DeclareFontShape{U}{wncy}{m}{n}{<->wncyr10}{}
\DeclareSymbolFont{mcy}{U}{wncy}{m}{n}
\DeclareMathSymbol{\Sha}{\mathord}{mcy}{"58}
\renewcommand*{\backrefalt}[4]{%
\ifcase #1 %
No citations.%
\or
\(\rightarrow\) page #2%
\else
\(\rightarrow\) pages #2%
\fi
}
\declaretheorem[name=Theorem,%
refname={theorem,theorems},%
Refname={Theorem,Theorems},%
within=section]{theorem}
\declaretheorem[name=Proposition,%
refname={proposition,propositions},%
Refname={Proposition,Propositions},%
sibling=theorem]{proposition}
\declaretheorem[name=Lemma,%
refname={lemma,lemmas},%
Refname={Lemma,Lemmas},%
sibling=theorem]{lemma}
\declaretheorem[name=Definition,%
refname={definition,definitions},%
Refname={Definition,Definitions},%
sibling=theorem]{definition}
\declaretheorem[name=Corollary,%
refname={corollary,corollaries},%
Refname={Corollary,Corollaries},%
sibling=theorem]{corollary}
\declaretheorem[name=Remark,%
refname={remark,remarks},%
Refname={Remark,Remarks},%
sibling=theorem,%
style=remark]{remark}
\declaretheorem[name=Expectation,%
refname={expectation,expectations},%
Refname={Expectation,Expectations},%
sibling=theorem,%
style=theorem]{}
\declaretheorem[name=Theorem,%
refname={theorem,theorems},%
Refname={Theorem,Theorems}]{introthm}
\renewcommand{\a}{\mathfrak{a}}
\let\P\relax
\newcommand{\into}{\hookrightarrow}
\newcommand{\Z}{\mathbb{Z}}
\newcommand{\A}{\mathbb{A}}
\newcommand{\R}{\mathbb{R}}
\newcommand{\Q}{\mathbb{Q}}
\newcommand{\C}{\mathbb{C}}
\newcommand{\s}{\mathfrak{s}}
\newcommand{\m}{\mathfrak{m}}
\newcommand{\n}{\mathfrak{n}}
\newcommand{\reg}{reg}
\newcommand{\mat}[4]{\begin{psmallmatrix}%
#1 & #2 \\%
#3 & #4 \\%
\end{psmallmatrix}}
\newcommand{\Mat}[4]{\begin{pmatrix}%
#1 & #2 \\%
#3 & #4 \\%
\end{pmatrix}}
\DeclareMathOperator{\G}{\mathbb{G}}
\DeclareMathOperator{\SL}{SL}
\DeclareMathOperator{\GL}{GL}
\DeclareMathOperator{\PGL}{PGL}
\DeclareMathOperator{\Res}{Res}
\newcommand{\Cc}{C_c^{\infty}}
\DeclareMathOperator{\x}{\times}
\DeclareMathOperator{\supp}{supp}
\DeclareMathOperator{\tr}{tr}
\DeclareMathOperator{\rint}{rint}
\DeclareMathOperator{\F}{\mathcal{F}}
\DeclareMathOperator{\P}{\mathcal{P}}
\DeclareMathOperator{\vol}{vol}
\DeclareMathOperator{\Ad}{Ad}
\DeclareMathOperator{\Nm}{Nm}
\DeclareMathOperator{\Aut}{Aut}
\DeclareMathOperator{\Hom}{Hom}
\DeclareMathOperator{\Lie}{Lie}
\DeclareMathOperator{\Spec}{Spec}
\DeclareMathOperator{\Schw}{\mathcal{S}}
\newcommand{\quash}[1]%
\DeclareMathOperator{\im}{im}
\DeclareMathOperator{\Spl}{spl}
\newcommand{\onto}{\twoheadrightarrow}
\DeclareMathOperator{\Gal}{Gal}
\newcommand{\1}{\mathbf{1}}
\renewcommand{\o}{\mathfrak{o}}
\renewcommand{\O}{\mathcal{O}}
\newcommand{\e}{\mathfrak{e}}
\DeclareMathOperator{\colim}{colim}
\newlist{todolist}{itemize}{2}
\setlist[todolist]{label=$\square$}
\title{The Relative Trace Formula for Galois Periods}
\author{Siddharth Mahendraker}
\begin{document}

\begin{abstract}
    Let $E/F$ be a quadratic extension of number fields. We introduce truncated geometric and spectral RTF distributions associated to a Galois symmetric pair $G \subset \Res_{E/F} G_E$, subject to the constraint that $G$ and $\Res_{E/F} G_E$ have the same split rank, and formulate a precise coarse RTF identity. Specializing to $\SL_{2, F} \subset \Res_{E/F} \SL_{2, E}$, we show that the truncated geometric RTF distribution converges, and is given by a linear polynomial in the truncation parameter. We then compute the fine geometric expansion explicitly, including the contribution of the regularized relative unipotent orbital integrals. We propose a geometric viewpoint which guided the computation of these unipotent terms.
\end{abstract}

\maketitle

\setcounter{tocdepth}{3}
\tableofcontents

\newpage

\section*{Introduction}

\subsection{Motivation}

In their seminal 1979 paper \cite{labesse-langlands}, Labesse and Langlands initiated the study of what are now known as global $L$-packets by considering the following simple question: Starting with a cuspidal automorphic representation $\Pi \subset L^2([\SL_{2, F}])$ and an element $g \in \GL_{2, F}(\A)$, we can form a new representation ${}^g\Pi \coloneqq \Pi \circ \Ad(g)$. When is the representation ${}^g\Pi$ automorphic?

This question can be rephrased by noting that if $\Pi$ decomposes as $\Pi = \otimes_v' \Pi_v$, then as one varies $g \in \GL_2(\A)$, the local constituents ${}^{g_v}\Pi_v$ vary through the local $L$-packet associated to $\Pi_v$. It is equivalent then to ask: If we modify $\Pi$ by replacing $\Pi_v$ with another representation in the same local $L$-packet at finitely many places $v$, when is the modified representation automorphic?

The key tool used to resolve this question is the \emph{stable trace formula} for $\SL_{2, F}$. This led Labesse and Langlands to formulate the first \emph{global multiplicity formula}, which gives an expression for the multiplicity in $L^2_{cusp}([\SL_{2, F}])$ of a representation in a global $L$-packet, in terms of the corresponding local $L$-parameters.

Now let $G$ be a connected reductive $F$-group and let $G' \subset G$ be a spherical subgroup. In the relative Langlands program, initiated by \cite{sv, bzsv}, a key question is to understand the portion of the discrete automorphic spectrum of $G$ which is $G'$-distinguished, i.e. the discrete automorphic representations $\Pi$ of $G$ for which there exists a $\varphi \in \Pi$ such that the period integral $$\mathscr{P}_{G'}(\varphi) \coloneqq \int_{[G']} \varphi(g)\,dg$$ is non-vanishing. The notion of $G'$-distinction also has a local counterpart; we say $\Pi_v$ is $G'$-distinguished if $\Hom_{G'(F_v)}(\Pi_v, \1)$ is non-zero. In this new setting, one can ask an analogue of the question of Labesse and Langlands. Namely, starting with a $G'$-distinguished cuspidal automorphic representation $\Pi \subset L^2_{disc}([G]^1)$, we obtain local representations $\Pi_v$ which are again $G'$-distinguished. If we modify $\Pi$ by replacing $\Pi_v$ with another $G'$-distinguished representation in the same local $L$-packet at finitely many places $v$, when is the modified representation again automorphic and $G'$-distinguished? Can one formulate a version of the global multiplicity formula in this setting?

This question suggests a plethora of new problems, both local and global in nature. Locally, as above, one has to contend with the existence of multiple distinguished representations appearing in a single $L$-packet. The new complication: individual local representations $\Pi_v$ can support multiple distinguishing functionals, that is, $$\dim \Hom_{G'(F_v)}(\Pi_v, \1) > 1.$$ This is the problem of parameterizing distinguished local $L$-packets. Globally, one needs to develop a regularized relative trace formula identity associated to $G' \subset G$, and carry out its stabilization.

Fix $E/F$ a quadratic extension of number fields. The simplest setting in which all of these complications arise is when $G = \Res_{E/F} G'_E$ and $G' \subset G$ is embedded as the fixed locus of the natural Galois involution. Such a pair $G' \subset G$ is known as a Galois symmetric pair, and periods associated to such pairs are called Galois periods. Among Galois symmetric pairs, the simplest interesting example is $\SL_{2, F} \subset \Res_{E/F} \SL_{2, E}$. This case can be understood as a twisted version of the original problem studied by Labesse and Langlands; indeed when $E = F \x F$ is the split quadratic étale $F$-algebra, the two questions coincide.

In ongoing joint work with Leslie, the results of this article will be applied to stabilize the RTF corresponding to the Galois period for $\SL_{2, F}$.

\subsection{What is Proved?}

In \cref{part-one}, we give a precise formulation of a coarse, regularized relative trace formula (RTF) identity for Galois symmetric pairs $G' \subset G$ of equal split rank. The key new contributions are appropriate definitions of truncated geometric and spectral kernels. Here $X = G/G'$ is the corresponding Galois symmetric space. The truncated geometric and spectral kernels yield truncated geometric and spectral RTF distributions, and the coarse RTF identity is formulated as an equality of these distributions for appropriately matching test functions.

In \cref{part-two}, we specialize these definitions to the case where $G' = \SL_{2, F}$ and $G = \Res_{E/F} \SL_{2, E}$, and prove that the truncated geometric RTF distribution converges. This allows us to define the \emph{regularized} geometric RTF distribution associated to $\SL_{2, F} \subset \Res_{E/F} \SL_{2, E}$.

Finally, in \cref{part-three}, we give an \emph{explicit formula} for the regularized geometric RTF distribution, suitable for stabilization by hand. This is the fine geometric expansion of the RTF for $\SL_{2, F} \subset \Res_{E/F} \SL_{2, E}$.

In the remainder of this introduction, we discuss each of these parts in more detail and state our main theorems.

\subsection{A Coarse RTF for Galois Symmetric Pairs}

Set $X \coloneqq G/G'$. The relative trace formula identity we formulate was designed to meet the following natural criteria: 
\begin{enumerate}
    \item The contribution from cuspidal automorphic representations to the spectral RTF should contain squares of $G'$-periods.
    \item The geometric RTF distribution should take as its starting point a test function $f \in \Cc(X(\A))$. This is because we eventually hope to (pre-) stabilize the geometric RTF, and in the relative setting, transfer is most naturally expressed as a matching between test functions on $X$ and $X_\e$, where $X_\e$ is an \emph{endoscopic Galois symmetric space} \cite{leslie}.
    \item The continuous contribution to the spectral RTF distribution should be amenable to an analysis in terms of regularized periods of Eisenstein series obtained via \emph{mixed truncation} \cite{zydor}.
\end{enumerate}

To this end, in \cref{assigning-test-fns}, we explain how, beginning with a test function $f \in \Cc(X(\A))$, one can associate a family of test functions $$\{ \; \Phi^\xi_1 \in \Cc(G(\A)^1) \; \}_{\xi \in \ker^1(F, X)}$$ where $\xi$ runs through the Galois cohomology set $\ker^1(F, X)$ indexing $G(F)$-orbits in $X(F)$.

Next in \cref{truncated-kernels}, we define truncated geometric and spectral kernels $$k^T_{f} : [G']^1 \to \C \text{ and } \{ \; k^T_{\Phi^{\xi}_1, \xi} : [G']^1 \x [G_\xi]^1 \to \C \; \}_{\xi \in \ker^1(F, X)}.$$ Here $T$ is a truncation parameter, and $G_\xi$ is a group obtained by twisting $G'$ by a distinguished choice of 1-cocycle representing $\xi$. The corresponding truncated geometric and spectral RTF distributions are given by
$$J^T_{geom}(f) \coloneqq \int_{[G']^1} k^T_f(h)\,dh \;\; \text{ and } J^T_{spec,\,\xi}(\Phi^{\xi}_1) \coloneqq \int_{[G']^1} \int_{[G_\xi]^1} k^T_{\Phi^{\xi}_1, \xi}(h_1, h_2)\,dh_2\,dh_1.$$

In fact, the truncated kernels admit natural refinements: $$k_f^T = \sum_{\o \in \mathcal{O}^X}k_{f, \o}^T \;\; \text{ and } \;\; k_{\Phi^{\xi}_1, \xi}^T = \sum_{\chi \in \mathcal{X}^G} k_{\Phi^{\xi}_1, \xi, \chi}^T.$$ On the geometric side, the refined kernel is indexed by the set of geometric data $\{ \o \in \O^X \}$ which consist of rational points in the fibers of the geometric invariant theory map $\chi : X \to X/\!/G'$ above rational points in $X/\!/G'$. On the spectral side, the refined kernel is indexed by the collection of cuspidal (or spectral) data $\{ \chi \in \mathcal{X}^G \}$ which label the components of the coarse Langlands decomposition of $L^2([G]^1)$.

We write $$J^T_{\o}(f) \coloneqq \int_{[G']^1} k^T_{f, \o}(x)\,dx \;\; \text{ and } \;\; J^T_{\xi, \chi}(\Phi) \coloneqq \int_{[G']^1}\int_{[G_\xi]^1} k^T_{\Phi, \xi, \chi}(x, y)\,dy\,dx$$ for the corresponding refined truncated geometric and spectral RTF distributions.


\begin{introthm}[Coarse RTF]\label{thmA}
    With the notation as above, let $f \in \Cc(X(\A))$ be a test function, and let $\{ \Phi^{\xi}_1 \}_{\xi \in \ker^1(F, X)}$ be the associated collection of test functions in $\Cc(G(\A)^1)$. Let $T \in \a_{0'}$ be a truncation parameter.
    
    Assume that for $T \in \a_{0'}$ sufficiently regular, $$\sum_{\o \in \O^X} \int_{[G']^1} |k^T_{f, \o}(x)|\,dx < \infty \text{ and } \sum_{\xi \in \ker^1(F, X)} \sum_{\chi \in \mathcal{X}^G} \int_{[G']^1} \int_{[G_\xi]^1} |k^T_{\Phi^{\xi}_1, \xi, \chi}(x, y)|\,dy\,dx < \infty.$$

    Then the associated geometric and spectral RTF distributions agree: $$J^T_{geom}(f) = \sum_{\xi \in \ker^1(F, X)} J^T_{spec,\,\xi}(\Phi^{\xi}_1).$$
\end{introthm}

Let us suppose that the hypotheses of the previous theorem are satisfied, and take an informal look at the two sides of the coarse RTF.

First consider the contribution to the spectral side corresponding to a cuspidal automorphic representation $\Pi$ of $[G]^1$, i.e. the contribution from a cuspidal datum $\chi$ corresponding to $\Pi$. The truncated spectral kernel $k^T_{\Phi^{\xi}_1, \xi, \chi}$ is built from parabolic kernels $k_{\Phi_1^{\xi}, \xi, P, \chi}$ and the cuspidality of $\Pi$ means these vanish identically for all non-trivial $P$. Thus the $\chi$-contribution to the spectral side of the RTF takes the form $$\sum_{\xi \in \ker^1(F, X)} \sum_{\varphi} \mathscr{P}_{G'}(R(\Phi^{\xi}_1)\varphi)\overline{\mathscr{P}_{G_\xi}(\varphi)},$$ where $\varphi$ runs through an orthonormal basis for the $\Pi$-isotypic component of $L^2([G]^1)$. Here $R(\Phi^{\xi}_1)$ denotes the right regular action of $\Phi^{\xi}_1 \in \Cc(G(\A)^1)$ on $L^2([G]^1)$.

On the geometric side, consider the contribution of $\o$, a regular relatively elliptic geometric datum with basepoint $\eta_\o \in \o$. Then the truncated geometric kernel $k^T_{f, \o}$ is built from parabolic kernels $k_{f, P, \o}$, and an immediate consequence of the ellipticity of $\o$ is that the contributions of the various parabolic terms $k_{f, P, \o}$ are zero for all non-trivial $P$. Thus, the $\o$-contribution to the geometric side of the RTF is given by $$\sum_{\tau} \vol([G'_{\eta_{\o}^{\tau}}])\int_{G'_{\eta_\o^{\tau}}(\A) \backslash G'(\A)} f(x^{-1}\eta_\o^{\tau}x)\,dx,$$ where $\tau$ runs through $\ker[H^1(F, G'_{\eta_\o}) \to H^1(F, G')]$ and $G_{\eta_{\o}^{\tau}}'$ is the stabilizer of $\eta_\o^{\tau}$ under the conjugation action of $G'$ on $X$.

Taken together, we see informally that the coarse RTF for the Galois symmetric pair $(G, G')$ is an equality of the form \begin{multline*}
    \sum_{\Pi} \sum_{\xi \in \ker^1(F, X)} \sum_{\varphi} \mathscr{P}_{G'}(R(\Phi^{\xi}_1)\varphi)\overline{\mathscr{P}_{G_\xi}(\varphi)} \; + \cdots \\= \sum_{\o \in \O^{X}_{reg. ell.}}\sum_{\tau} \vol([G'_{\eta_{\o}^{\tau}}])\int_{G'_{\eta_\o^{\tau}}(\A) \backslash G'(\A)} f(x^{-1}\eta_\o^{\tau}x)\,dx \; + \cdots
\end{multline*}

 Thus, we see that the equality of distributions in \cref{thmA} deserves its name.

In general, it is expected that the geometric and spectral RTF distributions are polynomial-exponentials in the truncation parameter $T$, when $T$ is sufficiently regular, and their constant terms yield \emph{regularized} coarse RTF distributions.

\subsection{Convergence of the Truncated Geometric RTF Distribution for $\SL_2$}

We now shift our focus to the Galois symmetric pair $\SL_{2, F} \subset \Res_{E/F} \SL_{2, E}$, where we endeavour to understand the entire regularized RTF distribution as explicitly as possible. In this article, we address only the regularized geometric RTF distribution. A forthcoming article will address the regularized spectral RTF distribution. For the remainder of this introduction, set $G = \Res_{E/F} \SL_{2, E}$, and let $G' = \SL_{2, F}$, embedded as the fixed locus of the Galois involution.

\begin{introthm}\label{thmB}
    With notation as above, let $f \in \Cc(X(\A))$. Then $$\sum_{\o \in \O^X} \int_{[G']} |k^T_{f, \o}(x)|\,dx < \infty$$ when the truncation parameter $T \in \a_{0'} \simeq \R$ is sufficiently large relative to $f$.
    
    Thus the truncated geometric RTF distribution $$J^T_{geom}(f) = \sum_{\o \in \O^X} J^T_{\o}(f)$$ converges for $T$ sufficiently large relative to $f$.
    
    Moreover, each summand $J^T_{\o}(f)$ is a linear polynomial in the truncation parameter, and thus $J^T_{geom}(f)$ is as well.
\end{introthm}

The proof of convergence makes systematic use of the relative truncation machinery introduced by Zydor \cite{zydor}.


\begin{remark} The proof that $T \mapsto J^T_{\o}(f)$ is linear as a function of $T$ yields a more refined result. If we write the corresponding distribution as $$J^T_{\o}(f) = J_{\o}(f) + J'_{\o}(f)T,$$ for $T$ sufficiently large, we are able to identify $J'_{\o}(f)$ in terms of the RTF associated to the Levi Galois symmetric space $X_M$. See \cref{geom-dist-is-linear-in-T} and \cref{interpreting-the-non-const-term}. The RTF for the Levi Galois symmetric space $X_M$ and more generally for all Galois symmetric pairs arising from tori of rank one is worked out explicitly in \cref{rtf-for-rk-one-tori}.
\end{remark}

With \cref{thmB} in hand, we define $$J_{geom}(f) \coloneqq \text{ constant term of $T \mapsto J^T_{geom}(f)$, for $T$ sufficiently large}.$$ This is the regularized geometric RTF distribution. It decomposes as a sum over geometric data $$J_{geom}(f) = \sum_{\o \in \O^X} J_\o(f).$$

\subsection{The Fine Geometric Expansion}

Finally, for each class of geometric data $\o \in \O^X$, we give concrete formulae for $J_{\o}(f)$ suitable for future use in the stabilization of the relative trace formula.

\begin{introthm}[Fine Geometric Expansion for $\SL_2$]\label{thmC}
    With the notation as above, fix $\o \in \O^X$ a geometric datum with basepoint $\eta \in \o$. Write $(G_\eta, G'_\eta)$ for the descendant of $(G, G')$ at $\eta$ (see \cref{descendants}).
    \begin{enumerate}
        \item If $\o$ is relatively elliptic, $$J_{\o}(f) =  \sum_{ \tau} \vol([G'_{\eta_\tau}]) \int_{G'_{\eta_\tau}(\A) \backslash G'(\A)} f(x^{-1} \eta_{\tau} x)\,dx,$$ where $\tau$ runs through $H^1(F, G_\eta')$ and $\eta_\tau$ is a representative for the corresponding rational $G'$-orbit. Here $\eta_1 = \eta$. 
        \item If $\o$ is regular semisimple but not elliptic, then $$J_\o(f) = -2\vol([G'_\eta]^1) \int_{G'_\eta(\A)^1 \backslash G'(\A)} f(x^{-1}\eta x)v(x)\,dx,$$ where $v(x) \coloneqq H_{0'}(x) + H_{0'}(wx)$ is a weight function, and $w$ is a representative for the non-trivial element of the Weyl group.
        \item If $\o = \o^+$ is the unipotent datum associated to the characteristic polynomial $p_{\o}(T) = (T - 1)^2 \in F[T]$, $$J_{\o}(f) = \vol([\SL_2])f(1) + \sum_{\substack{\kappa \in \widehat{[\G_m]}\\\kappa^2 = 1, \kappa \neq 1}} Z(f^\kappa, \kappa|\cdot|^1) + \left.\frac{d}{ds} sZ(f^1, |\cdot|^{1 + s})\right|_{s = 0},$$ for certain explicit functions $\{ f^\kappa \}_{\kappa \in \widehat{[\G_m]}, \kappa^2 = 1}$ constructed from $f$ whose definition we elide at the moment. Here $Z(-, \kappa|\cdot|^s)$ is the Tate zeta integral. An entirely analogous result holds for the unipotent datum $\o^-$ associated to the characteristic polynomial $(T + 1)^2 \in F[T]$.
    \end{enumerate}    
\end{introthm}

In fact, we obtain a completely explicit expression for all of $J^T_{\o}(f)$, not just its constant term.

We arrived at the formula for the unipotent contribution to the geometric RTF by considering the \emph{Springer resolution} of the rational nilpotent cone in $\mathfrak{sl}_{2, F}$. There is a natural ``correspondence''

\begin{center}
    \begin{tikzcd}
	& {T^*(B'\backslash G')} \\
	\mathbb{P}^1 = {B'\backslash G'} && {\overline{\o^{+, \reg}} = \mathcal{N}} 
	\arrow[from=1-2, to=2-1]
	\arrow[from=1-2, to=2-3]
\end{tikzcd}
\end{center}
associated with the Springer resolution which can be used to group together appropriate terms of the truncated unipotent kernel, in such a way that the integral defining $J_{\o^+}(f)$ decomposes into a constant term, followed by a term which unfolds into a sum of Tate zeta integrals. These Tate integrals can be understood as integrals along the open $\G_m$-orbits in the fibers of the cotangent bundle $T^*\mathbb{P}^1 \to \mathbb{P}^1$. The crucial detail required to perform this unfolding is to replace the natural squaring $\G_m$-action on the fibers of the left-most map with the standard $\G_m$-action. In the proof of \cref{thmC} part (3), this is accomplished by an application of adelic Fourier inversion, and uses the fact that the flag variety associated to $\SL_{2, F}$ and $\PGL_{2, F}$ are the same.

This observation is explained in more detail in \cref{geometric-viewpoint}.
 
\subsection{Acknowledgements}

It is a pleasure to thank my advisors Spencer Leslie and Sol Friedberg for their continual guidance and encouragement throughout the preparation of this article. This work would not have been possible without their support. I would especially like to thank Spencer for suggesting this project. I'd also like to thank Keerthi Madapusi and Dubi Kelmer for helpful discussions at various stages of this project. This forms part of my PhD thesis at Boston College.

\part{A Coarse RTF for Galois Symmetric Pairs}\label{part-one}

\section{Basic Notation and Conventions}\label{notation-conventions}

\subsection{Notation}

Let $F$ be a number field, let $\A$ denote the ring of adeles of $F$, and let $\A^{\x}$ denote the group of ideles of $F$. We denote the ring of finite adeles by $\A_f$. We write $| \cdot |_{\A} : \A \to \R^+$ for the usual adelic absolute value. If $v$ is a place of $F$, we write $F_v$ for the corresponding local field. Write $| \cdot |_v : F_v \to \R^+$ for the normalized $v$-adic absolute value.

Let $\overline{F}$ be a fixed algebraic closure of $F$. Fix $E \subset \overline{F}$ a quadratic extension of $F$. We write $c \in \Gal(E/F)$ for the unique non-trivial automorphism. If $x \in E$, we also frequently write $\overline{x}$ for $c(x)$. For $v$ a place of $F$, set $E_v \coloneqq E \otimes_F F_v$.

We denote by $\G_m$ the multiplicative group scheme over $F$. An $F$-algebraic group is an affine, finite-type group scheme over $F$. If $G$ is an $F$-algebraic group, we write $X_*(G) \coloneqq \Hom_F(\G_m, G)$, and $X^*(G) \coloneqq \Hom_F(G, \G_m)$ for the cocharacter and character groups of $G$, respectively. Write $[G]$ for the adelic quotient $G(F) \backslash G(\A)$.

\subsection{Conventions}

Unless specified otherwise, all schemes are assumed to be $F$-schemes and all morphisms between schemes are morphisms of $F$-schemes. To avoid clutter in subscripts, we will often simply write e.g. $\eta \in X$ instead of $\eta \in X(F)$.

If $K/F$ is a finite étale $F$-algebra, and $X$ is a quasi-projective $K$-scheme, then we denote by $\Res_{K/F} X$ the Weil restriction of scalars of $X$ along $\Spec K \to \Spec F$. It is an $F$-scheme.

When working explicitly in coordinates, we often fix a presentation $E = F(\tau^{1/2})$ where $\tau^{1/2} \in E$ is an element with trace zero.

\section{A Review of Galois Symmetric Spaces}\label{galois-symmetric-space-review}

The goal of this section is to review the properties of Galois symmetric spaces. We begin with the definitions and basic properties.

Throughout this section, we fix $E \subset \overline{F}$ a quadratic field extension of $F$ in the algebraic closure. Let $c \in \Gal(E/F)$ denote the unique non-trivial element.

\subsection{Definition, Basic Properties}

In this subsection, fix $H$ an affine $F$-algebraic group. The material in this section is purely geometric and is valid for arbitrary fields $F$. The material in this section also holds equally when $E$ is the split quadratic étale $F$-algebra $E = F \x F$ and the Galois automorphism $c$ is given by swapping factors, however the proofs in that case follow almost immediately from the definitions, so we focus exclusively on the non-split case.

\begin{definition}
    A Galois structure on a smooth affine $F$-algebraic group $H$ of finite-type is the data of an affine $F$-algebraic group $H'$ and an isomorphism $H \xrightarrow{\sim} \Res_{E/F} H'_E$.
\end{definition}

We often leave the isomorphism $H \simeq \Res_{E/F}H'_E$ implicit, and refer to a Galois structure on $H$ via the pair $(H, H')$, called a Galois symmetric pair.

Our first order of business is to describe how a Galois structure can be used to produce an involution on $H$, called the Galois involution. This boils down to producing an involution on $\Res_{E/F} H'_E$; the Galois involution on $H$ is obtained by conjugating this involution by the isomorphism $H \simeq \Res_{E/F} H'_E$.

Consider a test $F$-algebra $A$, and write $H' = \Spec \O(H')$. We have canonical identifications $$(\Res_{E/F} H'_E)(A) = \Hom_E(\O(H')_E, A \otimes_F E) \simeq \Hom_F(\O(H'), A \otimes_F E).$$ We also have a canonical map of $F$-algebras $1 \otimes_F c : A \otimes_F E \to A \otimes_F E$. Applying the functor $H' = \Hom_F(\O(H'), -)$ we obtain the Galois involution $$\theta(A) : (\Res_{E/F} H'_E)(A) \to (\Res_{E/F} H'_E)(A).$$ This construction is visibly functorial, so by Yoneda lemma, this determines a canonical involution $\theta : \Res_{E/F} H'_E \to \Res_{E/F} H'_E$. Moreover it is clear from this description that the canonical inclusion $H' \into \Res_{E/F} H'_E$ is given by the inclusion of the $\theta$-fixed locus in $\Res_{E/F} H'_E$.

\begin{proposition}\label{base-change-splits}\
    \begin{enumerate}
        \item The base change along $\Spec E \to \Spec F$ of $\Res_{E/F} H'_E$ is canonically identified with $H'_E \x H'_E$.
        \item Under this identification, the Galois involution $\theta : \Res_{E/F} H'_E \to \Res_{E/F} H'_E$ is identified with the map $H'_E \x H'_E \to H'_E \x H'_E$ which swaps coordinates.
    \end{enumerate}
\end{proposition}
\begin{proof}
    Let $A$ be a test $E$-algebra with structure morphism $s : E \to A$. Then there is a canonical isomorphism of $E$-algebras $A \otimes_F E \xrightarrow{\sim} A \otimes_E (E \otimes_F E)$ given by $a \otimes_F e \mapsto a \otimes_E 1 \otimes_F e$. The structure morphism for this last algebra is determined by the following diagram
    \begin{center}
        \begin{tikzcd}
        	F & E & A \\
        	E & {E \otimes_F E} & {A \otimes_E (E \otimes_F E)}
        	\arrow[from=1-1, to=1-2]
        	\arrow[from=1-1, to=2-1]
        	\arrow["s", from=1-2, to=1-3]
        	\arrow[from=1-2, to=2-2]
        	\arrow[from=1-3, to=2-3]
        	\arrow["{e \mapsto e \otimes_F 1}"', from=2-1, to=2-2]
        	\arrow["\ulcorner"{anchor=center, pos=0.125, rotate=180}, draw=none, from=2-2, to=1-1]
        	\arrow[from=2-2, to=2-3]
        	\arrow["\ulcorner"{anchor=center, pos=0.125, rotate=180}, draw=none, from=2-3, to=1-2]
        \end{tikzcd}
    \end{center}
    where every square is cocartesian. For any $E$-algebra $E \to B$, write ${}^cB$ for the $E$-algebra with structure morphism $E \xrightarrow{c} E \to B$. Then via the canonical isomorphism $E \otimes_F E \xrightarrow{\sim} E \x {}^cE$ given by $$e_1 \otimes_F e_2 \mapsto (e_1e_2, e_1c(e_2)),$$ we have an isomorphism $A \otimes_E (E \otimes_F E) \xrightarrow{\sim} A \x {}^cA$ given by $$ a \otimes_E e_1 \otimes_F e_2 \mapsto (e_1e_2 \cdot a, e_1c(e_2) \cdot a).$$

    Thus, we compute $(\Res_{E/F} H'_E)_E(A) = \Hom_E(\O(H')_E, A \otimes_F E) \simeq \Hom_E(\O(H')_E, A \otimes_E (E \otimes_F E)) \simeq \Hom_E(\O(H')_E, A \x {}^cA) \simeq \Hom_E(\O(H')_E, A) \x \Hom_E(\O(H')_E, {}^c A) \simeq \Hom_E(\O(H')_E, A) \x \Hom_E({}^c\O(H')_E, A) \simeq \Hom_E(\O(H')_E \otimes_E {}^c \O(H')_E, A)$. Finally, note that since $E \to \O(H')_E$ arises as a base change from $F$, ${}^c\O(H')_E = \O(H')_E$. This completes the proof of claim (1).
    
    To see the second claim, let $x \in \Hom_E(\O(H')_E, A \otimes_F E)$ and consider the following diagram, where the left-most square is cocartesian.
    \begin{center}
        \begin{tikzcd}
        	F & {\O(H')} \\
        	E & {\O(H')_E} & {A \otimes_F E} & {A \otimes_F E} \\
        	&& {A \otimes_E (E \otimes_F E)} & {A \otimes_E (E \otimes_F E)} \\
        	&& {A \x {}^c A} & {A \x {}^c A}
        	\arrow[from=1-1, to=1-2]
        	\arrow[from=1-1, to=2-1]
        	\arrow[from=1-2, to=2-2]
        	\arrow["y", dashed, from=1-2, to=2-3]
        	\arrow[from=2-1, to=2-2]
        	\arrow["\ulcorner"{anchor=center, pos=0.125, rotate=180}, draw=none, from=2-2, to=1-1]
        	\arrow["x", from=2-2, to=2-3]
        	\arrow["{(x_1, x_c)}"', from=2-2, to=4-3]
        	\arrow["{1 \otimes_F c}", from=2-3, to=2-4]
        	\arrow[from=2-3, to=3-3]
        	\arrow[from=2-4, to=3-4]
        	\arrow["(\star)", from=3-3, to=3-4]
        	\arrow[from=3-3, to=4-3]
        	\arrow[from=3-4, to=4-4]
        	\arrow["(\star\star)", from=4-3, to=4-4]
        \end{tikzcd}    
    \end{center}
    The dotted map $y \in \Hom_F(\O(H'), A \otimes_F E)$ is the $F$-morphism determined by $x$. The vertical arrows to the right are the various isomorphisms described above. The components of the map $(x_1, x_c) : \O(H')_E \to A \x {}^c A$ are obtained via the composite of the vertical isomorphisms and projection onto the first and second factor respectively. The two starred horizontal maps are defined by the vertical maps and the top-most map; we will compute them momentarily.

    Starting with an $x$ as above, the Galois involution acts by considering the corresponding $y$, composing it with $1 \otimes_F c$, and then forming the corresponding map $\O(H')_E \to A \otimes_F E$ by extending $E$-linearly. Thus, the second claim will follow if we can identify the bottom-most map $A \x {}^c A \to A \x {}^c A$ as the map which swaps coordinates.

    We calculate that the first horizontal map $(\star)$ is given by $$a \otimes_E e_1 \otimes_F e_2 = e_1a \otimes_E 1 \otimes_F e_2 \mapsfrom e_1a \otimes_F e_2 \mapsto e_1a \otimes_F c(e_2) \mapsto a \otimes_E e_1 \otimes_F c(e_2).$$ Finally, our desired map $(\star\star)$ is given by $$(e_1e_2 \cdot a, e_1c(e_2) \cdot a) \mapsfrom a \otimes_E e_1 \otimes_F e_2 \mapsto a \otimes_E e_1 \otimes_F c(e_2) \mapsto (e_1c(e_2) \cdot a, e_1e_2 \cdot a).$$ Thus, we see that the bottom-most map is indeed the map which swaps coordinates, and claim (2) is proved.
\end{proof}

\quash{
\begin{remark}
    From the proof above, one can also extract the $\Gal(\overline{F}/F)$-action on $H(\overline{F}) = H'_E(\overline{F} \otimes_F E) = \Hom_E(\O(H')_E, \overline{F} \otimes_F E)$; it is given by applying the Galois automorphism in the \emph{left} coordinate. If we identify $\Hom_E(\O(H')_E, \overline{F} \otimes_F E) \simeq \Hom_E(\O(H')_E, \overline{F}) \x \Hom_E(\O(H')_E, \overline{F})$, we see that the Galois action is trivial on $\Gal(\overline{F}/E) \subset \Gal(\overline{F}/F)$ and $c \in \Gal(E/F)$ acts on $(h_1, h_2) \in H(\overline{F}) \x H(\overline{F})$ by $c : (h_1, h_2) \mapsto (c(h_2), c(h_1))$. In this same model, $\theta$ acts by $(h_1, h_2) \mapsto (h_2, h_1)$.
\end{remark}
}

\begin{remark}
    If $H$ supports one Galois structure $(H, H')$, then up to isomorphism all others are classified by $E$-forms of $H'$, which by basic Galois cohomological considerations are classified by $H^1(\Gal(E/F), \Aut_E(H_E))$. When $H$ is semisimple and either simply connected or adjoint, $H \simeq \Res_{K/F} G$ for a unique finite étale $F$-algebra $K$ and group $G$ over $\Spec K$, which is connected, semisimple and absolutely almost-simple over each factor of $K$ \cite[Thm. 24.3]{milne}. Thus, in this case, $H$ admits a Galois structure if and only if $K = E \x \cdots \x E$ (say with $m$ factors) and each factor $G_i$ for $i = 1, \ldots, m$ descends to $F$. If we further assume $K = E$, and $H'$ is a fixed descent of $G$ to $F$, then the remaining Galois structures supported by $H$ are classified by the $E$-forms of the semisimple, absolutely almost-simple $F$-group $H'$. In many cases, classifying such forms boils down to classifying $E$-forms of certain $F$-algebras with auxiliary structure. For example, the Galois structures supported by $\Res_{E/F} \SL_{2, E}$ are in bijection with quaternion algebras $B/F$ which contain $E/F$; the corresponding Galois symmetric pair is $(\Res_{E/F} \SL_{2, E}, \SL_1(B))$.
\end{remark}

If $(H, H')$ is a Galois structure of $H$, we can form the associated Galois symmetric space $X \coloneqq H/H'$. This is an affine, finite-type $F$-scheme. By the proposition above, we see that $$X_E \simeq H'_E \x H'_E/\Delta H'_E.$$ The Galois symmetric space $X$ is equipped with canonical maps $\pi : H \onto X$, $s : X \into H$ such that the diagram below commutes:
\begin{center}
\begin{tikzcd}
	H && H \\
	& X
	\arrow["{h \mapsto h\theta(h)^{-1}}", from=1-1, to=1-3]
	\arrow["\pi"', two heads, from=1-1, to=2-2]
	\arrow["s"', hook, from=2-2, to=1-3]
\end{tikzcd}
\end{center}
We call $s : X \into H$ the symmetrization map, and we will often identify $X$ with its (schematic) image under $s$.

As a right quotient of $H$, $X$ admits a natural (left) $H$-action, which under the symmetrization map corresponds to $\theta$-twisted conjugation $$g \star x \coloneqq gx\theta(g)^{-1} \text{ for all $g \in H, x \in X$}.$$ Note that when restricted to $H' \subset H$, this action is simply conjugation.

It is very useful to have a more concrete description of $X$ inside $H$. Define $\widetilde{X} \coloneqq \{ h \in H : h = \theta(h)^{-1} \}$. As the locus in $H$ where two morphisms agree, it is manifestly a closed subscheme of $H$. One easily checks that the symmetrization map factors through $\widetilde{X}$.

\begin{proposition}\label{concrete-description-of-symm-spc}
    The canonical morphism $X \into \widetilde{X}$ is an isomorphism, and thus $$X = \{ h \in H : h\theta(h) = 1 \}.$$
\end{proposition}
\begin{proof}
    It suffices to check this after base change to $\overline{F}$. By the previous proposition, we can without loss of generality identify $H_{\overline{F}}$ with $H'_{\overline{F}} \x H'_{\overline{F}}$, and we can identify the Galois involution with the map which swaps coordinates. Thus $$\widetilde{X}_{\overline{F}} = \{ (h_1, h_2) \in H'_{\overline{F}} \x H'_{\overline{F}} : (h_1, h_2) = (h_2^{-1}, h_1^{-1}) \} = \{ (h_1, h_1^{-1}) \in H'_{\overline{F}} \x H'_{\overline{F}} \}.$$ But this is clearly in the image of $s_{\overline{F}} : X_{\overline{F}} \into H_{\overline{F}}$.
\end{proof}

\subsection{Levi Galois Symmetric Spaces}

For the remainder of this section, $H = \Res_{E/F} H'_E$ will denote a connected reductive $F$-algebraic group with Galois structure $(H, H')$. Note that as the fixed locus of the Galois involution, $H'$ is necessarily connected reductive \cite[\S1, Paragraph 3]{vust}.

If $P' \subset H'$ is a parabolic subgroup, it is clear that $\Res_{E/F} P'_E \subset \Res_{E/F} H'_E$ is a $\theta$-stable parabolic subgroup, and conversely, any $\theta$-stable parabolic subgroup arises in this way. Similarly, one can show that if $L' \subset H'$ is a Levi subgroup for $P'$, then $L \coloneqq \Res_{E/F} L'_E \subset \Res_{E/F} H'_E$ is a $\theta$-stable Levi for the corresponding parabolic $P = \Res_{E/F} P'_E$, which naturally endows $(L, L')$ with the structure of a Galois symmetric pair compatible with $(H, H')$. Let $X_L$ denote the corresponding Galois symmetric space.

\begin{proposition}
    Let $L$ be a $\theta$-stable Levi as above. The following diagram of $F$-schemes is cartesian:
    \begin{center}
        \begin{tikzcd}
	{X_L} && L \\
	X && H
	\arrow["{s_L}", from=1-1, to=1-3]
	\arrow[hook, from=1-1, to=2-1]
	\arrow[hook, from=1-3, to=2-3]
	\arrow["s"', from=2-1, to=2-3]
\end{tikzcd}
    \end{center}
    Thus, the natural map $X_L \into X$ is a closed embedding, and under the symmetrization map, $X \cap L = X \x_H L = X_L$.
\end{proposition}
\begin{proof}
    This is immediate from the description of $X$ in \cref{concrete-description-of-symm-spc}.
\end{proof}

\subsection{Rational Points, Adelic Points}\label{rational-adelic-pts}

Let $(H, H')$ be a Galois symmetric pair and let $X$ denote the corresponding symmetric space.

Recall that $\A_{\overline{F}}$ is defined as the filtered colimit $\A_{\overline{F}} \coloneqq \colim_{K/F} \A_{K}$ in the category of $F$-algebras, where $K/F$ runs over all finite extensions of $F$. For any finitely presented $F$-scheme $Y$, we have a canonical identification $Y(\A_{\overline{F}}) \simeq \colim_{K/F} Y(\A_K)$. In particular, $\Gal(\overline{F}/F)$ acts on $Y(\A_{\overline{F}})$, with fixed points $Y(\A_F)$. If $Y$ is a group $F$-scheme, we denote the corresponding pointed non-abelian Galois cohomology sets by $H^i(\A, Y)$ (for $i = 0, 1$). We also write $H^i(E/F, Y)$ for the Galois cohomology of $Y(E)$ with the natural $\Gal(E/F)$ action, and likewise $H^i(\A_E/\A_F, Y)$ for the Galois cohomology of $Y(\A_E)$ with the natural $\Gal(E/F)$ action (again $i = 0, 1$).

\begin{definition}\
    \begin{enumerate}
        \item $\ker^1(F, X) \coloneqq \ker[H^1(F, H') \to H^1(F, H)]$.
        \item $\ker^1(\A, X) \coloneqq \ker[H^1(\A, H') \to H^1(\A, H)]$.
    \end{enumerate}
\end{definition}

Since $X(\overline{F})$ (resp. $X(\A_{\overline{F}})$) is a single $\theta$-twisted $H(\overline{F})$-orbit (resp. $H(\A_{\overline{F}})$-orbit) the groups above classify the rational (resp. adelic) $G$-orbits in $X$. Write \begin{equation}\label{alpha}
    \alpha : \ker^1(F, X) \to \ker^1(\A, X)
\end{equation} for the canonical map induced by $\overline{F} \to \A_{\overline{F}}$.

\quash{
\begin{lemma}\
    \begin{enumerate}
        \item $\ker^1(F, X) \simeq H^1(E/F, H')$.
        \item $\ker^1(\A, X) \simeq H^1(\A_E/\A_F, H')$.
    \end{enumerate}
\end{lemma}
\begin{proof}
    We sketch the proof of (1), the proof of (2) follows mutatis mutandis. Inflation yields a map $H^1(E/F, H') \to H^1(F, H')$. By the non-commutative version of Shapiro's lemma, we can identify $H^1(F, H) \simeq H^1(E, H'_E) = H^1(E, H')$ in such a way that the map $H^1(F, H') \to H^1(F, H)$ is identified with the restriction map $H^1(F, H') \to H^1(E, H')$. From the inflation-restriction exact sequence, it follows that $\ker^1(F, X)$ can be identified with $H^1(E/F, H')$.
\end{proof}

\begin{remark}
    By the non-commutative version of Shapiro's lemma, we can identify $H^1(F, H) \simeq H^1(E, H'_E) = H^1(E, H')$, in such a way that map $H^1(F, H') \to H^1(F, H)$ is identified with the restriction map $H^1(F, H') \to H^1(E, H')$. Thus, for Galois symmetric spaces, $\ker^1(F, X)$ is identified with $H^1(E/F, H')$, i.e. the pure inner forms of $H'$ which become trivial after base change to $E$. In particular if $\eta \in X(F)$ lies in the $H(F)$-orbit corresponding to $\xi \in \ker^1(F, X)$, and by abuse we also write $\xi$ for the cocycle corresponding to $\eta$, the twisted $F$-algebraic group $H'_{\xi}$ (the stabilizer of $\eta$ under the $\theta$-twisted conjugation) is an $E$-form of $H'$.
\end{remark}
}

\begin{lemma}\
    \begin{enumerate}
        \item $\ker^1(F, X) \simeq \ker[H^1(E/F, H') \to H^1(E/F, H)]$.
        \item $\ker^1(\A, X) \simeq \ker[H^1(\A_E/\A_F, H') \to H^1(\A_E/\A_F, H)]$.
    \end{enumerate}
\end{lemma}
\begin{proof}
    We prove claim (1); claim (2) is proved analogously by working place by place. Let $\eta \in X(F)$ be a basepoint for the $H(F)$-orbit indexed by $\xi \in \ker^1(F, X)$. Then by \cref{base-change-splits}, we can find $h \in H(E) \subset H(\overline{F})$ such that $\eta = h\theta(h)^{-1}$. Thus, we see that $\xi$ is equivalent to the cocycle obtained by applying the inflation map to the cocycle $c \mapsto h^{-1}c(h)$. Note that $c(\eta) = \eta$ implies $\theta(h^{-1}c(h)) = h^{-1}c(h)$, and thus $h^{-1}c(h) \in H'(E)$. Changing the basepoint doesn't change the equivalence class of the corresponding cocycle, so we obtain an isomorphism $\ker^1(F, X) \simeq \ker[H^1(E/F, H') \to H^1(E/F, H)]$, as desired.
\end{proof}

This has the following important consequence

\begin{corollary}\label{twisted-stabilizers-give-galois-pairs}
    Let $\eta$ be a basepoint in the $\theta$-twisted $H(F)$-orbit indexed by $\xi \in \ker^1(F, X)$ and write $H_\xi$ for the $\theta$-twisted stabilizer of $\eta$, which is also the $F$-algebraic group obtained by twisting $H'$ using the cocycle determined by $\eta$.

    Then $(H_{\xi})_E \simeq H'_E$ and $(H, H_{\xi})$ is another Galois symmetric pair with Galois involution given by $\theta_\xi(h) \coloneqq \eta \theta(h) \eta^{-1}$.
\end{corollary}
\begin{proof}
    That $(H_\xi)_E \simeq H'_E$ is an immediate consequence of the previous lemma. Indeed $H^1(E/F, H')$ classifies $H'$-torsors over $F$ which become trivial after base change to $E$. The final claim follows from the fact that for $h \in H$, $h \star \eta = \eta$ if and only if $\theta_\xi(h) = h$.
\end{proof}

\begin{definition}
    We say a class $\xi \in \ker^1(\A, X)$ is relevant if it lies in the image of $\alpha : \ker^1(F, X) \to \ker^1(\A, X)$. We call an $H(\A)$-orbit relevant if its corresponding class in Galois cohomology is relevant.
\end{definition}

In particular, note that $\xi \in \ker^1(\A, X)$ is relevant if and only if a representative $\eta_\xi \in X(\A)$ for the corresponding $\theta$-twisted $H(\A)$-orbit can be chosen such that $\eta_{\xi} \in X(F) \subset X(\A)$.

\begin{lemma}\label{fibers-of-alpha-are-finite}
    The map $\alpha : \ker^1(F, X) \to \ker^1(\A, X)$ has finite fibers.
\end{lemma}
\begin{proof}
    Let $\zeta \in \ker^1(\A, X)$ be a relevant cocycle. Since $H^1(\A, H') \simeq \bigoplus_v H^1(F_v, H') \into \prod_v H^1(F_v, H')$, for all but finitely many $v$, $\zeta_v$ is trivial \cite[p.~298, Cor. 1]{platonov-rapinchuk}. Let $S$ be the finite set of places $v$ where $\zeta_v$ is non-trivial. Now, by \cite[p.~323, Thm. 6.19]{platonov-rapinchuk}, the fiber of $H^1(F, H') \to \prod_{v \notin S} H^1(F_v, H')$ above the trivial element is finite, so we win.
\end{proof}

In other words, each relevant $H(\A)$-orbit meets finitely many $H(F)$-orbits when intersected with $X(F)$.


\subsection{Integral Models and Basic Functions}\label{int-models-basic-fns}

Write $\O_F$ for the ring of integers of $F$. By spreading out, there exists a smooth connected reductive group scheme $\mathscr{H}' \to \Spec \O_F[1/N]$ with generic fiber isomorphic to $H'$, where $N$ is some finite product of prime ideals in $\Spec \O_F$. Set $\O_E[1/N] \coloneqq \O_E \otimes_{\O_F} \O_F[1/N]$, this is a finite free $\O_F[1/N]$-algebra. Then we can form the affine group scheme $\mathscr{H} \coloneqq \Res_{\O_E[1/N]/\O_F[1/N]} \mathscr{H}'_{\O_E[1/N]}$, and we can consider the integral Galois symmetric pair $(\mathscr{H}, \mathscr{H}')$, where the Galois involution is defined in exactly the same way as above.

If $v$ is a place of $F$, write $F_v$ for the completion of $F$ with respect to $v$, and write $\O_v$ for the ring of integers of $F_v$.

For all places $v$ of $F$ not dividing $N$, we define hyperspecial maximal compact subgroups $K_v \coloneqq \mathscr{H}(\O_v) \subset H(F_v)$ and $K'_v \coloneqq \mathscr{H}'(\O_v) \subset H'(F_v)$, and whenever we discuss maximal compact subgroups of $H(\A)$ or $H'(\A)$, we will implicitly assume that at almost all places these subgroups are given by hyperspecial maximal compact subgroups as above.

Define $\mathscr{X} \to \Spec \O_F[1/N]$ as the fppf quotient $\mathscr{H}/\mathscr{H}'$. This exists as an affine $\O_F[1/N]$-scheme, and yields an integral model for $X \to \Spec F$. For $v$ not dividing $N$, we set $X(\O_v) \coloneqq \mathscr{X}(\O_v)$.

Finally, for all $v$ not dividing $N$, we define the function $\1_{X(\O_v)} \in \Cc(X(F_v))$ to be the indicator function of the compact set $\mathscr{H}(\O_v) \star 1 = X(\O_v)$. The equality $\mathscr{H}(\O_v) \star 1 = X(\O_v)$ is a consequence of the vanishing of $H^1_{fppf}(\Spec \O_v, \mathscr{H}')$ which follows from the fact that $\mathscr{H}'$ is smooth and unramified at $v$.

\begin{definition}
    A factorizable compactly supported function $f : X(\A) \to \C$ is a product of functions of the form $f_v : X(F_v) \to \C \in \Cc(X(F_v))$, where for all but finitely many places $v$, $f_v = \1_{X(\O_v)}$.

    We define $\Cc(X(\A))$ to be the space spanned by finite linear combinations of factorizable compactly supported functions.
\end{definition}

Later, we will also make use of the notion of a Schwartz function $\Schw(\A)$ on the adeles. A factorizable Schwartz function $f : \A_F \to \C$ is a product of functions $f_v : F_v \to \C$, where for all finite places $v < \infty$, $f_v \in \Cc(F_v)$, for all infinite places $v | \infty$, $f_v$ is Schwartz in the usual sense, and for all but finitely many finite places $f_v = \1_{\O_v}$. The Schwartz space $\Schw(\A)$ is the space spanned by finite linear combinations of factorizable Schwartz functions.

\section{A Coarse RTF for Galois Symmetric Pairs}

The goal of this section is to introduce two families of integral kernels, one geometric the other spectral, associated to certain Galois symmetric pairs $(G, G')$, and explain how these kernels can be systematically modified to obtain a regularized coarse relative trace formula identity.

The truncated geometric kernel is essentially a regularized theta series over the $F$-points of $X = G/G'$. The truncated spectral kernel is a truncated version of the automorphic kernel associated to $G$.

\subsection{Setup}\label{setup}

Fix $(G, G')$ a Galois symmetric pair, where $G'$ is a connected reductive $F$-algebraic group and $G = \Res_{E/F} G'_E$. Let $\theta : G \to G$ denote the corresponding Galois involution.

As a general convention, notation without a prime refers to an object associated to $G$, while notation with a prime refers to an object associated to $G'$.

\subsubsection{Tori, Parabolic Subgroups}

\begin{lemma}
    Let $A_0' \subset G'$ be a maximal split torus. Then there exists a $\theta$-stable maximal split torus $A_0 \subset G$ such that $(A_0^\theta)^{\circ} = A_0'$.
\end{lemma}
\begin{proof}
    Consider $M \coloneqq Z_G(A_0')$, the scheme-theoretic centralizer of $A_0'$ in $G$. This is a connected reductive group, in fact a Levi subgroup of $G$. Moreover $M$ is $\theta$-stable. Indeed, if $g \in Z_G(A_0')$ then $gag^{-1} = a$ for $a \in A_0'$, so $\theta(g)a\theta(g)^{-1} = a$ for $a \in A_0'$ and thus $\theta(g) \in Z_G(A_0')$. By \cite[\S2, Prop. 2.3]{helminck-wang}, there exists a $\theta$-stable maximal torus $T \subset M$ whose split part $T_{spl}$ is a maximal split torus in $M$. Since $A_0'$ lies in some maximal split torus in $M$, and all such are conjugate by elements of $M(F)$, we see that $A_0'$ is a subtorus of every maximal split torus in $M$, and in particular, a subtorus of $T_{spl}$. Since $T$ is $\theta$-stable, $T_{spl}$ is also $\theta$-stable. It follows that $A_0' \subset T_{spl}^{\theta} \subset G'$, and thus $A_0' = (T_{spl}^{\theta})^{\circ}$ by maximality of $A_0'$. Finally, note that $T_{spl}$ is in fact a maximal split torus in $G$; if $T_{spl} \subset S$ where $S$ is a split torus in $G$, then $A_0' \subset T_{spl} \subset S$, and thus $A_0'$ commutes with $S$. Thus $T_{spl} \subset S \subset M$, contradicting the maximality of $T_{spl}$. The claim follows by setting $A_0 \coloneqq T_{spl}$.
\end{proof}

\begin{remark}
    The argument above applies to any involution, not just Galois involutions.
\end{remark}

Fix $A_0'$ a maximal split torus in $G'$, and let $A_0$ be a $\theta$-stable maximal split torus in $G$ such that $(A_0^\theta)^{\circ} = A_0'$. By the lemma above, such a choice is always possible.

We call the parabolic subgroups of $G$ containing $A_0$ semi-standard; the set of such is denoted $\F^G(A_0)$. For a subgroup $H \subset G$, we write $\F^G(H)$ to denote the semi-standard parabolic subgroups of $G$ containing $H$.

If $P \in \F^G(A_0)$, we write $N_P$ for the unipotent radical of $P$, and
$M_P$ for the unique Levi subgroup of $P$ containing $A_0$. Write $A_P$ to
denote the maximal $F$-split torus in the center of $M_P$; it is also the
subtorus of $A_0$ centralizing $M_P$. We write $\P^G(A_0)$ for the set of minimal semi-standard parabolics $P$ with $A_P = A_0$.

\subsubsection{A Simplifying Assumption}

We now make the following simplifying assumption:
\begin{equation}\label{assumption-p}
        \text{ The split rank of $G'$ and $G$ agree. }\tag{$\star$}
\end{equation}

This assumption is satisfied for any Galois symmetric pairs $(G, G')$ where $G'$ is a split reductive group.

With our choice of maximal split tori as above, this implies that $A_0' = A_0$. It is well-known that the semi-standard parabolic subgroups of $G$ relative to $A_0$ are given by $P_G(\lambda) = \{ g \in G : \lim_{t \to 0} \lambda(t)g\lambda(t)^{-1} \text{ exists} \}$ for $\lambda \in X_*(A_0)$. If $A_0' = A_0$, it follows that $$\theta(\lim_{t \to 0} \lambda(t)g\lambda(t)^{-1}) = \lim_{t \to 0} \theta(\lambda(t))\theta(g)\theta(\lambda(t))^{-1} = \lim_{t \to 0} \lambda(t)\theta(g)\lambda(t)^{-1}.$$ Thus, $g \in P_G(\lambda)$ if and only if $\theta(g) \in P_G(\lambda)$, and each $P_G(\lambda)$ is $\theta$-stable, and so each semi-standard parabolic subgroup of $G$ is $\theta$-stable.

It follows that

\begin{lemma}\label{lemma-p}
    Under the assumption \labelcref{assumption-p}, the map $P' \mapsto \Res_{E/F} P'_E$ is a bijection between $\F^{G'\!}(A_0')$ and $\F^G(A_0)$.
\end{lemma}

Moreover, if we fix a minimal semi-standard parabolic $P'_0 \in \F^G(A_0')$ in $G'$ with corresponding minimal parabolic $P_0 = \Res_{E/F} P'_{0, E}$, this bijection induces a bijection between standard parabolic subgroups for $G'$ and $G$.

\begin{remark}
    An example of a Galois symmetric pair for which this assumption fails is $(\Res_{E/F} \SL_{2, E}, \SL_1(B))$, where $B$ is a non-split quaternion algebra over $F$ which splits over $E$ (i.e. which contains a copy of $E$).
\end{remark}

\begin{remark}
    We expect similar results to hold when assumption \labelcref{assumption-p} is removed. The generalization to arbitrary Galois symmetric pairs is part of ongoing work. However, the additional complications that ensue do little to shed light on how modified kernels are constructed, so for the moment we are content to treat this simpler case.
\end{remark}

\subsubsection{Geometry of Chambers}\label{chambers}

Let $W$ denote the Weyl group of the pair $(G, A_0)$. Set $\a_0 \coloneqq X_*(A_0) \otimes_{\Z} \R$, and for any $P \in \F^G(A_0)$, set $\a_P \coloneqq X_*(A_P) \otimes_{\Z} \R$. Then we have a natural inclusion $\a_P \subset \a_0$. Similarly, write $\a_P^* \coloneqq X^*(A_P) \otimes_{\Z} \R$.

Choose a $W$-invariant inner product $\langle \cdot, \cdot \rangle$ on $\a_0$,
such that on the orthogonal complement of $\a_G \subset \a_0$, it agrees with
the Killing form of $G$. Moreover, assume it takes rational values on $X_*(A_0)
\x X_*(A_0)$. Using this inner product, we can realize $X^*(A_P)$ as a lattice in $\a_0$, contained in $X_*(A_0) \otimes_{\Z} \Q = X^*(A_0) \otimes_{\Z} \Q$.

Let $M_P$ be the standard Levi subgroup of $P$. Then $M_P$ is isogenous to $M_P^{der} \x Z(M_P)$ and thus the abelianization $M_P/M_P^{der}$ is isogenous to $Z(M_P)$, the center of $M_P$. Since $A_P$ is the maximal split subtorus of $Z(M_P)$, we have a canonical identification $X^*(A_P) = X^*(Z(M_P))$ and since $Z(M_P)$ is isogenous to $M_P/M_P^{der}$, we have an identification of $\Q$-lattices $X^*(Z(M_P)) \otimes_\Z \Q \simeq X^*(M_P) \otimes_\Z \Q$. Thus, up to torsion, the natural map $X^*(M_P) \to X^*(A_P)$ induced by restriction is an isomorphism. It follows that $\langle \cdot , \cdot \rangle$ yields a natural pairing between $X^*(M_P)$ and $\a_P = X_*(A_P) \otimes_{\Z} \R$. This will be used below in the definition of the Harish-Chandra map.

This inner product also induces a topology on $\a_0$; when we speak of open and
closed sets in $\a_0$ we are always referring to open and closed sets relative
to this topology. For a set $S \subset \a_0$, we write $\overline{S}$ for the
closure of $S$ in $\a_0$. If $C \subset \a_0$ is a cone, we write $\rint C$ for the relative interior of $C \subset \a_0$. It is defined as the largest open subset of the span of $C$ contained inside of $C$. Note that $\rint\, \{ 0 \} = \{ 0 \}$. 

For $P \in \F^G(A_0)$, write $\Delta_P \subset \a_P$ for the set of simple
roots of $A_P$ acting on $N_P$. For $P, Q \in \F^G(A_0)$ such that $P \subset
Q$, we have a natural inclusion $\a_Q \subset \a_P$; write $\a_P^Q$ for the
orthogonal complement of $\a_Q$ in $\a_P$. Let $\Delta_P^Q$ be the subset of
$\Delta_P$ that vanishes on $\a_Q$. Then $\Delta_P^Q \subset \a_P^Q$, and is
moreover a basis for this space. Define $\widehat{\Delta}_P^Q \subset \a_P^Q$
to be the corresponding dual basis relative to the inner product above. When $Q
= G$, we omit the superscript $G$ and write $\Delta_P \coloneqq \Delta_P^G$ and
$\widehat{\Delta}_P \coloneqq \widehat{\Delta}_P^G$.

Define the open cone $$\a_P^+ \coloneqq \{ H \in \a_P : \langle H, \alpha
\rangle > 0, \forall \alpha \in \Delta_P \}.$$ Then $\rint \overline{\a_P^+} =
\a_P^+$. For all $P \subset Q$, the cones $\overline{\a_Q^+}$ run
over the faces of $\overline{\a_P^+}$.

If $C$ is a cone and $F$ is a face of $C$, we define the angle cone $A(F, C)$ as the Minkowski sum of the span of $F$ and the cone $C$, $$A(F, C) \coloneqq \langle F \rangle + C.$$ If $C$ is cone, we define the dual cone $C^\vee$ to be the cone consisting of vectors which pair non-negatively with vectors in $C$, $$C^\vee \coloneqq \{ v \in \a_{0} : \langle v, C \rangle \geq 0 \}.$$

We have that $$A(\overline{\a_Q^+},
\overline{\a_P^+}) = \{ H \in \a_P : \langle H, \alpha \rangle \geq 0, \forall
\alpha \in \Delta_P^Q \},$$ and $$A(\overline{\a_Q^+}, \overline{\a_P^+})^\vee
= \{ H \in \a_P : \langle H, \varpi \rangle \geq 0, \forall \varpi \in
\widehat{\Delta}_P^Q \}.$$

\subsubsection{Reduction Theory}

For $P_0 = M_0N_0 \in \P(A_0)$, a minimal semi-standard parabolic, let $K$ be a
maximal compact subgroup of $G(\A)$ in good position relative to $M_0$. For $P
\in \F^G(A_0)$, define $A_P^{\infty}$ to be the connected component of the
$\R$-points of the maximal $\Q$-split torus in $\Res_{F/\Q}A_P$. Define the
Harish-Chandra map $H_P : M_P(\A) \to \a_P$ as the unique map such that $$\langle
\xi, H_P(m) \rangle = \log |\xi(m)|_{\A}, \text{ for all $\xi \in X^*(M_P)$}$$ where $m \in M_P(\A)$. This is a
surjective homomorphism. Write $M_P(\A)^1$ for the kernel of $H_P$; it is given
by $$M_P(\A)^1 \coloneqq \{ m \in M_P(\A) : \forall \xi \in X^*(M_P),
|\xi(m)|_{\A} = 1 \}.$$ The restriction of $H_P$ to $A_P^{\infty}$ induces an
isomorphism onto $\a_P$, which realizes $M_P(\A)$ as the product
$A_P^{\infty}M_P(\A)^1$ of commuting subgroups. Set $[M_P]^1 \coloneqq M_P(F) \backslash M_P(\A)^1$.

We extend $H_P$ to all of $G(\A)$ using the adelic Iwasawa decomposition by,
$$H_P(x) = H_P(m) \text{ for } x = nmk \in G(\A),\; n \in N_P(\A), \; m \in
M_P(\A), \; k \in K.$$ This extended map is well-defined and agrees with the
original map on $M_P(\A)$.

\subsubsection{Relative Chambers}\label{rel-chambers}

Henceforth, fix a minimal parabolic subgroup $P_0' \in \P^{G'}\!(A_0')$ of $G'$
containing $A_0'$ with Levi decomposition $P_0' = M_0'N_0'$. As above, let $K'$
be a maximal compact subgroup of $G'(\A)$ in good position relative to $M_0'$.
Note that $\a_{P_0'} = \a_{0'}$, however the latter vector space is independent
of the choice of $P_0'$.

For $P' \in \F^{G'}\!(A_0')$, define $$\P^G(P') \coloneqq \{ P \in \F^G(M_0) :
\a_{P'}^+ \cap \a_P^+ \neq \emptyset \}.$$



Then for $P' \in \F^G(P_0')$ we have a disjoint union decomposition
\begin{equation*}
    \a_{P'}^+ = \bigsqcup_{P \in \P^G(P')} \a_P^+, \;\;\;\; \overline{\a_{P'}^+} = \bigsqcup_{P \in \F^G(P_0')} \a_P^+.
\end{equation*}

Finally, note that the Weyl group of $A_0'$ in $G'$ agrees with the Weyl group for $A_0$ in $G$. Thus, the inner product on $\a_0$ induces an inner product on $\a_{0'}$ that is invariant under the Weyl group action.

If $S \subset \a_{0'}$, we write $[S]$ for the indicator function on $S$.

For $P, Q \in \F^G(P_0')$ such that $P \subset Q$, define
\begin{enumerate}
    \item[---]$\tau_P^Q \coloneqq [\rint A(\overline{\a_Q^+}, \overline{\a_P^+})]$. We write $\tau_P$ for $\tau_P^G$.
    \item[---]$\widehat{\tau}_P^Q \coloneqq [\rint A(\overline{\a_Q^+}, \overline{\a_P^+})^\vee]$. We write $\widehat{\tau}_P$ for $\widehat{\tau}_P^G$.
    \item[---] We write $\a^Q$ for the orthogonal complement
    of $\a_Q \subset \a_{0'}$.
    \item[---]$\varepsilon_P^Q \coloneqq (-1)^{\dim \a_P^Q}.$
    \item[---]$X_P, X^P, X_P^Q$, the projections of $X \in \a_{0'}$ onto $\a_P$, $\a^P$ and $\a_P^Q$ respectively.
\end{enumerate}

\subsubsection{Measures}

For each $P \in \F^G(A_0)$ we assign $\a_P$ the unique Haar measure for which the cocharacter lattice $X_*(A_P) \subset \a_P$ has covolume one. We define the measure on $A_P^{\infty}$ to be the unique measure which pushes forward to the measure on $\a_P$ under the isomorphism induced by $H_P : A_P^{\infty} \to \a_P$. We do the same for $G'$ by replacing $G$ with $G'$, $A_0$ with $A_0'$, $P$ with $P'$, etc.

We assign $G(\A)$ its canonical Tamagawa measure. This induces a measure on $[G]$ which we also call the Tamagawa measure. We assign $[G]^1$ the measure for which $$\int_{[G]^1} \int_{A_G^{\infty}} f(ay)\,da\,dy = \int_{[G]} f(x)\,dx$$ for any integrable $f : [G] \to \C$. With these definitions $[G]^1$ always has finite volume. Once again, we do the same with $G$ replaced by $G'$.

For all unipotent groups $N$, we fix a Haar measure on $N(\A)$ which assigns $[N]$ measure one. We choose the Haar measure on the maximal compacts $K \subset G(\A)$ and $K' \subset G'(\A)$ for which they are given measure one.

The choices above determine compatible measures on Levi subgroups $M_P(\A)$ such that $$\int_{P(F) \backslash G(\A)} f(g)\,dg = \int_{K} \int_{[M_P]^1} \int_{A_P^{\infty}} \int_{[N_P]} f(namk) e^{-2\rho_P(H_P(a))} \,dn\,da\,dm\,dk$$
for all integrable $f : P(F) \backslash G(\A) \to \C$. Here $\rho_P$ is the half-sum of positive weights for $A_P$ acting on $N_P$. Again, the same holds with $G$ replaced with $G'$, $P$ replaced with $P'$, etc.

\subsection{Defect and Twisted Orbit Representatives}

We follow the corresponding discussion in Lapid-Rogawski \cite[\S 4.4]{lapid-rogawski}. Recall that $X = G/G'$.

\begin{definition}[{\cite[Def. 4.4.1]{lapid-rogawski}}]
    Let $\mathscr{O}_{\xi} \subset X(F)$ be a $\theta$-twisted $G(F)$-orbit corresponding to $\xi \in \ker^1(F, X)$. The defect of $\mathscr{O}_{\xi}$, written $\mathscr{D}_G(\mathscr{O}_\xi)$, is the smallest Levi subgroup of a standard parabolic subgroup of $G$ which meets $\mathscr{O}_{\xi}$.
\end{definition} 

The crucial result we will use is

\begin{lemma}[{\cite[Lemma 4.4.1]{lapid-rogawski}}]\label{orbit-representatives}
    Let $L$ be a $\theta$-stable Levi subgroup in $G$. The map $$\mathscr{O} \mapsto \mathscr{O} \cap L$$ defines a bijection between $\theta$-twisted $G(F)$-orbits in $X(F)$ with defect contained in $L$ and $\theta$-twisted $L(F)$-orbits in $X_L(F)$. Moreover, $\mathscr{D}_L(\mathscr{O} \cap L) = \mathscr{D}_G(\mathscr{O})$.
\end{lemma}

This bijection allows us to make a particularly convenient choice of rational orbit representatives.

\begin{corollary}
        For each $\xi \in \ker^1(F, X)$ corresponding to a rational orbit $\mathscr{O}_\xi \subset X(F)$, there is a choice of representative $\eta_\xi \in \mathscr{O}_\xi$ such that $\eta_\xi \in X_{\mathscr{D}(\mathscr{O}_\xi)}$.
\end{corollary}
\begin{proof}
    By \cref{lemma-p}, every standard parabolic subgroup $P$ of $G$ is $\theta$-stable, as is the corresponding Levi $M_P$. Thus by applying \cref{orbit-representatives} above, for each $\xi \in \ker^1(F, X)$, we can choose a representative $\eta_\xi \in \mathscr{O}_{\xi}$ such that $\eta_\xi \in X_{\mathscr{D}(\mathscr{O}_\xi)}$.
\end{proof}

We say that such orbit representatives have minimal defect. Henceforth, we fix a choice of orbit representatives with minimal defect. When $\xi = 1$, we choose $\eta_1 = 1 \in G$. 

\subsection{Test Functions}\label{assigning-test-fns}

The final step before introducing the geometric and spectral kernels is to explain how to match functions on $X(\A)$ and $G(\A)$.

Fix $f \in \Cc(X(\A))$, and let $\zeta \in \im \alpha$ be a relevant class (see \cref{alpha} for the definition of $\alpha$). Define $f^{\zeta}$ to be the restriction of $f$ to the $G(\A)$-orbit labelled by $\zeta$. For each $\xi \in \alpha^{-1}(\zeta)$, let $\eta_\xi$ be the corresponding orbit representative with minimal defect. Then $G(\A) \star \eta_\xi$ is the orbit labelled by $\zeta$. Since the corresponding orbit map is a smooth surjection, there exists $\Phi^{\xi} \in \Cc(G(\A))$ such that $$f^{\zeta}(g \star \eta_\xi) = \int_{G_{\xi}(\A)} \Phi^{\xi}(gh)\,dh,$$ where $G_{\xi}$ is the the connected reductive stabilizer of $\eta_{\xi}$ with respect to $\theta$-twisted conjugation. Using reduction theory for $G_\xi$, there exists a factorization into commuting subgroups $G_\xi(\A) = G_\xi(\A)^1 A_\xi^{\infty}$.


Define $$\Phi^{\xi}_1(z) \coloneqq \int_{A_{\xi}^{\infty}} \Phi^{\xi}(zs)\,ds,$$ so that the equality above can be rewritten $$f^{\zeta}(g \star \eta_\xi) = \int_{G_{\xi}(\A)^1} \Phi_1^{\xi}(gh)\,dh.$$ Finally, we view $\Phi^{\xi}_1$ as a function in $\Cc(G(\A)^1)$ by restriction. Thus, for each relevant class $\zeta$, we obtain a finite collection of test functions $\{ \Phi^{\xi}_1 \in \Cc(G(\A)^1) \}_{\xi \in \alpha^{-1}(\zeta)}$.

Note that for all but finitely many relevant $\zeta$, $f^{\zeta}$ is identically zero, as $f$ is compactly supported and each $\theta$-twisted $G(\A)$-orbit is open in $X(\A)$. Thus, whenever we sum such functions over all $\xi \in \ker^1(F, X)$ or all relevant classes in $\ker^1(\A, X)$, the sum is in fact finite. Second, note that if $f$ is factorizable, so is $f^{\zeta}$ and so are each of the functions $\Phi^{\xi}_1$.


\subsection{Geometric and Spectral Kernels}

We are now in a position to introduce the geometric and spectral kernels. Again, fix $f \in \Cc(X(\A))$ as above.

\begin{definition}[Geometric Kernel]
    For each standard parabolic subgroup $P$ in $G$ with Levi decomposition $P = MN_P$ we define $$k_{f, P}(x) \coloneqq \sum_{\xi \in \ker^1(F, X_{M})} \sum_{\eta \in M(F) \star \eta_\xi} \int_{ N_P(\A)/N_{P, \eta_\xi}(\A) } f(x^{-1}\gamma n \eta_{\xi} \theta(n)^{-1} \theta(\gamma)^{-1} x)\,dn,$$ where for each $\eta \in M(F) \star \eta_{\xi}$, $\gamma \in M(F)$ is an element such that $\eta = \gamma\eta_\xi\theta(\gamma)^{-1}$.
\end{definition}

\begin{lemma}
    The geometric kernel is well-defined.
\end{lemma}
\begin{proof}
    We use the notation as above. We need to show that if $\gamma'$ is another element in $M_P(F)$ such that $\eta = \gamma'\eta_{\xi}\theta(\gamma')^{-1}$, then the inner-most integral above remains unchanged. Set $\nu \coloneqq \gamma^{-1}\gamma'$ and note that $\nu \in M_{P, \xi}(F)$, the $\theta$-twisted stabilizer of $\eta_{\xi} \in M_P(F)$. Now consider the change of variables $n \mapsto \nu n \nu^{-1}$. This clearly preserves $N_P(\A)$, and since $\nu \in M_{P, \xi}$ it also preserves $N_{P, \xi}(\A)$. Since $\nu$ is an $F$-rational point, the associated scalar by which the measure changes is one. Finally, note that the inner-most integrand above can be written $f(x^{-1}\gamma n \star \eta_\xi)$, and thus after the change of variables, we obtain $$f(x^{-1}\gamma \nu n \nu^{-1} \star \eta_\xi) = f( x^{-1} \gamma \nu n \star (\nu^{-1} \star \eta_{\xi})) = f(x^{-1} \gamma \nu n) = f(x^{-1} \gamma' n),$$ so we win.
\end{proof}

We also recall the usual spectral kernels:

\begin{definition}
    For $\Phi \in \Cc(G(\A)^1)$, and for each standard parabolic subgroup $P$ in $G$, define $$k_{\Phi, P}(x, y) \coloneqq \sum_{\gamma
\in M_{P}(F)}\int_{N_{P}(\A)} \Phi(x^{-1}\gamma n y) \,dn,$$
\end{definition}

We now explain the basic compatibility between these two kernels.

\begin{proposition}[Basic Compatibility]\label{basic-compat}
    Let $f \in \Cc(X(\A))$ and let $\{ \Phi_1^{\xi} \in \Cc(G(\A)^1) \}$ be the collection of test functions determined by $f$, where $\xi$ runs through $\ker^1(F, X)$. Then for all standard parabolic subgroups $P$ of $G$ with Levi decomposition $P = M N$, we have the equality $$\sum_{\xi \in \ker^1(F, X_M)} \int_{[G_{\xi}]^1} \sum_{\delta \in P_{\xi}(F) \backslash G_{\xi}(F)} k_{\Phi_1^{\xi}, P}(x, \delta y)\,dy =
    k_{f, P}(x),$$ for all $x \in G'(\A)^1$. Here the notation $H_\xi$ indicates the subgroup of $H$ which fixes $\eta_\xi$ under $\theta$-twisted conjugation, and $P_{\xi} = M_{\xi} N_{\xi}$ is a corresponding parabolic subgroup of $G_{\xi}$.
\end{proposition}
\begin{proof}
    Fix $\xi \in \ker^1(F, X_M)$. Then by our choice of orbit representatives with minimal defect, we have fixed a rational representative $\eta_\xi$ for the orbit corresponding to $\xi$ such that $\eta_\xi \in X_M(F)$ (of course $\eta_\xi$ could potentially come from an even smaller Levi).

    Thus, for $x \in G'(\A)$, we calculate
    \begin{align*}
        \sum_{\eta \in M(F) \star \eta_{\xi}} \int_{N(\A)/N_{\xi}(\A)} f(x^{-1} \gamma n \eta_\xi \theta(n)^{-1} \theta(\gamma)^{-1} x)\,dn &= \\
        \sum_{\gamma \in M(F)/M_{\xi}(F)} \int_{N(\A)/N_{\xi}(\A)} f(x^{-1} \gamma n \eta_\xi \theta(n)^{-1} \theta(\gamma)^{-1} x)\,dn &=  \\
        \sum_{\gamma \in M(F)/M_{\xi}(F)} \int_{N(\A)/N_{\xi}(\A)} \int_{G_\xi(\A)^1} \Phi_1^{\xi}(x^{-1} \gamma n y)\,dy\,dn
    \end{align*}

    We now unfold the inner-most integral along the unimodular subgroup $M_\xi(F) N_{\xi}(\A) \subset G_\xi(\A)^1$ to obtain $$\int_{M_\xi(F) N_{\xi}(\A) \backslash G_{\xi}(\A)^1 } \sum_{\gamma' \in M_{\xi}(F)} \int_{N_{\xi}(\A)} \Phi_1^{\xi}(x^{-1} \gamma \gamma' n n' y)\,dn'\,dy,$$ where we are free to commute $\gamma'$ and $n$ as $\gamma' \in M_\xi(F) \subset M(F)$. Inserting this integral above and folding, we obtain $$\sum_{\gamma \in M(F)} \int_{N(\A)} \int_{M_\xi(F)N_{\xi}(\A) \backslash G_\xi(\A)^1} \Phi^{\xi}_1(x^{-1}\gamma n y)\,dy \, dn.$$ Now one sees immediately that the inner-most integral is invariant under the change of variables $y \mapsto \overline{n} y$ for $\overline{n} \in N_\xi(F) \backslash N_\xi(\A)$. Moreover, this quotient can be identified with $P_\xi(F) \backslash M_\xi(F)N_\xi(\A)$ in such a way that measures are preserved. Folding along $P_\xi(F) \subset M_\xi(F)N_\xi(\A) \subset G_\xi(\A)^1$, we obtain $$\sum_{\gamma \in M(F)} \int_{N(\A)} \int_{P_{\xi}(F) \backslash G_\xi(\A)^1} \Phi^{\xi}_1(x^{-1}\gamma n y)\,dy\,dn.$$ Finally, unfolding along $P_\xi(F) \subset G_\xi(F) \subset G_\xi(\A)^1$, and rearranging we have shown $$\sum_{\eta \in M(F) \star \eta_{\xi}} \int_{N(\A)/N_{\xi}(\A)} f(x^{-1} \gamma n \eta_\xi \theta(n)^{-1} \theta(\gamma)^{-1} x)\,dn = \int_{[G_\xi]^1} \sum_{\delta \in P_\xi(F) \backslash G_\xi(F)} k_{\Phi^1_{\xi}, P}(x, \delta y)\,dy.$$ Summing over $\xi \in \ker^1(F, X_M)$ yields the desired identity.

    We have implicitly used our assumption \labelcref{assumption-p} to ensure that $P_\xi$ is a standard parabolic subgroup of $G_\xi$ with Levi decomposition $P_\xi = M_\xi N_\xi$. Indeed, one can define a new involution $\theta_\xi(g) \coloneqq \eta_{\xi}\theta(g)\eta_{\xi}^{-1}$ whose fixed locus in $G$ is precisely $G^{\theta_\xi} = G_\xi$ and which is also a Galois involution (see \cref{twisted-stabilizers-give-galois-pairs}). The assumption \labelcref{assumption-p} along with our judicious choice of $\eta_\xi$ ensure that $P$ is $\theta_\xi$-stable. Thus $P_\xi = P^{\theta_\xi} = G^{\theta_\xi} \cap P$ is clearly a standard parabolic with the stated Levi decomposition.
\end{proof}

\subsection{Truncated Kernels}\label{truncated-kernels}

With this preparation, we can now discuss the mechanics of modifying the spectral and geometric kernels.

The first step is to introduce refined kernels $$\{ k_{P, f, \o} \}_{\o \in \O^X} \text{ and } \{ k_{P, \Phi, \chi} \}_{\chi \in \mathcal{X}^G},$$ where $\{ \o \in \O^X \}$ denotes the collection of geometric data associated to $X$ and $\{ \chi \in \mathcal{X}^G \}$ is the collection of cuspidal data associated to $G$.

Cuspidal data $\{ \chi \in \mathcal{X}^G \}$ are defined in \cite[\S II.1]{moeglin-waldspurger}. They index the summands of the coarse Langlands decomposition of $$L^2([G]^1) = \hat{\bigoplus}_{\chi \in \mathcal{X}^G} L^2_{\chi}([G]^1).$$ In particular, for each cuspidal datum we obtain a projector onto the corresponding summand of $L^2([G]^1)$. Decomposing the right regular action of $\Phi \in \Cc(G(\A)^1)$ along these projectors, we obtain a decomposition of the parabolic spectral kernels $$k_{\Phi, P} = \sum_{\chi \in \mathcal{X}^G} k_{\Phi, P, \chi}.$$

We now introduce geometric data. Let $\chi : X \to X/\!/G'$ denote the canonical GIT quotient. Over $\overline{F}$, the space $(X/\!/G')_{\overline{F}} \simeq X_{\overline{F}}/\!/G'_{\overline{F}}$ can be understood as the moduli of closed $G'(\overline{F})$-orbits in $X(\overline{F})$; two points in $X_{\overline{F}}$ have the same image under $\chi_{\overline{F}}$ if and only if the closures of the $G'_{\overline{F}}$-orbits through these points meet. In particular, over $\overline{F}$, the fibers of $\chi_{\overline{F}}$ contain a unique closed $G'_{\overline{F}}$-orbit.

\begin{definition}[Geometric Data]
    Define the set of geometric data $\O^X$ as the collection of $F$-points of closed subschemes of $X$ obtained as fibers of the map $\chi : X \to X/\!/G'$ above $F$-rational points.
\end{definition}

In particular, we have a decomposition $$X(F) = \bigsqcup_{\o \in \O^X} \o.$$ This allows us to define the refined geometric kernels:

\begin{definition}[Refined Geometric Kernel]
    For each geometric datum $\o \in \O^X$, and each standard parabolic subgroup $P$ in $G$ with Levi decomposition $P = MN_P$ we define $$k_{f, P, \o}(x) \coloneqq \sum_{\xi \in \ker^1(F, X_{M})} \sum_{\eta \in (M(F) \star \eta_\xi \cap \o)} \int_{ N_P(\A)/N_{P, \eta_\xi}(\A) } f(x^{-1}\gamma n \eta_{\xi} \theta(n)^{-1} \theta(\gamma)^{-1} x)\,dn,$$ where for each $\eta \in M(F) \star \eta_{\xi}$, $\gamma \in M(F)$ is an element such that $\eta = \gamma\eta_\xi\theta(\gamma)^{-1}$.
\end{definition}

In particular, we see that for all standard parabolic subgroups $P$, $$k_{f, P} = \sum_{\o \in \O^X} k_{f, P, \o}.$$

Informally, the next step is to perform a mixed truncation along $G' \subset G$ in the $x$ variable.

\begin{definition}
    Let $T \in \a_{0'}$ be the truncation parameter. Let $f \in \Cc(X(\A))$. Define for each $\o \in \O^X$ the truncated geometric kernel $$k^T_{f, \o}(x) \coloneqq \sum_{P \in \F^G(P_0')} \varepsilon_P^G \sum_{\delta_1 \in P'(F) \backslash G'(F)} \widehat{\tau}_P(H_{0'}(\delta_1 x)_P - T_P) k_{f, P, \o}(\delta_1 x).$$ Similarly, let $\Phi \in \Cc(G(\A)^1)$, and define for each $\xi \in \ker^1(F, X)$ and each cuspidal datum $\chi \in \mathcal{X}^G$ the truncated spectral kernel $$k^T_{\Phi, \xi, \chi}(x, y) \coloneqq \sum_{P \in \F^G(P_0')} \varepsilon_P^G \sum_{\substack{\delta_1 \in P'(F) \backslash G'(F)\\ \delta_2 \in P_{\xi}(F) \backslash G_{\xi}(F)}}\widehat{\tau}_{P}(H_{0'}(\delta_1 x)_P - T_P) k_{\Phi, P, \chi}(\delta_1 x, \delta_2 y).$$
\end{definition}

\subsection{The Coarse RTF}\label{coarse-rtf}

Again let $T \in \a_{0'}$ be a truncation parameter. We define the coarse truncated geometric RTF distribution via $$J_{\o}^T(f) \coloneqq \int_{[G']^1} k^T_{f, \o}(x)\,dx, \text{ for all $f \in \Cc(X(\A))$.}$$ Likewise, for $\Phi \in \Cc(G(\A)^1)$ and for $\xi \in \ker^1(F, X)$, we define the $\xi$-part of the coarse truncated spectral RTF distribution via $$J_{\xi, \chi}^T(\Phi) \coloneqq \int_{[G']^1} \int_{[G_\xi]^1} k^T_{\Phi, \xi, \chi}(x, y)\,dy\,dx.$$ Moreover, define $J^T_{geom}(f) \coloneqq \sum_{\o \in \O^X} J^T_{\o}(f)$, and $J^T_{spec,\,\xi}(\Phi) \coloneqq \sum_{\chi \in \mathcal{X}^G} J^T_{\xi, \chi}(\Phi)$.

We can now formulate precisely the expected form of the coarse relative trace formula for Galois symmetric pairs.

\begin{proposition}[Coarse RTF for Galois Symmetric Spaces]
    Let $(G, G')$ be a Galois symmetric pair of equal split rank, and let $X$ be the associated Galois symmetric space.
    
    Let $f \in \Cc(X(\A))$ and let $\{ \Phi_1^{\xi} \}$ be the associated collection of test functions in $\Cc(G(\A)^1)$ (see \cref{assigning-test-fns}).
    
    Assume that for $T \in \a_{0'}$ sufficiently regular, $$\sum_{\o \in \O^X} \int_{[G']^1} |k^T_{f, \o}(x)|\,dx < \infty \text{ and } \sum_{\xi \in \ker^1(F, X)} \sum_{\chi \in \mathcal{X}^G} \int_{[G']^1} \int_{[G_\xi]^1} |k^T_{\Phi^{\xi}_1, \xi, \chi}(x, y)|\,dy\,dx < \infty.$$ Then we have the following equality of distributions $$J^T_{geom}(f) = \sum_{\zeta \in \im \alpha} \sum_{\xi \in \alpha^{-1}(\zeta)} J^T_{spec,\,\xi}(\Phi^{\xi}_1) = \sum_{\xi \in \ker^1(F, X)} J^T_{spec,\,\xi}(\Phi^{\xi}_1).$$
\end{proposition}
\begin{proof}
    This is an entirely formal calculation utilizing the definitions above and \cref{basic-compat}.
\end{proof}

Note that all but finitely many contributions of the $\zeta \in \im \alpha$ on the right-hand side are zero, and for each fixed $\zeta \in \im \alpha$, the sum over $\alpha^{-1}(\zeta)$ is finite by \cref{fibers-of-alpha-are-finite}. Thus, only finitely many $\xi$ contribute on the right-hand side.

\part{The Galois Symmetric Pair for $\SL_2$}\label{part-two}

The remainder of this paper is devoted to establishing the fine geometric RTF for the Galois symmetric pair $(\Res_{E/F} \SL_{2, E}, \SL_{2, F})$.

Let us fix some notation. Set $G = \Res_{E/F} \SL_{2, E}$ and $G' = \SL_{2, F}$. Let $A_0' = A_0$ be the standard split maximal torus in $G$, which is manifestly $\theta$-stable. Fix $P_0 = B$ the standard upper-triangular Borel subgroup in $G$, and let $P_0' = B'$ be the standard upper-triangular Borel in $G'$. These will be our fixed minimal parabolic subgroups.

In this setting, the Galois cohomological complications discussed above do not enter into the picture; $\ker^1(F, X)$ and $\ker^1(\A, X)$ are both trivial, and the non-trivial Levi subgroups of $G'$ (and $G$) have trivial Galois cohomology (they are quasi-split tori).

We write $M = M_B$ for the standard Levi subgroup of $B$, and $M' = M_{B'}$ for the standard Levi subgroup of $B'$.

Note that the Weyl groups of $G$ and $G'$ agree. Fix Weyl group representatives $1, w = \mat{0}{1}{-1}{0} \in G'(F)$.

Finally, note that the Harish-Chandra maps for $G$ and $G'$ are related by $H_{0}(x) = 2H_{0'}(x)$ for $x \in G'(\A)$. This is a consequence of the fact that $|a|_{\A_E} = |a|_{\A_F}^2$ for all $a \in \A^{\x}_F$.

At various points in the proof we will use the properties of certain explicit indicator function; these have been written out in \cref{explicit-trunc-calcs}.

\section{Convergence of the Truncated Geometric RTF}

Our goal in this section is to prove the following

\begin{theorem}\label{geom-cvg}
    Let $\o \in \O^X$ be a fixed geometric datum, and let $f \in \Cc(X(\A))$. Then for $T \in \a_{0'}$ sufficiently large, $$\int_{[G']} |k^T_{f, \o}(x)|\,dx < \infty.$$ Thus, $J_\o^T(f)$ is well-defined for $T \in \a_{0'}$ sufficiently regular.
\end{theorem}

That this theorem implies the convergence of $J_{geom}^T(f)$ is explained in the paragraph following \cref{refined-kernel-for-sl2}.

We begin by defining the notion of geometric datum and the refined geometric kernels $\{ k_{f, P, \o} \}_{P \in \F^G(B')}$. We then introduce a version of the Cayley transform adapted to our situation, and describe its basic properties.

The proof begins in earnest in \cref{geom-cvg-proof}, and breaks down into three steps. It follows the basic outline put forward by Arthur \cite{arthur-1, arthur-intro}, adapted to the Galois symmetric space setting. See also \cite{coarse}.

The first step is to use a partition of unity to rewrite $k^T_{f, \o}(x)$ as a sum of indicator functions multiplied by signed sums of parabolic kernels. The basic idea is to obtain cancellation within these signed sums, and thereby control the integral.

The next step introduces a geometric lemma, a variant of the Levi decomposition for $X$ that we call a Levi retraction, which in combination with an analysis of the relevant indicator functions, is used to show that the corresponding signed sum of parabolic kernels can be rewritten in terms of the Fourier transform of an auxiliary function on a line bundle above the Levi Galois symmetric space.

In the final step, the cancellation that was sought is achieved by applying the Poisson summation formula.

\subsection{Invariant Theory, Geometric Data}\label{geometric-data}

In this section, following the standard conventions in algebraic geometry we write $\A^1$ to denote the affine line, i.e. $\A^1 = \Spec F[T]$.

If we fix a presentation $E = F(\tau^{1/2})$ for $\tau^{1/2} \in E$ an element of trace zero, we obtain using \cref{concrete-description-of-symm-spc} the following explicit presentation for the Galois symmetric space $$X = \{ \Mat{a}{b\tau^{1/2}}{c\tau^{1/2}}{\overline{a}} \in G : a \in E, b, c \in F, a\overline{a} - bc\tau = 1 \}.$$

We now recall some basic definitions from geometric invariant theory.

\begin{definition}
    Fix $\eta \in X(F)$.
    \begin{itemize}
        \item We say $\eta$ is regular if the dimension of its centralizer under the $G'$-action is minimal.
        \item We say $\eta$ is semisimple if the $G'$-orbit through $\eta$ is closed.
    \end{itemize}
\end{definition}

Write $X^{rss}$ for set the $F$-rational points of $X$ which are regular semisimple.

\begin{proposition}
    The map $\chi : X \to \A^1$ given by $$\eta \mapsto \frac{1}{2}\tr(\eta)$$ realizes $\A^1$ as the GIT quotient of $X$ by $G'$. It is surjective on $F$-points.
\end{proposition}
\begin{proof}
    Recall that by constuction, the underlying ring of the GIT quotient is obtained as the equalizer of the standard inclusion $\O(X) \to \O(G') \otimes_F \O(X)$ (dual to the projection map) and the action map $\O(X) \to \O(G') \otimes_F \O(X)$. Thus, the formation of GIT quotients commutes with arbitrary field extension. After base change to $\overline{F}$, by \cref{base-change-splits}, we can identify $X_{\overline{F}} \simeq G'_{\overline{F}}$, in such a way that the $G'$-action is identified with conjugation and the map above is the trace map. But now the result is well-known.
\end{proof}

We say a point $a \in \A^1(F)$ is regular (resp. semisimple) if all the $F$-rational points in $\chi^{-1}(a)$ are regular (resp. semisimple). In particular, note that for all regular semisimple $a \in \A^1(F)$, the fiber above $a$ is a single $G'(\overline{F})$-orbit. We denote the set of $F$-rational points in $\A^1$ which are regular semisimple points by $\A^{1, rss}$.

It will often be useful to identify $a \in \A^1(F)$ with the polynomial $p_a(T) = T^2 - 2aT + 1 \in F[T]$.

\begin{definition}[Geometric Data]
    Define the collection of geometric data, written $\O^{X}$, to be the collection of $F$-points of closed subschemes of $X$ obtained as fibers of the map $\chi : X \to \A^1$ above $F$-rational points.
\end{definition}

If $\o \in \O^X$ is a geometric datum, we write $p_{\o}(T) \in F[T]$ for the polynomial $p_{\chi(\o)}$.

One can show that $\A^{1, rss} = F \setminus \{ \pm 1 \}$. Indeed, the discriminant of $p_a(T)$ is given by $4a^2 - 4$. A straight-forward calculation shows that above $\pm 1 \in \A^1(F)$, the fiber is naturally identified with the nilpotent cone of the Lie algebra $\mathfrak{sl}_{2, F}$. Thus, over $\overline{F}$, the fiber above $\pm 1$ is a union of two $G'(\overline{F})$-orbits; the zero orbit, and the principal orbit. We will make this more precise later, when we come to the fine geometric expansion.

\subsection{Refined Geometric Kernels}

\begin{definition}\label{refined-kernel-for-sl2}
    Let $\o \in \O^X$ be a geometric datum. Let $f \in \Cc(X(\A))$. For each standard parabolic subgroup $P = MN$ in $G$, define $$k_{f, P, \o}(x) \coloneqq \sum_{\eta \in X_M \cap \o} \int_{N_P(\A)/N_{P'}(\A)} f(x^{-1} \gamma n \theta(n)^{-1} \theta(\gamma)^{-1} x)\,dn,$$ where $\gamma \in M_P(F)$ is an element such that $\eta = \gamma\theta(\gamma)^{-1}$.
\end{definition}

We have chosen to incorporate the simplifications which occur in the $(\Res_{E/F} \SL_{2, E}, \SL_{2, F})$ case into the definition above.

Note that \cref{geom-cvg} implies $$\sum_{\o \in \O^X} \int_{[G']} |k^T_{f, \o}(x)|\,dx < \infty.$$ Indeed since $f \in \Cc(X(\A))$ is compactly supported, $\chi(\supp f) \subset (X/\!/G')(\A)$ is compact, and thus meets the discrete set $(X/\!/G')(F) \subset (X/\!/G')(\A)$ in finitely many rational points. Thus, for all but finitely many $\o \in \O^X$, the kernel $k_{f, G, \o}$ is identically zero. A similar argument shows that for all but finitely many $\o \in \O^X$, all kernels $\{ k_{f, P, \o} \}$ are zero. Thus $k^T_{f, \o}$ is zero for all but finitely many geometric datum $\o \in \O^X$.

\subsection{The Cayley Transform}\label{cayley-section}

Let $\s \coloneqq \{ X \in \mathfrak{sl}_{2, E} : X + \overline{X}  = 0 \}$ be the normal slice to $1 \in X$. It admits a natural adjoint action by $G'$. For $E = F(\tau^{1/2})$, we can describe $\s$ explicitly as $$\s = \{ \tau^{1/2}\mat{a}{b}{c}{-a} \in \mathfrak{sl}_{2, E} : a, b, c \in F \}.$$

Write $\m_B \coloneqq \Lie(M_B)$, $\n_B \coloneqq \Lie(N_B)$. For $\nu \in E^{\x}$, define $$D_{\nu} \coloneqq \{ X \in \mathfrak{s} : \det(1 - \nu X) = 0 \} \subset \mathfrak{s}.$$ Similarly, define $$D_{\tau^{1/2}}' \coloneqq \{ X \in \mathfrak{sl}_{2, F} : \det(1 - \tau^{1/2} X) = 0 \} \subset \mathfrak{sl}_{2, F}.$$

Let $N_B^{opp} \subset N_B$ denote the subgroup which is sent to its negation under the Galois involution. It is clear that we can identify $N_B^{opp}(\A)$ with $N_B(\A)/N_{B'}(\A)$, and we frequently do so without further comment.

\begin{definition}
     For $\varepsilon = \pm 1$, the Cayley transform $\kappa_{\varepsilon} : \s \setminus D_1 \to G$ is given by $$\kappa_{\varepsilon}(Y) \coloneqq -\varepsilon(1 + Y)(1 - Y)^{-1}.$$
\end{definition}

\begin{proposition} For $\varepsilon = \pm 1$, we have:
    \begin{enumerate}
        \item The map $\kappa_{\varepsilon}$ is well-defined and factors through $X \subset G$.
        \item The map $\kappa_{\varepsilon}$ induces an isomorphism $\kappa_{\varepsilon} : \s \setminus D_1 \to X \setminus D_{\varepsilon}$, with inverse $$ \kappa_{\varepsilon}^{-1} : g \mapsto -(\varepsilon + g)(\varepsilon - g)^{-1}.$$ Here $D_{\varepsilon} = \{ g \in X : \det(\varepsilon - g) = 0 \} \subset X$.
        \item The diagram below commutes:
        \begin{center}
            \begin{tikzcd}
            \s \setminus D_1 \arrow[r, "\kappa_{\varepsilon}"] \arrow[d, "-\det"'] & X \setminus D_{\varepsilon} \arrow[d, "\chi"] \\
            \A^1_F \setminus \{ 1 \} \arrow[r]                                & \A^1_F \setminus \{ \varepsilon \}
            \end{tikzcd}
        \end{center}
        The bottom map is the Cayley transform given by the same formula as $\kappa_{\varepsilon}$, except $Y$ is interpreted as an element of $\A^1_F$.
    \end{enumerate}
\end{proposition}
\begin{proof}
    Fix $Y = \tau^{1/2}\mat{a}{b}{c}{-a} \in \mathfrak{s} \setminus D_1$. The condition that $\det(1 - Y) \neq 0$ translates to $\tau (a^2 +  bc) \neq 1$. We calculate $$\kappa_{\varepsilon}(Y) = \frac{-\varepsilon}{1 - \tau a^2 - \tau bc}\Mat{(1 + \tau^{1/2}a)^2 + \tau bc}{2\tau^{1/2} b}{2\tau^{1/2} c}{(1 - \tau^{1/2}a)^2 + \tau bc}.$$ A further calculation reveals $\det(\kappa_\varepsilon(Y)) = \varepsilon^2 = 1$. Thus, $\kappa_\varepsilon$ is well-defined as a map into $G$. Based on our description of $X$ in coordinates, we see immediately that this factors through $X \subset G$. This proves (1).

    The proof of (2) is a routine calculation we leave to the reader.

    A further calculation shows that $$\chi(\kappa_\varepsilon(Y)) = -\varepsilon \frac{1 + \tau(a^2 + bc)}{1 - \tau(a^2 + bc)}.$$ Since $-\det(Y) = \tau(a^2 + bc)$, this verifies (4).    
\end{proof}

\begin{lemma}\label{cayley-is-equivariant}
    For $Y \in (X \setminus D_{\varepsilon})(\A)$ and $x \in G'(\A)$, $$g\kappa_{\varepsilon}(Y)g^{-1} = \kappa_{\varepsilon}(\Ad(g)Y).$$
\end{lemma}
\begin{proof}
    By definition, $\kappa_{\varepsilon}(Y) = -\varepsilon(1 + Y)(1 - Y)^{-1}.$ The claim is immediate.
\end{proof}

\begin{proposition}\label{cayley-map-properties}
    Let $\varepsilon \in \{ \pm 1 \}$, $\eta \in (X_M \setminus D_{\varepsilon})(F)$. Set $\tau^{1/2} \zeta \coloneqq \kappa_{\varepsilon}^{-1}(\eta)$. Finally, choose $\gamma \in M_B(F)$ such that $\eta = \gamma\theta(\gamma)^{-1}$.

    \begin{enumerate}
        \item $\zeta \in (\m_{B'} \setminus D_{\tau^{1/2}}')(F)$.
        \item For all $n \in N_B(\A)$, $\gamma n \theta(n)^{-1} \theta(\gamma)^{-1} \in (X \setminus D_{\varepsilon})(\A)$.
        \item For all $n \in N_B(\A)$, there exists a unique $N \in \n_{B'}(\A)$ such that $$\kappa_{\varepsilon}^{-1}(\gamma n \theta(n)^{-1} \theta(\gamma)^{-1}) = \tau^{1/2}(\zeta + N).$$
        \item The map $n \mapsto N$ above induces a homeomorphism $N_B(\A) / N_{B'}(\A) \to \n_{B'}(\A)$ which preserves Haar measure. Thus, for all $\Phi \in \Cc(X(\A))$, $$\int_{N_B(\A) / N_{B'}(\A)} \Phi(\gamma n \theta(n)^{-1} \theta(\gamma)^{-1})\,dn = \int_{\n_{B'}(\A)} \Phi(\kappa_{\varepsilon}(\tau^{1/2}(\zeta + N)))\,dN$$ and the integral on the left is independent of the choice of $\gamma\in M_B(F)$.
    \end{enumerate}
\end{proposition}

\begin{proof}
    From our explicit description of points in $X$, we can write $\eta = \mat{x}{}{}{\overline{x}}$ for $x \in E^{\x}$ and $x\overline{x} = 1$. Since $\eta \not\in D_{\varepsilon}$, $\det(\varepsilon - \eta) \neq 0$. Thus, $\kappa_{\varepsilon}^{-1}$ is well-defined, and we calculate $$\kappa_{\varepsilon}^{-1}(\eta) = -(\varepsilon + \eta)(\varepsilon - \eta)^{-1} = -\Mat{\frac{\varepsilon + x}{\varepsilon - x}}{}{}{\frac{\varepsilon + \overline{x}}{\varepsilon - \overline{x}}}.$$ We calculate the trace: $$-\left(\frac{\varepsilon + x}{\varepsilon - x} + \frac{\varepsilon + \overline{x}}{\varepsilon - \overline{x}}\right) = -\frac{(\varepsilon^2 - \varepsilon\overline{x} + \varepsilon x - 1) + (\varepsilon^2 + \varepsilon \overline{x} - \varepsilon x - 1)}{(\varepsilon - x)(\varepsilon + \overline{x})} = 0.$$ This matrix is clearly sent to its negation under the Galois involution, thus $\zeta = \frac{1}{\tau^{1/2}}\kappa_{\varepsilon}^{-1}(\eta)$ is preserved by the Galois involution, and lies in $\mathfrak{m}_{B'}(F)$. Since $\kappa_{\varepsilon}^{-1}(\eta) \not\in D_1$, $\zeta \not\in D_{\tau^{1/2}}'$. This proves (1).

    Let $y \in E^{\x}$ such that $x = y/\overline{y}$, and set $\gamma = \mat{y}{}{}{y^{-1}}$. Clearly, $\gamma n \theta(n)^{-1}\theta(\gamma)^{-1} \in X(\A)$. To check that this does not lie in $D_\varepsilon$, we must show that $\det(\varepsilon - \gamma n \theta(n)^{-1}\theta(\gamma)^{-1}) \neq 0$. Write $n = \mat{1}{a}{}{1}$ for some $a \in \A_E$. Then we calculate $$\gamma n \theta(n)^{-1}\theta(\gamma)^{-1} = \Mat{x}{y\overline{y}(a - \overline{a})}{}{\overline{x}}.$$ It follows that $0 \neq \det(\varepsilon - \eta) = \det(\varepsilon - \gamma n \theta(n)^{-1}\theta(\gamma)^{-1})$, as desired. This proves (2).

    From (1) and (2), the quantity $\kappa_{\varepsilon}^{-1}(\gamma n \theta(n)^{-1}\theta(\gamma)^{-1}) \in \mathfrak{s} \setminus D_1$ is well-defined. We are now searching for $N \in \mathfrak{n}_{B'}(\A)$ such that $$\kappa_{\varepsilon}^{-1}(\gamma n \theta(n)^{-1}\theta(\gamma)^{-1}) = \tau^{1/2}(\zeta + N).$$ Since $\kappa_{\varepsilon}^{-1}$ is invertible on this locus, if such an $N \in \mathfrak{n}_{B'}(\A)$ exists, it is unique. Without loss of generality, we may assume $n \in N_B^{opp}(\A)$. Thus, write $n = \mat{1}{\tau^{1/2} a/2y\overline{y}}{}{1}$ for some $a \in \A_F$. Then we calculate $$\gamma n \theta(n)^{-1}\theta(\gamma)^{-1} = \eta \Mat{1}{\tau^{1/2} a/x}{}{1}.$$ Now using our formula for $\kappa_{\varepsilon}^{-1}$, we calculate \begin{multline*}
        \kappa_{\varepsilon}^{-1}(\eta \Mat{1}{\tau^{1/2} a/x}{}{1}) = -(\varepsilon + \eta \Mat{1}{\tau^{1/2} a/x}{}{1})(\varepsilon - \eta \Mat{1}{\tau^{1/2} a/x}{}{1})^{-1} \\ = -\Mat{\varepsilon + x}{\tau^{1/2} a}{}{\varepsilon + \overline{x}}\Mat{\varepsilon - x}{-\tau^{1/2} a}{}{\varepsilon - \overline{x}}^{-1} = -\Mat{\varepsilon + x}{\tau^{1/2} a}{}{\varepsilon + \overline{x}}\Mat{\frac{1}{\varepsilon - x}}{\frac{\tau^{1/2} a}{(\varepsilon - x)(\varepsilon - \overline{x})}}{}{\frac{1}{\varepsilon - \overline{x}}}.
    \end{multline*}
    Finally, let us factor $$\Mat{\varepsilon + x}{\tau^{1/2} a}{}{\varepsilon + \overline{x}} = \Mat{\varepsilon + x}{}{}{\varepsilon + \overline{x}} + \Mat{0}{\tau^{1/2} a}{}{0},$$ and
    $$\Mat{\frac{1}{\varepsilon - x}}{\frac{\tau^{1/2} a}{(\varepsilon - x)(\varepsilon - \overline{x})}}{}{\frac{1}{\varepsilon - \overline{x}}} = \Mat{\frac{1}{\varepsilon - x}}{}{}{\frac{1}{\varepsilon - \overline{x}}} + \Mat{0}{\frac{\tau^{1/2} a}{(\varepsilon - x)(\varepsilon - \overline{x})}}{}{0}.$$
    Then we obtain $$\kappa_{\varepsilon}^{-1}(\eta \Mat{1}{\tau^{1/2} a/x}{}{1}) = \kappa_{\varepsilon}^{-1}(\eta) - \tau^{1/2} \Mat{0}{\frac{a(\varepsilon + x)}{(\varepsilon - x)(\varepsilon - \overline{x})} + \frac{a}{\varepsilon - \overline{x}}}{}{0} = \tau^{1/2}(\zeta + N),$$ where $$N \coloneqq \Mat{0}{\frac{-2a\varepsilon}{(\varepsilon - x)(\varepsilon - \overline{x})}}{}{0}.$$ This completes the proof of (3). The final claim is a straightforward consequence of the explicit calculations performed above. The point is simply that $N$ depends \emph{linearly} on the coordinate $a$ parameterizing $n$.
\end{proof}

\subsection{Proof of \Cref{geom-cvg}}\label{geom-cvg-proof}

\subsubsection{Step 1: Partition of Unity}\label{partition-of-unity}

We now begin the proof of \cref{geom-cvg}. The first step is to apply a partition of unity result to decompose the regularized geometric kernel.

This partition of unity utilizes a family of indicator functions $\{ F^P \}_{P \in \mathcal{F}^G(B')}$ on $\a_{0'}$ whose definition is unfortunately quite technical. To give some flavour, when $P = G$, $F^G(x, T)$ is an indicator function on $x \in G'(\A)$ which is one when $x$ lies in a $G'(F)$-translate of a compact set depending on the truncation parameter $T$. We refer the reader to \cite[\S3.5, Paragraph 2]{zydor} for the definition.

We also make use of a family of indicator functions $\{ \sigma^{P_2}_{P_1} \}$ for various parabolic subgroups $P_1, P_2$. These are defined and computed explicitly in \cref{sigma-calculation} of the appendix.

\begin{proposition}[{\cite[Prop. 3.5]{coarse}}]
    For all $x \in G'(\A)$ and all $T \in \a_{0'}$ sufficiently regular, and all $Q \in \F^G(B')$, we have $$\sum_{\substack{P \in \F^G(B')\\P \subset Q}} \sum_{\delta \in P'(F) \backslash Q'(F)}F^P(\delta x, T) \tau_P^Q(H_{0'}(\delta x)_P - T_P) = 1.$$
\end{proposition}

Recall that $$k^T_{f, \o}(x) \coloneqq \sum_{P \in \F^G(B')} \varepsilon_P^G \sum_{\delta \in P'(F) \backslash G'(F)} \widehat{\tau}_P(H_{0'}(\delta x)_P - T_P) k_{f, P, \o}(\delta x).$$

Applying the above identity (for $\delta x$) in the definition of $k^T_{f, \o}$ we obtain
\begin{multline*}
    k^T_{f, \o}(x) = \sum_{P \in \F^G(B')} \varepsilon_P^G \\\sum_{\delta \in P'(F) \backslash G'(F)} \sum_{P_1 \subset P} \sum_{\delta_1 \in P_1'(F) \backslash P'(F)} F^{P_1}(\delta_1\delta x, T)\tau_{P_1}^P(H_{0'}(\delta_1 \delta x)_{P_1} - T_{P_1})\widehat{\tau}_P(H_{0'}(\delta x)_P - T_P) k_{f, P, \o}(\delta x).
\end{multline*}

Now, note that $k_{f, P, \o}(\delta_1\delta x) = k_{f, P, \o}(\delta x)$ and $\widehat{\tau}_P(H_{0'}(\delta_1\delta x) - T_P) = \widehat{\tau}_P(H_{0'}(\delta x) - T_P)$. Thus, we can fold the sum along $P_1'(F) \subset P'(F) \subset G'(F)$ to obtain
\begin{multline*}
    k^T_{f, \o}(x) = \sum_{P \in \F^G(B')} \varepsilon_P^G \\ \sum_{P_1 \subset P} \sum_{\delta \in P_1'(F) \backslash G'(F)} F^{P_1}(\delta x, T)\tau_{P_1}^P(H_{0'}(\delta x)_{P_1} - T_{P_1})\widehat{\tau}_P(H_{0'}(\delta x)_P - T_P) k_{f, P, \o}(\delta x).
\end{multline*}

We now use the contraction relation (\cite[Lemma 3.3, (4)]{coarse}) $\tau_{P_1}^P\widehat{\tau}_P = \sum_{\substack{P_2 \in \F^G(B')\\P_2 \supset P}} \sigma_{P_1}^{P_2}$ to write this as 
\begin{multline*}
    k^T_{f, \o}(x) = \sum_{P \in \F^G(B')} \varepsilon_P^G \\ \sum_{P_1 \subset P} \sum_{\delta \in P_1'(F) \backslash G'(F)} F^{P_1}(\delta x, T)\left(\sum_{\substack{P_2 \in \F^G(B')\\P_2 \supset P}} \sigma_{P_1}^{P_2}(H_{0'}(\delta x)_{P_1} - T_{P_1})\right) k_{f, P, \o}(\delta x).
\end{multline*}

Finally, we can collect the terms which depend explicitly on $P$ in an inner-most sum, to obtain
\begin{multline*}
    k^T_{f, \o}(x) = \sum_{P_1 \subset P_2} \sum_{\delta \in P_1'(F) \backslash G'(F)} F^{P_1}(\delta x, T)\sigma_{P_1}^{P_2}(H_{0'}(\delta x)_{P_1} - T_{P_1}) \left[\sum_{P_1 \subset P \subset P_2} \varepsilon_P^G k_{f, P, \o}(\delta x) \right].
\end{multline*}

Now define $$i^T_{P_1, P_2}(x) \coloneqq F^{P_1}(x, T)\sigma_{P_1}^{P_2}(H_{0'}(x)_{P_1} - T_{P_1})$$ and set $$k_{f, P_1, P_2, \o}(x) \coloneqq \sum_{P_1 \subset P \subset P_2} \varepsilon_P^G k_{f, P, \o}(x).$$

The upshot of these manipulations is that $i^T_{P_1, P_2}$ is an indicator function (i.e. it takes on values 0 and 1), so $$|k^T_{f, \o}(x)| \leq \sum_{P_1 \subset P_2} \sum_{\delta \in P_1'(F) \backslash G'(F)} i_{P_1, P_2}^T(\delta x)|k_{P_1, P_2, \o}(\delta x)|,$$ and thus to prove \cref{geom-cvg} it suffices to prove that for each pair $P_1 \subset P_2$   
$$\int_{P_1'(F) \backslash G'(\A)} i_{P_1, P_2}^T(x)|k_{f, P_1, P_2, \o}(x)|\,dx < \infty.$$

\begin{remark}
    Of course, in our setting there are only two standard parabolics, so the manipulations above could be (slightly) simplified, however, this gives less insight into the mechanics of truncation. Zydor's partition of unity applies very broadly, and we expect that it will feature prominently in any proof of convergence of the geometric RTF.
\end{remark}

\subsubsection{Step 2: Retract onto Levi Galois Symmetric Spaces}

In our setting, the only pairs $P_1 \subset P_2$ for which $\sigma_{P_1}^{P_2}$ is non-zero are $P_1 = G, P_2 = G$ and $P_1 = B, P_2 = G$. In the first case, $i^T_{G, G}(x) = F^G(x, T)$ is compactly supported. Thus, 
$$\int_{G'(F) \backslash G'(\A)} i_{G, G}^T(x)|k_{f, G, G, \o}(x)|\,dx < \infty.$$

The remainder of the proof will focus on the other pair $P_1 = B, P_2 = G$. In this case, the function $k_{f, B, G, \mathfrak{o}}$ decomposes as \begin{align*}
    k_{f, B, G, \mathfrak{o}}(x) &= k_{f, G, \mathfrak{o}}(x) - k_{f, B, \mathfrak{o}}(x)\\
    &= \sum_{\eta_1 \in \o} f(x^{-1}\eta_1 x) - \sum_{\eta_2 \in X_M(F) \cap \o} \int_{N_B(\A)/N_{B'}(\A)} f(x^{-1}\gamma n \theta(n)^{-1} \theta(\gamma)^{-1} x)\,dn,
\end{align*}
where $\gamma \in M_B(F)$ is an element such that $\eta_2 = \gamma\theta(\gamma)^{-1}.$

The following lemma shows that when the truncation parameter $T$ is sufficiently large, the first sum can be taken over $\eta \in \o \cap B(F)$.

\begin{proposition}
    There exists a positive constant $C$ which depends only on $f$, such that for all truncation parameters $T > C$, if $x \in B'(F)\backslash G'(\A)$ is such that $i_{B, G}^T(x) \neq 0$, then $$k_{f, B, G, \mathfrak{o}}(x) = \sum_{\eta_1 \in B(F) \cap \mathfrak{o}} f(x^{-1}\eta_1 x) - \sum_{\eta_2 \in X_M(F) \cap \mathfrak{o}} \int_{N_B(\A)/N_{B'}(\A)} f(x^{-1}\gamma n \theta(n)^{-1} \theta(\gamma)^{-1})\,dn,$$ where $\gamma \in M_B(F)$ is such that $\eta_2 = \gamma\theta(\gamma)^{-1}$.
\end{proposition}
\begin{proof}
    The second term on the RHS is precisely $-k_{f, B, \mathfrak{o}}(x)$, so it suffices to show that $k_{f, G, \mathfrak{o}}(x)$ and the first term on the RHS agree. Now, consider $\eta = \eta_1 \in X(F) \cap \mathfrak{o}$ that does not lie in $B(F)$, so its lower-left entry is non-zero, say $\tau^{1/2} y$ for $y \in F$. Write $x = nmak$ relative to the Iwasawa decomposition, where $n \in N_{B'}(\A)$, $a \in A_{B'}^{\infty}$, $m \in M_{B'}(\A)^1$ and $k \in K'$. Then we compute that $x^{-1}\eta x = k^{-1}a^{-1} m^{-1} n^{-1} \eta n m a k$ has the form $$\Mat{*}{*}{\tau^{1/2}y \alpha_1(m) \alpha_1(a)}{*},$$ where $\alpha_1$ is the positive root for $G'$. Since $y$ is non-zero, this has adelic absolute value $|\tau^{1/2}y\alpha_1(m)\alpha_1(a)|_{\A} = e^{\alpha_1(H_{0}(a))}$. 

    Now since the support of $f$ is compact, there exists a positive constant $C_f$ such that when you project the support of $f$ onto the lower-left coordinate, its adelic absolute value is strictly bounded above by $C_f$. But by choosing the truncation parameter sufficiently large ($T > C \coloneqq 2C_f^{1/2}$ suffices) our condition $i_{B, G}^T(x) = 1$ forces $\sigma_B^G(H_{0'}(a) - T) = 1$, and so the torus component $a$ of $x$ has $H_{0}(a) > T/2$, and thus $e^{\alpha_1(H_{0}(a))} \gg T^2/4 > C_f$. Thus, $f(x^{-1}\eta x)$ must be zero. The final observation is that the lower bound on $T$ required to run this argument depends only on $f$, and not $\eta$.
\end{proof}

In turn, the set $B(F) \cap \o$ naturally retracts onto the Levi Galois symmetric space $X_M$.

\begin{lemma}\label{retraction-lemma}
    Fix $\eta \in B(F) \cap \o$.
    \begin{enumerate}
        \item The element $\eta$ can be decomposed uniquely into $\eta = \eta_{M_B} \eta_{N_B}$, where $\eta_{M_B} \in X_M(F) \cap \o$ and $\eta_{N_B} \in N_B(F)$.
        \item Moreover, if we fix $\gamma \in M_B(F)$ such that $\eta_{M_B} = \gamma\theta(\gamma)^{-1}$, there exists a unique $n_1 \in N_B^{opp}(F)$ such that $\gamma n_1$ is a representative for $\eta$, that is $\eta = \gamma n_1 \theta(\gamma n_1)^{-1}$.
    \end{enumerate}
\end{lemma}
\begin{proof}
    Let $\lambda \in X_*(A_0') = X_*(A_0)$ be a generic cocharacter, such that $B = P_G(\lambda)$. Then $M_B = Z_G(\lambda)$ and $N_B = \{ g \in G : \lim_{t \to 0} \lambda(t)g\lambda(t)^{-1} = 1 \}$.

    Since $\eta \in B(F)$, we have a Levi decomposition $\eta = \eta_{M_B}\eta_{N_B}$. Since $\eta_{M_B} = \lim_{t \to 0}\lambda(t)\eta\lambda(t)^{-1},$ we see that $$\theta(\eta_{M_B}) = \lim_{t \to 0}\lambda(t)\theta(\eta)\lambda(t)^{-1} = \lim_{t \to 0}\lambda(t)\eta^{-1}\lambda(t)^{-1} = \eta_{M_B}^{-1},$$ and thus $\eta_{M_B} \in X_M(F)$. Since $\chi : X \to \A^1$ is $G'$-conjugation invariant, we have $$\chi(\eta_{M_B}) = \lim_{t \to 0} \chi(\lambda(t)\eta\lambda(t)^{-1}) = \chi(\eta),$$ and thus $\eta_{M_B} \in \o$. This proves (1).

    To see the proof of (2), using our explicit coordinates on $X$, we can write $\eta = \mat{x}{\tau^{1/2} a}{}{\overline{x}}$ where $x \in E^{\x}$ such that $x\overline{x} = 1$. Since $x$ has unit norm, we can write $x = y/\overline{y}$ for $y \in E^{\x}$. Then $\eta_{M_B} = \mat{x}{}{}{\overline{x}}$ and one choice for $\gamma$ is $\gamma = \mat{y}{}{}{y^{-1}}$.
            
    Now define $$n_1 \coloneqq \Mat{1}{\tau^{1/2} a/2y\overline{y}}{}{1}.$$ We easily compute $$n_1\theta(n_1)^{-1} = \Mat{1}{\tau^{1/2} a/y\overline{y}}{}{1}.$$ Moreover, a short computation using the identity $x\overline{y}^2 = y\overline{y}$ shows $\gamma n_1 \theta(\gamma n_1)^{-1} = \eta$. Thus, $\gamma n_1$ is a representative for $\eta$, as desired. Finally, notice that $n_1 \in N_B^{opp}(F)$. This completes the proof of (2).
\end{proof}

As an immediate consequence of the above lemma, we see that when the truncation parameter is large relative to our test function $f$, we can rewrite the difference of parabolic kernels as \begin{multline}\label{pre-fourier}
    \sum_{\eta \in X_M(F) \cap \o} \left[ \sum_{n_1 \in N_B^{opp}(F)} f(x^{-1} \gamma n_1 \theta(n_1)^{-1} \theta(\gamma)^{-1} x) \right] \\- \int_{N_B^{opp}(\A)} f(x^{-1} \gamma n\theta(n)^{-1} \theta(\gamma)^{-1} x)\,dn,
\end{multline} where $\gamma \in M_B(F)$ is such that $\eta = \gamma\theta(\gamma)^{-1}$ and we have identified $N_B/N_{B'}$ with $N_B^{opp}$.

\subsubsection{Step 3: Fourier Analysis}

Now, for $x \in G'(\A)$, define $f_x(y) \coloneqq f(x^{-1}yx)$, and for each $\eta \in X_M(F) \cap \o$, set $\tau^{1/2} \zeta \coloneqq \kappa_\varepsilon^{-1}(\eta)$. Here we choose $\varepsilon = 1$ or $-1$ depending on $\chi(\o)$. If $\chi(\o) \neq \pm 1$, the choice is immaterial, and our convention is to choose $\varepsilon = 1$. If $\chi(\o) = 1$, choose $\varepsilon = -1$, and vice versa if $\chi(\o) = -1$.

For the remainder of this section, we take the truncation parameter sufficiently large, so that we can apply the results of the previous section.

Then we can use the Cayley transform discussed in \cref{cayley-section} to rewrite \cref{pre-fourier} as
\begin{multline}
    k_{f, B, G, \mathfrak{o}}(x) = \sum_{\eta \in X_M \cap \mathfrak{o}} \sum_{N_1 \in \mathfrak{n}_{B'}(F)} f_x(\kappa_{\varepsilon}(\tau^{1/2}(\zeta + N_1)))\\ - \int_{\mathfrak{n}_{B'}(\A)} f_x(\kappa_{\varepsilon}(\tau^{1/2}(\zeta + N)))\,dN.
\end{multline}

Let $\psi : F \backslash \A \to \C^{\x}$ be a fixed additive character compatible with the Haar measure on $\mathfrak{n}_{B'}(\A)$. Finally, define $g_{x, \eta}(N) \coloneqq f_x(\kappa_{\varepsilon}(\tau^{1/2}(\zeta + N)))$. Since $f_x \in \Cc(X(\A))$, we see that $g_{x, \eta} \in \Cc(\mathfrak{n}_{B'}(\A))$.

By Poisson summation, we have that $$\sum_{N_1 \in \mathfrak{n}_{B'}(F)} g_{x, \eta}(N_1) = \sum_{N_1 \in \mathfrak{n}_{B'}(F)} \widehat{g}_{x, \eta}(N_1).$$ However, $\widehat{g}_{x, \eta}(0) = \int_{\mathfrak{n}_{B'}(\A)} f_x(\kappa_{\varepsilon}(\tau^{1/2}(\zeta + N)))\,dN.$ It follows that \begin{align*}
    k_{f, B, G, \mathfrak{o}}(x) &= \sum_{\eta \in X_M(F) \cap \mathfrak{o}} \sum_{N_1 \neq 0 \in \mathfrak{n}_{B'}(F)} \widehat{g}_{x, \eta}(N_1) \\ &= \sum_{\eta \in X_M(F) \cap \mathfrak{o}} \sum_{N_1 \neq 0 \in \mathfrak{n}_{B'}(F)} \int_{\mathfrak{n}_{B'}(\A)} f_x(\kappa_{\varepsilon}(\tau^{1/2}(\zeta + N)))\psi(\langle N_1, N \rangle)\,dN.
\end{align*}
We have achieved the cancellation that was sought after. Following Arthur, the key point is that the innermost integral is now rapidly decaying in the torus coordinate of $x \in G'(\A)$.

To see this, we now return to the integral, $$\int_{B'(F) \backslash G'(\A)} i_{B, G}^T(x)|k_{f, B, G, \mathfrak{o}}(x)|\,dx.$$ Write $x = namk$ where $n \in [N_{B'}]$, $a \in A_{B'}^\infty$, $m \in [M_{B'}]^1$, $k \in K'$, along with the corresponding change of measure $dx = e^{-2\rho_{B'}(H_{0'}(a))}dn\,da\,dm\,dk$. Then it suffices to bound $$\sum_{\eta \in X_M(F) \cap \mathfrak{o}} \sum_{\xi \neq 0 \in \mathfrak{n}_{B'}(F)} \sup_y \int_{H_{0'}(a) > T} |\widehat{g}_{\eta, y}(\Ad(a)\xi)|\,da,$$ where the supremum is taken over the compact set of $y = a^{-1}namk$ such that $i_{B, G}^T(y) = 1$.

Here we've used \cref{cayley-is-equivariant} along with a change of variables to ensure the measures work out correctly. Since $\Ad(a)$ acts by dilation on $\xi \neq 0$, such an integral converges. The reason is that $\widehat{g}_{\eta, y}(-)$ is \emph{Schwartz}, and thus rapidly decaying at infinity, so the entire sum over non-zero $\xi \in \mathfrak{n}_{B'}(F)$ can be bounded by a multiple of $$ \int_{H_{0'}(a) > T} \sum_{\xi \neq 0} ||\Ad(a)\xi||^{-N}\,da,$$ where $N > 0$ is taken sufficiently large. Here $|| \cdot ||$ is an appropriate norm on the vector space $\mathfrak{n}_{B'}$. Such a norm can be chosen such that $$||\Ad(a)\xi|| \gg ||\xi||e^{2H_{0'}(a)},$$ where the implicit constant depends only on the groups involved. It follows that this integral can be bounded by a zeta value multiplied by an integral of the form \begin{equation}\label{integral-bound}
    \int_{H_{0'}(a) > T} e^{-2NH_{0'}(a)}\,da.
\end{equation} This integral clearly converges. This completes the proof of the convergence of the truncated geometric kernel. \qed

This final convergence argument follows the same steps as \cite[p.~946-947]{arthur-1}.

\section{Evaluating the Constant Term of the Truncated Geometric RTF}

As above, fix a test function $f \in \Cc(X(\A))$. With \cref{geom-cvg} in hand, we'd now like to show that the resulting geometric RTF distribution $J_\o^T(f)$ is linear in the truncation parameter $T \in \a_{0'}$.

\begin{theorem}\label{geom-dist-is-linear-in-T}
    Let $f \in \Cc(X(\A))$ and let $\o$ be a geometric datum. Let $T \in \a_{0'}$ be the truncation parameter.
    
    The geometric RTF distribution $J_\o^T(f)$ is a linear function of $T$, when $T$ is sufficiently large. More precisely, for $T > T'$, where $T'$ is a \textbf{fixed} auxiliary truncation parameter, $$J_\o^T(f) = J^{T'}_\o(f) + (T - T')\left[\sum_{\eta \in X_M \cap \o} J^{M_B}_\eta(f_B)\right],$$
    where the notation is explained in the proof below.
\end{theorem}

The key input is a result of Zydor we call the absorption lemma. 

To state the absorption lemma, we require yet another family of indicator functions, this time on $\a_{0'} \x \a_{0'}$: $$\Gamma_B^G(-, -), \Gamma_G^G(-, -), \text{ and } \Gamma_{B}^B(-, -).$$ These functions have been defined and computed explicitly in \cref{gamma-calc}.

\begin{lemma}[{\cite[Lemma 3.3, (3)]{coarse}}]\label{absorption-lemma}
    For any $H, X \in \a_{0'}$,
    $$\widehat{\tau}_B(H - X) = \sum_{\substack{R \in \F^G(B')\\B \subset R \subset G}} \varepsilon_R^G\widehat{\tau}_B^R(H^R)\Gamma_R^G(H_R, X_R).$$
\end{lemma}

The absorption lemma will allow us to introduce a new auxiliary truncation parameter which will ``absorb the main term'' of $J_\o^T(f)$, leaving behind an error term which is polynomial-exponential (in fact linear) in the truncation parameter.

\subsection{Parabolic Descent}

In fact, we will see that the error term we obtain from the absorption lemma can be expressed in terms of the RTF for the Levi Galois symmetric space $X_M$. In \cref{rtf-for-rk-one-tori}, we specialize the theory from part one to Galois symmetric spaces associated to rank one tori, of which $X_M$ is an example.

\subsubsection{Descent for Test Functions}

\begin{definition}
    Let $f \in \Cc(X(\A))$. Define a new function $f_M \in \Cc(X_M(\A))$ by $$f_{M}(\eta) = \int_{K'} \int_{N_B(\A)/N_{B'}(\A)} f(k^{-1} \gamma n\theta(n)^{-1} \theta(\gamma)^{-1} k) e^{-\rho_B(H_B(\gamma))}\,dn\,dk,$$ where $\gamma \in M_{B}(\A)$ such that $\eta = \gamma\theta(\gamma)^{-1}$.
\end{definition}


Note that every $\eta \in X_M(\A)$ can indeed be written this way, as $\ker^1(\A, X_M)$ vanishes. One easily verifies that the right-hand side is independent of the choice of $\gamma \in M_B(\A)$, and the resulting function is smooth and compactly supported. Moreover, it is clear that if $f$ is factorizable, so is $f_M$.

\subsubsection{Descent for Geometric Data}

Note that the natural $M_{B'}$ action on $X_M$ is trivial, so the corresponding invariant quotient $X_M/\!/M_{B'}$ is isomorphic to $X_M$, and thus the geometric data associated to $X_M$ is simply $\O^{X_M} \coloneqq X_M(F)$, the set of $F$-points of $X_M$.

There is a natural map of sets $\iota_B : \O^{X_M} \to \O^X$ given by $$\iota_B(\eta) \coloneqq \text{ the unique $\o \in \O^X$ such that $\eta \in \o$}.$$ Associated to $\iota_B : \O^{X_M} \to \O^X$ is the following basic finiteness result:

\begin{lemma}\label{iota-finite-fibers}
    The fibers of $\iota_B : \O^{X_M} \to \O^X$ are finite of order $0, 1$ or $2$.
\end{lemma}
\begin{proof}
    Fix $\o \in \O^X$. If $X_M(F) \cap \o$ is empty, $\iota_B^{-1}(\o)$ is empty. Otherwise, let $\eta_i = \mat{x_i}{}{}{\overline{x_i}} \in X_M(F) \cap \o$ be two elements in this intersection, where $x_i\overline{x}_i = 1$, $i = 1, 2$. If $x_1 \neq \overline{x_1}$ are distinct, then the common characteristic polynomial $p_\o(T)$ of $\eta_1, \eta_2$ is separable with roots $\{ x_1, \overline{x_1} \}$, and thus the only possibilities are $x_1 = x_2$ (so $\eta_1 = \eta_2$) or $x_1 = \overline{x_2}$ (so $\eta_1 = w\eta_2w^{-1}$). Otherwise, if $x_1 = \overline{x_1}$, we must have $x_1 = \pm 1$, and in each case, there is only one such $\eta \in X_M(F)$.
\end{proof}

In particular, we see that $\iota_B^{-1}(\o) = X_M(F) \cap \o$ is always finite.

\subsubsection{Compatibility of Refined Geometric Kernels}

\begin{proposition}\label{descent-compat}
    Let $f \in \Cc(X(\A))$ and let $f_M \in \Cc(X_M(\A))$ be as above. Then $$\int_{[M_{B'}]^1} \int_{K'} k_{f, B, \o}(mk)\,dk\,dm = \sum_{\eta \in \iota_B^{-1}(\o)} \int_{[M_{B'}]^1} k_{f_M, M_B, \eta}(m)\,dm.$$
\end{proposition}


\begin{proof}
    This is an easy consequence of the definitions.
\end{proof}

\begin{corollary}\label{descent-compat-cor}
    Let $f \in \Cc(X(\A))$ and let $f_M \in \Cc(X_M(\A))$ be as above. Then $$\int_{[M_{B'}]^1} \int_{K'} k_{f, B, \o}(mk)\,dk\,dm = \sum_{\eta \in \iota_B^{-1}(\o)} J_\eta^{M_B}(f_M),$$
    where on the right-hand side we see geometric RTF distributions associated to the Levi Galois symmetric space $X_M$ (see \cref{rtf-for-rk-one-tori}).
\end{corollary}

Note that the geometric RTF distributions for $X_M$ do not need to be regularized, as $[M_{B'}]^1$ is compact.

\subsection{Proof of \Cref{geom-dist-is-linear-in-T}}

Recall that $$J_\o^T(f) = \int_{[G']} k_{f, G, \o}(x) - \sum_{\delta \in B'(F) \backslash G'(F)} \widehat{\tau}_B(H_{0'}(\delta x)_B - T_B)k_{f, B, \o}(\delta x)\,dx.$$

Fix $T' \in \a_{0'}$ a sufficiently large auxiliary truncation parameter. Choose $T \in T' + \a_{0'}^+$. By the absorption lemma \labelcref{absorption-lemma}, applied with $H = H_{0'}(\delta x)_B - T'_B$ and $X = T_B - T'_B$ we can rewrite the sum above as
\begin{multline*}
    \int_{[G']} k_{f, G, \o}(x) - \sum_{\delta \in B'(F) \backslash G'(F)} \big[\widehat{\tau}_B^G(H_{0'}(\delta x)_B^G - (T')_B^G)\Gamma_G^G(H_{0'}(\delta x)_G - T'_G, T_G - T'_G) \\ - \widehat{\tau}_B^B(H_{0'}(\delta x)_B^B - (T')_B^B)\Gamma_B^G(H_{0'}(\delta x)_B - T_B, T_B - T'_B) \big]k_{f, B, \o}(\delta x)\,dx.
\end{multline*}

Now, splitting this integral in two, and simplifying (using $X^G = X$, $\Gamma_G^G(H_G, X_G) = 1$, and $\widehat{\tau}_B^B = 1$) we obtain
\begin{multline*}
    \int_{[G']} k_{f, G, \o}(x) - \sum_{\delta \in B'(F)\backslash G'(F)} \widehat{\tau}_B(H_{0'}(\delta x)_B - (T')_B) k_{f, B, \o}(\delta x)\,dx \\ + \int_{[G']} \sum_{\delta \in B'(F) \backslash G'(F)} \Gamma_B^G(H_{0'}(\delta x)_B - T_B, T_B - T'_B)k_{f, B, \o}(\delta x)\,dx.
\end{multline*}

We recognize the expression on the first line as $J^{T'}_\o(f)$. To evaluate the second integral, unfold along $B'(F) \subset G'(F) \subset G'(\A)$ to arrive at $$\int_{B'(F) \backslash G'(\A)} \Gamma_B^G(H_{0'}(x)_B - T_B, T_B - T'_B)k_{f, B, \o}(x)\,dx.$$

We now apply the adelic Iwasawa decomposition $G'(\A) = B'(\A)K'$ to write $B'(F)\backslash G'(\A) = [N_{B'}] \x A_{B'}^{\infty} \x [M_{B'}]^1 \x K'$. With respect to this decomposition, write $x = n a m k$. Decompose the Haar measure on $G'(\A)$ as $dx = e^{-2\rho_{B'}(H_{B'}(a))}dn\,da\,dm\,dk$. Thus, we have
$$
    \int_{[M_{B'}]^1}\int_{A_{B'}^{\infty}}\int_{K'} \Gamma_B^G(H_{0'}(a)_B - T_B, T_B - T'_B)e^{-2\rho_{B'}(H_{B'}(a))}e^{2\rho_{B'}(H_{B'}(a))}
    k_{f, B, \o}(mk)\,da\,dk\,dm.
$$
where we have used that $k_{f, B, \o}$ is $N_{B'}(\A)$-invariant and that $k_{f, B, \o}(ax) = e^{2\rho_{B'}(H_{B'}(a))}k_{f, B, \o}(x)$.

This integral factors into the product of $$\int_{[M_{B'}]^1} \int_{K'} k_{f, B, \o}(mk)\,dk dm \;\; \text{ and }  \int_{A_{B'}^{\infty}}\Gamma_B^G(H_{0'}(a)_B - T_B, T_B - T'_B)\,da.$$ By \cref{descent-compat-cor}, this first integral is equal to $$\sum_{\eta \in \iota_B^{-1}(\o)} J_\eta^{M_B}(f_B).$$

Using the identification $A_{B'}^{\infty} \simeq \a_{B'} = \R$ afforded by $H_{0'}(-)$ and our choice of measure (the unique Haar measure which assigns the cocharacter lattice covolume one) we can write the second integral as $$\int_{\R} \Gamma_B^G(X - T', T - T')\,dX.$$

Finally, by \cref{gamma-calc} we have $\Gamma^G_B(X - T', T - T') = [\R_{>0}](X - T') - [\R_{>0}](X - T)$. Since $T > T'$ by hypothesis, this function is supported on the half-open interval $(T', T]$ where it is equal to one. Thus the integral is equal to $$\int_{T'}^{T}\,dX = T - T'.$$

We conclude that $$J_\o^T(f) = J^{T'}_\o(f) + (T - T')\!\!\sum_{\eta \in \iota_B^{-1}(\o)} J_\eta^{M_B}(f_B).$$ Since $T'$ was fixed, we see that for all $T$ sufficiently large, $J_\o^T(f)$ is linear in $T$. The constant term is visibly $$J_{\o}^{T'}(f) - T'\sum_{\eta \in \iota_B^{-1}(\o)} J^{M_B}_\eta(f_B).$$

This completes the proof of \cref{geom-dist-is-linear-in-T}. \qed

\subsection{The Regularized Geometric RTF Distribution}

We are now (finally!) in a position to define the \emph{regularized} geometric RTF distribution.

\begin{definition}
    Let $f \in \Cc(X(\A))$ and let $\o$ be a geometric datum. By \cref{geom-dist-is-linear-in-T}, for all $T$ sufficiently large, $T \mapsto J^T_\o(f)$ is a linear polynomial in $T$. 
    
    Define $J_{\o}(f)$ as the constant term of this linear polynomial.
\end{definition}

The remainder of this article is devoted to obtaining a \emph{fine} geometric expansion of the regularized RTF distribution.

\begin{remark}\label{interpreting-the-non-const-term}
    If we regard $T \mapsto J_\o^T(f)$ as a function of $T$, \cref{geom-dist-is-linear-in-T} can be rephrased as $$\frac{d}{dT}J_\o^T(f) = \sum_{\eta \in \iota_B^{-1}(\o)} J^{M_B}_\eta(f_M),$$ for $T$ sufficiently large. Thus, we see that the derivative of the geometric RTF with respect to the truncation parameter $T$ has a concrete interpretation in terms of a geometric RTF for the Levi Galois symmetric space. By the RTF for Galois periods of rank one tori (see \cref{rank-one-tori-galois-symmetric-space}), this can also be re-interpreted spectrally. This suggests that it might be profitable to retain higher-order terms of RTF distributions when stabilizing, or when comparing two RTF. 
\end{remark}

\part{The Fine Geometric Expansion}\label{part-three}

Let $\mathfrak{o} \in \mathcal{O}^X$ be a geometric datum and fix $f \in \Cc(X(\A))$. Our goal now is to obtain a more useful expression for the regularized geometric distribution $J_{\o}(f)$. This task can be split into three cases, depending on the nature of the geometric datum. The elliptic datum are easily dealt with; in this case $J_\o(f)$ is given by a relative orbital integral. The regular semisimple but non-elliptic datum yield weighted relative orbital integrals. Finally, one must deal with the unipotent data, and this is where the bulk of the work lies. Here $J_\o(f)$ can be expressed as a volume term plus a certain sum of special values of Tate zeta integrals. Surprisingly, the key step is to re-interpret the truncated geometric kernel for the unipotent datum in terms of the \emph{Springer resolution} of the rational nilpotent cone in $\mathfrak{sl}_{2, F}$. This geometric viewpoint is simultaneously linked to the concrete mechanics of truncation, as well as the presence of Tate zeta functions.

\subsection{Descendant Galois Symmetric Pairs}\label{descendants}

\begin{definition}
    Let $\o \in \mathcal{\O}$ be a geometric datum, and let $p_{\o}(T) \in F[T]$ be the polynomial associated to $\chi(\o)$. Define the splitting type of $\o$ to be the $F$-algebra $\Spl(\o) \coloneqq F[T]/p_{\o}(T)$.
\end{definition}

It is easy to see that $\o$ is regular semisimple (in the invariant-theoretic sense, see \cref{geometric-data}) if and only if its splitting type is a quadratic étale $F$-algebra.

\begin{definition}
    Let $\o \in \mathcal{\O}$ be a geometric datum. Then we say
    \begin{enumerate}
        \item $\o$ is elliptic if its splitting type is a quadratic étale $F$-algebra distinct from $E$.
        \item $\o$ is regular semisimple if its splitting type is an arbitrary quadratic étale $F$-algebra.
        \item $\o$ is unipotent if its splitting type is non-reduced.
    \end{enumerate}
\end{definition}

The definitions above are justified by the following construction, which uses the classification of Galois symmetric spaces associated to rank one tori. This is discussed in detail in \cref{rtf-for-rk-one-tori}. In particular, see \cref{galois-symmectric-spaces-from-tori} where we explain how to obtain such a Galois symmetric space from a so-called generalized biquadratic extension $M/F$ and a quadratic subextension $L \subset M$.

\begin{proposition}
    Let $\o$ be a regular semisimple geometric datum and let $\eta \in \o$ be a basepoint. Let $G_\eta$, $G'_\eta$ be the stabilizer of $\eta$ under the $G$-conjugation and $G'$-conjugation actions, respectively.

    Then $(G_\eta, G'_\eta)$ is a Galois symmetric pair of rank one tori, isomorphic to the Galois symmetric pair associated to the generalized biquadratic extension $\Spl(\o) \otimes_F E$ and quadratic subextension $\Spl(\o)'$, obtained by reflecting $\Spl(\o)$ through $E$.
\end{proposition} 

\begin{proof}
     To begin, note that $\eta \in X(F) \subset G(F) = G'_E(F \otimes_F E) = G'(E)$.
     
     First, we claim that $\Res_{E/F}(G'_{\eta, E}) = G_{\eta}$ as subgroups of $G$. Here $G'_{\eta, E}$ denotes the base change of $G'_{\eta}$ to $E$. Indeed, let $A$ be a test $F$-algebra. Then
        \begin{align*}
            G_\eta(A) &= \{ g \in G(A) : g\eta g^{-1} = \eta \} \\
            &= \{ g \in G'_E(A \otimes_F E) : g \eta g^{-1} = \eta \} \\ 
            &= \{ g \in G'(A \otimes_F E) : g \eta g^{-1} = \eta \} = \Res_{E/F}(G_\eta')(A).
        \end{align*}
    Thus, $(G_\eta, G'_{\eta})$ is a Galois symmetric pair, with Galois involution given by restricting the Galois involution on $(G, G')$.

     Since the characteristic polynomial of $\eta$ is separable, we see that $\eta$ is regular semisimple \emph{as an element of $G$}, and thus $G_\eta$ is a maximal torus in $G$. Thus, we've shown that $(G_\eta, G'_\eta)$ is a Galois symmetric pair of rank one tori.

     Now, let $M/F$ be the generalized biquadratic extension corresponding to $G_\eta$, and let $L/F$ be the subextension corresponding to $G'_\eta$ as in \cref{biquadratic-tori}.

     It remains to show that $L \simeq \Spl(\o)'$, where $\Spl(\o)'$ is the reflection of $\Spl(\o)$ through $E$ (see \cref{reflection}). We spell out the details in the most interesting case: when $M$ and $L$ are both honest field extensions, and so $$(G_\eta, G'_\eta) = (\Res_{E/F} \Nm^1_{M/E}, \Nm^1_{L/F}).$$ The remaining cases are handled entirely analogously.

     Write $L = F(\alpha^{1/2})$, $L' = F(\beta^{1/2})$ and $E = F(\alpha^{1/2}\beta^{1/2})$ for the three quadratic subextensions of $M$. With this presentation of $E$, we can write $X$ as $$X = \{ \Mat{a}{b\alpha^{1/2}\beta^{1/2}}{c\alpha^{1/2}\beta^{1/2}}{\overline{a}} : a \in E, b, c \in F, a\overline{a} - bc\alpha\beta = 1 \}.$$ Without loss of generality, we can identify $G'_\eta$ with a copy of $\Nm^1_{L/F}$ inside $G'$ embedded as $$\Nm^1_{L/F} = \{ \Mat{x}{\alpha y}{y}{x} : x, y \in F, x^2 - \alpha y^2 = 1 \} \subset G'.$$

     An easy calculation shows that $$\Mat{x}{\alpha y}{y}{x} \text{ stabilizes } \Mat{a}{b\alpha^{1/2}\beta^{1/2}}{c\alpha^{1/2}\beta^{1/2}}{\overline{a}} \text{ if and only if $a = \overline{a}$ and $b = \alpha c$.}$$ Thus, we see that the subscheme fixed by $\Nm^1_{L/F}$ is precisely the matrices that have the form $$\eta = \Mat{a}{\alpha c \alpha^{1/2}\beta^{1/2}}{c \alpha^{1/2}\beta^{1/2}}{a}$$ where $a, c \in F$ and $a^2 - \beta(\alpha c)^2 = 1$.

     If $\eta \in \o$ is such an element, it has characteristic polynomial $T^2 - 2aT + 1 \in F[T]$, which has roots $a \pm \sqrt{a^2 - 1}$. Since $a^2 - 1 = \beta (\alpha c)^2$, these roots are $a \pm \alpha c \beta^{1/2}$. Thus, we see that $\Spl(\o) \simeq F(\beta^{1/2}) = L'$, or equivalently $\Spl(\o)' \simeq L$.
\end{proof}

For regular semisimple $\o \in \O^X$ with a basepoint $\eta \in \o$, we call the corresponding Galois symmetric pair of rank one tori $(G_\eta, G'_\eta)$ the descendant Galois symmetric pair, following \cite{aizenbud-gourevitch}. The isomorphism class of such is independent of the chosen basepoint (it depends only on $\Spl(\o)$).

\begin{corollary}\label{ellipticity-concretely}
    Let $\o \in \mathcal{\O}$ be a regular semisimple geometric datum and let $(H, H')$ be the (isomorphism class) of the descendant Galois symmetric pair. Then the following are equivalent
    \begin{enumerate}
        \item $\o$ is elliptic.
        \item $H'$ is elliptic.
        \item $\o$ does not meet $X_M$.
    \end{enumerate}
\end{corollary}
\begin{proof}
    We have that $\o$ is elliptic if and only if $\Spl(\o) \neq E$ if and only if $\Spl(\o)' \neq F \x F$. The claim now follows immediately from the previous proposition.
\end{proof}

\quash{
\begin{definition}
    Let $\e = L'$ be an étale quadratic $F$-algebra. The endoscopic symmetric space $X_\e$ is defined to be the Galois symmetric space associated to the pair $$(\Res_{E/F} \Nm^1_{M/E}, \Nm^1_{L/F})$$ where $M = L' \otimes_F E$.
\end{definition}
}

\subsection{The Elliptic Terms}

Let $\o$ be an elliptic geometric datum. Fix a basepoint $\eta \in \o$, and let $(G_\eta, G'_\eta)$ be the descendant Galois symmetric pair. By \cref{ellipticity-concretely}, $G'_{\eta}$ is elliptic, and $\o \cap M_B$ is empty. It follows that $k^T_{f, B, \o}$ is identically zero, and we conclude that $$J^T_{\o}(f) = \int_{[G']} k^T_{f, \o}(x)\,dx = \int_{[G']} k_{f, G, \o}(x)\,dx.$$ Recall that $$k_{f, G, \o}(x) = \sum_{\eta \in \o} f(x^{-1}\eta x).$$ Let $\{ \eta_{\xi} \}$ be a system of representatives for the rational $G'$-orbits in $\o$, where $\xi$ runs through $\ker[H^1(F, G'_\eta) \to H^1(F, G')]$, and $\eta_1 = \eta$. Thus, we can rewrite this as $$k_{f, G, \o}(x) = \sum_{\xi \in H^1(F, G'_\eta)} \sum_{ \delta \in G'_{\eta_\xi}(F) \backslash G'(F) } f(x^{-1}\delta^{-1} \eta_\xi \delta x).$$ Unfolding, we see that $$J^T_{\o}(f) = \sum_{\xi \in H^1(F, G'_\eta)} \vol([G'_{\eta_{\xi}}]) \int_{G'_{\eta_{\xi}}(\A) \backslash G'(\A)} f(x^{-1}\eta_\xi x)\,dx.$$

We summarize our calculation in the following

\begin{proposition}\label{elliptic-calc}
    Let $\o \in \mathcal{O}^X$ be an elliptic geometric datum with basepoint $\eta \in \o$. Let $\{ \eta_{\xi} \}$ be a system of representatives for the rational $G'$-orbits in $\o$, where $\xi$ runs through $\ker[H^1(F, G'_\eta) \to H^1(F, G')]$, and $\eta_1 = \eta$. Then $$J_{\o}(f) = J_{\o}^T(f) = \sum_{\xi \in H^1(F, G'_\eta)} \vol([G'_{\eta_{\xi}}]) \int_{G'_{\eta_{\xi}}(\A) \backslash G'(\A)} f(x^{-1}\eta_\xi x)\,dx.$$
\end{proposition}

\subsection{The Regular Semisimple Terms}

Now let $\o$ be a regular semisimple geometric datum that is not elliptic. Fix $\eta \in \o$ a basepoint such that $\eta = \mat{y}{}{}{\overline{y}}$, where $y \in E^{\x}$ and $y\overline{y} = 1$. Then we see that $w \cdot \eta = w\eta w^{-1} = \mat{\overline{y}}{}{}{y}$ also lies in $\o$ and by \cref{iota-finite-fibers} this exhausts $\o \cap X_M$. By Hilbert's Theorem 90, there exists $u \in E^{\x}$ such that $y = u/\overline{u}$. If we set $\gamma = \mat{u}{}{}{u^{-1}} \in M_B(F)$, we have that $\eta = \gamma\theta(\gamma)^{-1}.$ The descendant Galois symmetric pair is $(M_B, M_{B'})$. Thus $\o$ is a single $G'(F)$-orbit.

\begin{proposition}\label{rss-calc}
    Let $\o \in \mathcal{O}^X$ be a regular semisimple geometric datum that is not elliptic. Fix a basepoint $\eta \in \o$, and define $v(x) \coloneqq H_{0'}(x) + H_{0'}(wx)$ for all $x \in G'(\A)$. Then
    \begin{multline*}
    J^T_\o(f) = -2 \vol([M_{B'}]^1) \int_{M_{B'}(\A)^1 \backslash G'(\A)} f(x^{-1}\eta x) v(x)\,dx \\ + 4T \vol([M_{B'}]^1) \int_{M_{B'}(\A)^1 \backslash G'(\A)} f(x^{-1}\eta x)\,dx.
    \end{multline*}
    In particular, $$J_\o(f) = -2 \vol([M_{B'}]^1) \int_{M_{B'}(\A)^1 \backslash G'(\A)} f(x^{-1}\eta x) v(x)\,dx.$$
\end{proposition}

\subsubsection{Modified Kernels}

To make further progress, following Arthur, we introduce modified kernels that are more amenable to explicit computation. Set $$\widetilde{k}_{f, G, \o}(x) \coloneqq k_{f, G, \o}(x)$$ and define $$\widetilde{k}_{f, B, \o}(x) \coloneqq \sum_{\eta \in \o \cap X_M(F)} \sum_{n \in N_B(F)/N_{B'}(F)} f(x^{-1}\gamma n \theta(n)^{-1} \theta(\gamma)^{-1} x).$$ Likewise, for $T \in \a_{0'}$, define the modified truncated kernel $$\widetilde{k}^T_{f, \o}(x) \coloneqq \widetilde{k}_{f, G, \o}(x) - \sum_{\delta \in B'(F) \backslash G'(F)} \widehat{\tau}_B(H_{0'}(\delta x) - T) \widetilde{k}_{f, B, \o}(\delta x),$$ and the modified geometric RTF distribution $$j^T_\o(f) \coloneqq \int_{[G']} \widetilde{k}^T_{f, \o}(x)\,dx.$$

\begin{proposition}
    With the notation as above,
    \begin{enumerate}
        \item For $T \in \a_{0'}$ sufficiently large, the integral of the modified kernel converges absolutely $$\int_{[G']} |\widetilde{k}^T_{f, \o}(x)| \,dx < \infty.$$
        \item For $T \in \a_{0'}$ sufficiently large, we have $$\left| \int_{[G']} k^T_{f, \o}(x) - \widetilde{k}^T_{f, \o}(x)\,dx \right| \ll e^{-T}.$$
    \end{enumerate}
\end{proposition}
\begin{proof}
    The proof of claim (1) follows the same pattern as the proof of absolute convergence of the usual truncated geometric kernel, but it is even simpler. The same formal partition of unity manipulations apply, so we are reduced once again to bounding $$\int_{B'(F) \backslash G'(\A)} i^T_{B, G}(x)|\widetilde{k}_{f, B, G, \o}(x)|\,dx,$$ where $\widetilde{k}_{f, B, G, \o}$ is defined by the same formula as $k_{f, B, G, \o}$, but with $\widetilde{k}_{f, P, \o}$ instead of $k_{f, P, \o}$, for all $P \in \F^G(B)$. Now using retraction onto the Levi Galois symmetric space, we see that when $T$ is sufficiently large, the term $\widetilde{k}_{f, B, G, \o}(x)$ vanishes identically. This establishes (1).

    To prove claim (2), we rewrite the two truncated kernels using the partition of unity and take differences. Since $k_{f, G, \o} = \widetilde{k}_{f, G, \o}$, these terms cancel, and since $\widetilde{k}_{f, B, G, \o}$ vanishes identically when $T$ is sufficiently large, we see this difference is nothing other than $$\int_{[G']} i^T_{B, G}(x)|k_{f, B, G, \o}(x)|\,dx,$$ the same term that was shown to be finite in the proof of convergence. Thus, the claim amounts to keeping track of how this bound depends on the truncation parameter $T$. By examining the proof of \cref{geom-cvg}, specifically \cref{integral-bound}, we see that $$\int_{[G']} i^T_{B, G}(x)|k_{f, B, G, \o}(x)|\,dx \ll \int_{H_{0'}(a) > T} e^{-2NH_{0'}(a)}\,da = \int_{T}^{\infty} e^{-2N X}\,dX \ll e^{-T},$$ where we are free to choose $N > 0$. The claim is proved.
\end{proof}

\subsubsection{Proof of \Cref{rss-calc}}

The plan now is to calculate $\int_{[G']} \widetilde{k}^T_{f, \o}(x)\,dx$ and show that it is also a linear polynomial in the truncation parameter. By part (2) of the proposition above, this will imply $$J_{\o}^T(f) = \int_{[G']} k^T_{f, \o}(x)\,dx = \int_{[G']} \widetilde{k}^T_{f, \o}(x)\,dx = j_\o^T(f).$$

Returning to our calculation, note that $w\eta w^{-1} = [w\gamma w^{-1}]\theta(w\gamma w^{-1})$ as we have chosen a representative $w \in G'(F)$. Thus,

$$
    \widetilde{k}_{f, B, \o}(x) = \sum_{w \in W} \sum_{ n \in N_B^{opp}(F) } f(x^{-1}[w\gamma w^{-1}]n \theta(n)^{-1} \theta(w\gamma w^{-1})^{-1} x).
$$   

We require a geometric lemma.

\begin{lemma}
    For $w \in W$, we have that $$\{ [w\gamma w^{-1}]n \theta(n)^{-1} \theta(w\gamma w^{-1})^{-1} : n \in N_B^{opp}(F) \}$$ is the $B'(F)$-orbit through $w \cdot \eta$.
\end{lemma}
\begin{proof}
    We prove the claim when $w$ is trivial; the case where $w$ is non-trivial is entirely analogous. Since $M_{B'}(F)$ stabilizes $\eta$, it suffices to show that $N_{B'}(F) \cdot \eta$ agrees with the set above. Now write $n = \mat{1}{a}{}{1}$ for $a \in F$. Then we calculate $$n\eta n^{-1} = \Mat{y}{ a(\overline{y} - y)}{}{\overline{y}}.$$

    On the other hand, if $n' \in N_B^{opp}$, then $n' = \mat{1}{\tau^{1/2} b}{}{1}$ for $b \in F$, and we calculate $$\Mat{u}{}{}{u^{-1}}\Mat{1}{\tau^{1/2}b}{}{1} \theta(\Mat{u}{}{}{u^{-1}}\Mat{1}{\tau^{1/2}b}{}{1})^{-1} = \Mat{y}{2\tau^{1/2}b u\overline{u}}{}{\overline{y}}.$$

    We see that as $a$ ranges through $F$ and $b$ ranges through $F$, these two sets clearly agree.
\end{proof}

As an immediate consequence, we have $$\widetilde{k}_{f, B, \o}(x) = \sum_{w \in W} \sum_{ \delta_2 \in B'_{w \cdot \eta} \backslash B'(F)} f(x^{-1} \delta_2^{-1} (w \cdot \eta) \delta_2 x).$$

Thus, we obtain the following expression for $\widetilde{k}^T_{f, \o}(x)$:
\begin{multline*}
    \widetilde{k}^T_{f, \o}(x) = \sum_{\delta \in M_{B'}(F) \backslash G'(F)} f(x^{-1} \delta^{-1} \eta \delta x) \\ - \sum_{\delta_1 \in B'(F) \backslash G'(F)} \widehat{\tau}_B(H_{0'}(\delta_1 x) - T)\sum_{w \in W} \sum_{ \delta_2 \in B'_{w \cdot \eta}(F) \backslash B'(F)} f(x^{-1} \delta_1^{-1} \delta_2^{-1} (w \cdot \eta) \delta_2 \delta_1 x).
\end{multline*}

Note that $H_{0'}(\delta_2 \delta_1 x) = H_{0'}(\delta_1 x)$, thus we may fold along $B'_{w \cdot \eta}(F) \subset B'(F) \subset G'(F)$ to obtain 
\begin{multline*}
    \widetilde{k}^T_{f, \o}(x) = \sum_{\delta \in M_{B'}(F) \backslash G'(F)} f(x^{-1} \delta^{-1} \eta \delta x) \\ - \sum_{w \in W} \sum_{\delta_1 \in B'_{w \cdot \eta}(F) \backslash G'(F)} \widehat{\tau}_B(H_{0'}(\delta_1 x) - T) f(x^{-1} \delta_1^{-1} w \eta w^{-1} \delta_1 x).
\end{multline*}

Now, in the inner-most sum, write $\delta_2 = w^{-1} \delta_1$. As $\delta_1$ ranges through representatives for $B'_{w \cdot \eta}(F) \backslash G'(F)$, we see that $\delta_2$ ranges through $B'_{\eta}(F) \backslash G'(F)$. Since $B'_{\eta} = G'_\eta = M_{B'}$, we conclude that $$\widetilde{k}^T_{f, \o}(x) = \sum_{\delta \in M_{B'}(F) \backslash G'(F)} f(x^{-1}\delta^{-1} \eta \delta x)\left[1 - \sum_{w \in W} \widehat{\tau}_B(H_{0'}(w \delta x) - T)\right].$$

It follows that $$\int_{[G']} \widetilde{k}^T_{f, \o}(x)\,dx = \int_{M_{B'}(F) \backslash G'(\A)} f(x^{-1}\eta x)\psi^T\! (x)\,dx,$$ where $$\psi^T\!(x) \coloneqq 1 - \sum_{w \in W} \widehat{\tau}_B(H_{0'}(wx) - T).$$

\begin{lemma}
    For $m \in M_{B'}(\A)^1$, $\psi^T\!(mx) = \psi^T\!(x).$
\end{lemma}
\begin{proof}
    Indeed, for $w \in W$, $H_{0'}(wmx) = H_{0'}([wmw^{-1}]wx) = H_{0'}(wx)$.
\end{proof}

Since the stabilizer of $\eta$ in $G'(\A)$ is $M_{B'}(\A) \supset M_{B'}(\A)^1$, we can factor this integral as $$\vol([M_{B'}]^1)\int_{M_{B'}(\A)^1 \backslash G'(\A)} f(x^{-1}\eta x) \left[\int_{A_{B'}^{\infty}} \psi^T\!(ax)\,da \right]\,dx.$$ Here we are using the decomposition $A_{B'}^{\infty} M_{B'}(\A)^1 = M_{B'}(\A)$.

Finally, we calculate the inner-most integral.

\begin{lemma}
    Fix $x \in G'(\A)$. Then
    $$\int_{A_{B'}^{\infty}} \psi^T\!(ax)\,da = -2(H_{0'}(x) + H_{0'}(wx)) + 4T.$$
\end{lemma}
\begin{proof}
    The function $a \mapsto \psi^T\!(ax)$ can only take on values $-1, 0, 1$. It is equal to $-1$ precisely when $H_{0'}(ax) - T = H_{0'}(a) + H_{0'}(x) - T > 0$ and $H_{0'}(wax) - T = -H_{0'}(a) + H_{0'}(wx) - T > 0$. Identifying $A_{B'}^{\infty} \simeq \R$ via $H_{0'}(-)$ and setting $X \coloneqq H_{0'}(a)$, we see that the corresponding contribution to the integral is $$-\int_{T - H_{0'}(x)}^{H_{0'}(wx) - T} \,dX = 2T - (H_{0'}(wx) + H_{0'}(x)).$$

    Similarly $\psi^T\!(x)$ is equal to $1$ precisely when $H_{0'}(a) + H_{0'}(x) - T \leq 0$ and $-H_{0'}(a) + H_{0'}(wx) - T \leq 0$, and the corresponding contribution to the integral is $$\int_{H_{0'}(wx) - T}^{T - H_{0'}(x)}\,dX = 2T - (H_{0'}(wx) + H_{0'}(x)),$$ and the claim follows.
\end{proof}

Define $v(x) \coloneqq H_{0'}(x) + H_{0'}(wx)$. We conclude that \begin{multline*}
    j^T_\o(f) = -2 \vol([M_{B'}]^1) \int_{M_{B'}(\A)^1 \backslash G'(\A)} f(x^{-1}\eta x) v(x)\,dx \\ + 4T \vol([M_{B'}]^1) \int_{M_{B'}(\A)^1 \backslash G'(\A)} f(x^{-1}\eta x)\,dx.
\end{multline*}

Thus $j^T_{\o}$ is manifestly a linear polynomial in $T$. By our earlier remarks, we are done. \qed

\subsection{The Unipotent Terms}

There are two unipotent geometric datum, $\mathfrak{o}_{unip}^+ \coloneqq \chi^{-1}(1)$ and $\mathfrak{o}_{unip}^{-} \coloneqq \chi^{-1}(-1)$. We focus on $\mathfrak{o}^+ \coloneqq \mathfrak{o}_{unip}^+$ in this subsection. At the end, we will describe the modifications required to treat $\o_{unip}^-$.

\subsubsection{Description of Unipotent Data}

For the remainder of this section, set $\varepsilon = -1$. Recall that we have chosen $\tau^{1/2} \in E$ of trace zero such that $E = F(\tau^{1/2})$.

\begin{lemma}
    We have that $$\mathfrak{o}^+ = \{ \Mat{1 + \tau^{1/2} a}{\tau^{1/2} b}{\tau^{1/2} c}{1 - \tau^{1/2} a} : a^2 + bc = 0, a, b, c \in F \}.$$ This space is isomorphic to $\mathcal{N}$, the nilpotent cone of $\mathfrak{sl}_{2, F}$ via $\frac{1}{\tau}\kappa_{\varepsilon}^{-1}$.
\end{lemma}
\begin{proof}
    Let $\eta = \mat{x}{\tau^{1/2} b}{\tau^{1/2} c}{\overline{x}} \in X(F)$ such that $\chi(\eta) = 1$. Then $\eta$ has trace 2 and $x + \overline{x} = 2$. Set $y = x - 1$. Then $\overline{x} - 1 = (2 - x) - 1 = 1 - x = -y$, and $\overline{y} = \overline{x} - 1 = -y$, so $y = \tau^{1/2} a$, and we win. The last claim follows immediately from an explicit calculation with $\kappa_{\varepsilon}^{-1}$.
\end{proof}

Said differently, $\mathfrak{o}^+$ is isomorphic to the nilpotent cone in the $\theta$-antiinvariant subspace of the $\mathfrak{sl}_{2, E}$.

Write $\mathfrak{o}^{+, reg}$ for the complement of $\mat{1}{}{}{1} \in \mathfrak{o}^+$. Base changing to the algebraic closure, $\mathfrak{o}^+$ consists of two geometric orbits: the identity, and the regular unipotent locus. Write $Z' \simeq \mu_2$ for the center of $G'$.

\begin{lemma}
    Let $\eta = \mat{1}{\tau^{1/2} a}{}{1} \in \mathfrak{o}^{+, reg}$. Then $G'_{\eta} = Z' \cdot N_{B'}$. 
\end{lemma}
\begin{proof}
    This is a simple, explicit calculation.
\end{proof}

It follows that for $\eta = \mat{1}{\tau^{1/2}}{}{1} \in \mathfrak{o}^+$ a basepoint in $\o^{+, \reg}$, we have $H^1(F, G'_{\eta}) \simeq H^1(F, \mu_2) \simeq F^{\x}/(F^{\x})^2$, and thus the rational $G'$-orbits in the regular locus are indexed by $F^{\x}/(F^{\x})^2$. In particular, given a family of representatives $\{ b \in F^{\x} \}$ for $F^{\x}/(F^{\x})^2$ containing $1$, $\mat{1}{\tau^{1/2} b}{}{1}$ is a representative for the $G'$-orbit corresponding to $b \in F^{\x}/(F^{\x})^2$.

\subsubsection{Explicit Formulae}

By definition, $$k_{f, G, \mathfrak{o}^+}(x) = \sum_{\eta \in \mathfrak{o}^+} f(x^{-1}\eta x),$$ and $$k_{f, B, \mathfrak{o}^+}(x) = \int_{N_B^{opp}(\A)} f(x^{-1} n \theta(n)^{-1} x)\,dn.$$

Using the orbit decomposition described above, we can rewrite $k_{f, G, \mathfrak{o}^+}(x)$ as $$k_{f, G, \mathfrak{o}^+}(x) = f(1) + \sum_{\delta \in Z'N_{B'}(F) \backslash G'(F)} \sum_{b \in F^{\x}/(F^{\x})^2}  f(x^{-1} \delta^{-1} \mat{1}{\tau^{1/2} b}{}{1} \delta x).$$

Consider the map $\nu : Z'N_{B'}\backslash G' \to B' \backslash G'$. It is easy to see that this map is surjective, and the fibers above $F$-points on the target are $Z'(F) \backslash M_{B'}(F)$-torsors. By grouping the points of $(Z'N_{B'})(F)\backslash G'(F)$ according to their image under $\nu$, we can rewrite the above as $$k_{f, G, \mathfrak{o}^+}(x) = f(1) + \sum_{\delta \in B'(F) \backslash G'(F)} \sum_{\mu \in Z'(F)\backslash M_{B'}(F)} \sum_{b \in F^{\x}/(F^{\x})^2} f(x^{-1} \delta^{-1} \mu^{-1} \mat{1}{\tau^{1/2} b}{}{1} \mu \delta x).$$

\begin{lemma}
    As $\mu$ runs over a set of representatives for $Z'(F)\backslash M_{B'}(F)$, and $b$ runs over a set of representatives for $F^{\x}/(F^{\x})^2$, the elements $\mu^{-1} \mat{1}{\tau^{1/2} b}{}{1} \mu$ exhaust the set $\{ \mat{1}{\tau^{1/2} c}{}{1} : c \in F^{\x} \}$
\end{lemma}
\begin{proof}
    Indeed choose representatives for $Z'(F) \backslash M_{B'}(F)$, say $\mu = \mat{y^{-1}}{}{}{y}$ for $y \in F^{\x}$. Then $$\mu^{-1}\mat{1}{\tau^{1/2}b}{}{1}\mu = \mat{1}{\tau^{1/2}by^2}{}{1},$$ and the claim is proved.
\end{proof}

By the lemma above, we can further rewrite this expression as
$$k_{f, G, \mathfrak{o}^+}(x) = f(1) + \sum_{\delta \in B'(F) \backslash G'(F)} \sum_{b \in F^{\x}} f(x^{-1} \delta^{-1} \mat{1}{\tau b}{}{1} \delta x).$$

Finally, let us use the fact that $\mat{1}{\tau^{1/2} b}{}{1} = \mat{1}{\tau^{1/2} b/2}{}{1}\theta(\mat{1}{\tau^{1/2} b/2}{}{1})^{-1}$, along with a change of variables $b/2 \mapsto b$, to write this as $$k_{f, G, \mathfrak{o}^+}(x) = f(1) + \sum_{\delta \in B'(F) \backslash G'(F)} \sum_{b \in F^{\x}} f(x^{-1} \delta^{-1} \mat{1}{\tau^{1/2} b}{}{1}\theta(\mat{1}{\tau^{1/2} b}{}{1})^{-1} \delta x).$$

Using the isomorphism $a \mapsto \mat{1}{\tau^{1/2} a}{}{1}$ to identify $\A \simeq N_B^{opp}(\A)$, we can also rewrite \begin{multline}
    \sum_{\delta \in B'(F) \backslash G'(F)} \widehat{\tau}_B(H_{0'}(\delta x) - T)k_{B, f, \mathfrak{o}^+}(\delta x) \\ = \sum_{\delta \in B'(F) \backslash G'(F)} \widehat{\tau}_B(H_{0'}(\delta x) - T)\int_{\A} f(x^{-1} \delta^{-1} \mat{1}{\tau^{1/2} a}{}{1} \theta(\mat{1}{\tau^{1/2} a}{}{1})^{-1} \delta x)\,da.
\end{multline}

Now for $x \in G'(\A)$, define $f_x : \A \to \C$ by $$f_x(b) \coloneqq f(x^{-1}\mat{1}{\tau^{1/2} b}{}{1}\theta(\mat{1}{\tau^{1/2} b}{}{1})^{-1}x).$$

\begin{lemma}
    The function $f_x : \A \to \C$ lies in $\Cc(\A)$. If $f = \otimes'_v f_v$ is factorizable and basic outside of the finite set of places $V$, then $f_x = \1_{\mathcal{O}_F[1/V]} \otimes f_{x_V}$, where $f_{x_V} = \otimes_{v \in V} f_{v, x_v}$.
\end{lemma}
\begin{proof}
    First note that if $f$ is factorizable, so is $f_x$. Thus, without loss of generality, we may assume $f$ is factorizable. From the definition of $\Cc(X(\A))$ (see \cref{int-models-basic-fns}) we have that at almost all places $v$, $f_v = \1_{X(\mathcal{O}_v)} = \1_{G(\O_v) \star 1} \in \Cc(X(F_v))$.
        
    Moreover, for any fixed $x \in G'(\A)$, at almost all places, $x_v \in G'(\mathcal{O}_v)$, and the corresponding conjugation action preserves $X(\mathcal{O}_v)$. Thus, for almost all places $v$, \begin{align*}
        f_{x, v}(b_v) &= \1_{X(\mathcal{O}_v)}(x_v^{-1}\mat{1}{\tau^{1/2} b_v}{}{1}\theta(\mat{1}{\tau^{1/2} b_v}{}{1})^{-1}x_v) \\ &= \1_{X(\mathcal{O}_v)}(\mat{1}{\tau^{1/2} b_v}{}{1}\theta(\mat{1}{\tau^{1/2} b_v}{}{1})^{-1}) \\ &= \1_{\mathcal{O}_v}(b_v),
    \end{align*} as desired.
\end{proof}

\begin{remark}
    In particular, we see that if $f = \1_{X(\widehat{\O}_F)} \otimes f_{\infty}$ is unramified away from the infinite places, $f_x = \1_{\widehat{\O}_F} \otimes f_{x, \infty} \in \Cc(\A)$.
\end{remark}

It follows that $$k^T_{f, \mathfrak{o}^+}(x) = f(1) + \sum_{\delta \in B'(F) \backslash G'(F)} \sum_{b \in F^{\x}} f_{\delta x}(b) - \widehat{\tau}_B(H_{0'}(\delta x) - T)\widehat{f}_{\delta x}(0),$$ where $\widehat{f}_{\delta x} \in \Schw(\A)$ denotes the Fourier transform of $f_{\delta x} \in \Cc(\A)$.

We now describe some transformation properties of $f_x$ and its Fourier transform.

\begin{proposition}
    Fix $b \in \A$.
    \begin{enumerate}
        \item The function $x \mapsto f_x(b)$ extends to a function on $\GL_{2, F}(\A) \supset G'(\A)$, which satisfies $f_{zx}(b) = f_x(b)$ for all $z \in Z_{\GL_2}(\A)$.
        \item Let $x = ntk \in \PGL_{2, F}(\A)$ be the Iwasawa decomposition of an element in $\PGL_2(\A)$, where we identify $t \in \A^{\x}$ with its representative $\mat{t}{}{}{1} \in \PGL_2(\A)$.
        \begin{enumerate}
            \item $f_{ntk}(b) = f_{tk}(b)$ and $\widehat{f}_{ntk}(b) = \widehat{f}_{tk}(b)$.
            \item $f_{tk}(b) = f_k(b/t)$ and $\widehat{f}_{tk}(b) = \widehat{f}_{k}(at)|t|$.
        \end{enumerate}
    \end{enumerate}
\end{proposition}
\begin{proof}
    The first claim follows immediately from the the fact that the conjugation action of $G'(\A)$
    on $X(\A)$ extends to a conjugation action of $\GL_{2, F}(\A)$ on $X(\A)$. To see claim (a), note that $\mat{1}{\tau^{1/2} b}{}{1}\theta(\mat{1}{\tau^{1/2} b}{}{1})^{-1} \in X(\A)$ is stabilized by $n \in N_{B'}(\A)$. The first part of claim (b) follows immediately from the following computation
    $$t^{-1}\mat{1}{\tau^{1/2} b}{}{1}\theta(\mat{1}{\tau^{1/2} b}{}{1})^{-1}t = \mat{1}{\tau^{1/2} b/t}{}{1}\theta(\mat{1}{\tau^{1/2} b/t}{}{1})^{-1}.$$ The second part of the claim (b) follows from this same computation coupled with a change of variables.
\end{proof}

We also record some standard properties of Tate's global zeta function:

\begin{lemma}\
    Let $h \in \Schw(\A)$ be a Schwartz function, and let $\omega : F^{\x} \backslash \A^{\x} \to \C^{\x}$ be a unitary Hecke character. Then Tate's global zeta function $$Z(h, \omega |\cdot|^s) \coloneqq \int_{\A^{\x}} h(t)\omega(t)|t|^s\,dt$$ is entire at $s = 1$ unless $\omega$ is trivial, in which case $Z(h, |\cdot|^s)$ has a simple pole at $s = 1$ with residue $$\vol([\G_m]^1)\widehat{h}(0).$$
\end{lemma}
\begin{proof}
    This is well-known, and goes back to Tate's thesis.
\end{proof}

We are now ready to evaluate the unipotent contribution to the geometric RTF.

\begin{proposition}
    \begin{multline}
        \int_{[G']} k^T_{f, \mathfrak{o}^+}(x)\,dx = \vol([\SL_2])f(1) + \sum_{\substack{\kappa \in \widehat{[\G_m]}\\\kappa^2 = 1, \kappa \neq 1}} Z(f^{\kappa}_{\widetilde{K}}, \kappa|\cdot|^1)
        \\ + \lim_{s \to 0}\left[Z(f_{\widetilde{K}}, |\cdot|^{1 + s}) - \vol([\G_m]^1)\widehat{f}_{\widetilde{K}}(0)\frac{e^{sT}}{s}\right].
    \end{multline}
\end{proposition}
\begin{proof}
    Recall from our discussion above that we were able to rewrite the truncated unipotent kernel as $$k^T_{f, \mathfrak{o}^+}(x) = f(1) + \sum_{\delta \in B'(F) \backslash G'(F)} \sum_{b \in F^{\x}} f_{\delta x}(b) - \widehat{\tau}_B(H_{0'}(\delta x) - T)\widehat{f}_{\delta x}(0).$$ The contribution of $f(1)$ is clear. Thus, we are left to evaluate $$\int_{[G']} k^T_{f, \mathfrak{o}^+}(x) - f(1)\,dx.$$ Set $k^{T, *}_{f, \mathfrak{o}^+}(x) \coloneqq k^T_{f, \mathfrak{o}^+}(x) - f(1)$. To do so, we apply Fourier inversion relative to $\SL_2(\A) \subset \GL_2(\A)^1$ to rewrite the integral as $$\frac{1}{\vol([\G_m]^1)}\sum_{\kappa \in \widehat{[\G_m]^1}} \int_{\GL_2(F) \backslash \GL_2(\A)^1} k^{T, *}_{f, \mathfrak{o}^+}(x)\kappa(\det(x))\,dx.$$
    
    Here we have used the fact that $x \mapsto f_x(b)$ can be extended to a function on $\GL_{2, F}(\A)$. Moreover, note that $k^{T, *}_{f, \mathfrak{o}^+}(x)$ is invariant under the center of $\GL_2(\A)$. Thus, we see that the only unitary characters $\kappa : [\G_m]^1 \to \C^{\x}$ which contribute to this integral are quadratic, and we can further rewrite this integral as $$\frac{1}{\vol([\G_m]^1)}\sum_{\substack{\kappa \in \widehat{[\G_m]^1}\\ \kappa^2 = 1}}\int_{\GL_2(F) \backslash \GL_2(\A)^1} k^{T, *}_{f, \mathfrak{o}^+}(x)\kappa(\det(x))\,dx.$$ Next, the natural map $\GL_2(\A)^1 \to \PGL_2(\A)$ is surjective with kernel isomorphic to $\A^{\x, 1}$. It follows that we can further rewrite this integral as
    $$\sum_{\substack{\kappa \in \widehat{[\G_m]^1}\\ \kappa^2 = 1}}\int_{[\PGL_2]} k^{T, *}_{f, \mathfrak{o}^+}(x)\kappa(\det(x))\,dx.$$

    Notice that we can now rewrite the integrand entirely in terms of $\PGL_2$ and its related subgroups. Write $\widetilde{B}$ for the standard Borel subgroup of $\PGL_2$, with Levi decomposition $\widetilde{B} = M_{\widetilde{B}} N_{\widetilde{B}}$, and $\widetilde{K}$
    for the standard maximal compact subgroup. Then we can rewrite the integrand as $$k^{T, *}_{f, \mathfrak{o}^+}(x)\kappa(\det(x)) = \kappa(\det(x))\sum_{\delta \in \widetilde{B}(F) \backslash \PGL_2(F)} \sum_{b \in F^{\x}} f_{\delta x}(b) - \widehat{\tau}_B(H_{0'}(\delta x) - T)\widehat{f}_{\delta x}(0).$$ Indeed, $B'(F) \backslash G'(F) = (B' \backslash G')(F) = \mathbb{P}^1(F) = (\widetilde{B} \backslash \PGL_2)(F) = \widetilde{B}(F) \backslash \PGL_2(F)$. Also note that we can replace the sum over quadratic characters of $[\G_m]^1$ with quadratic characters of $[\G_m]$.

    Note that $H_{0'}(\delta x)$ descends (up to a constant factor) to the usual Harish-Chandra height for $\PGL_2$.
    
    There are two cases. First, suppose the quadratic Hecke character $\kappa$ is non-trivial.  
    Decompose $x$ according to the Iwasawa decomposition for $\PGL_2$ as $x = ntk$, with
    corresponding measure $dx = |t|^{-1}\,dn\,dt\,dk$. Using the transformation properties of $\widehat{f}_x$, we compute that \begin{multline}
        -\int_{\PGL_2(F) \backslash \PGL_2(\A)} \sum_{\delta \in \widetilde{B}(F) \backslash \PGL_2(F)} \kappa(\det(\delta x))\widehat{\tau}_B(H_{0'}(\delta x) - T)\widehat{f}_{\delta x}(0)\,dx \\ = -\int_{\widetilde{K}} \int_{[M_{\widetilde{B}}]} \int_{[N_{\widetilde{B}}]}
        \kappa(\det(tk))\widehat{\tau}_B(H_{0'}(t) - T)\widehat{f}_{ntk}(0)|t|^{-1}\,dn\,dt\,dk
        \\ = -\int_{\widetilde{K}} \kappa(\det(k))\widehat{f}_{k}(0) \int_{[M_{\widetilde{B}}]} \kappa(t)\widehat{\tau}_B(H_{0'}(t) - T)\,dt\,dk.
    \end{multline}
    Now, note that the inner-most integral factors through the integral of a non-trivial character over the compact abelian group $[M_{\widetilde{B}}]^1$. It follows that this integral vanishes.

    Using the Iwasawa decomposition again, we compute
    \begin{multline}
        \int_{\widetilde{K}} \int_{[M_{\widetilde{B}}]} \int_{[N_{\widetilde{B}}]} \sum_{b \in F^{\x}} f_{ntk}(b)\kappa(\det(tk))|t|^{-1}\,dn\,dt\,dk \\ = \int_{\widetilde{K}} \int_{F^{\x}\backslash\A^{\x}} \sum_{b \in F^{\x}} f_{tk}(b)\kappa(\det(k))\kappa(t)|t|^{-1}\,dt\,dk \\ = \int_{\widetilde{K}} \int_{\A^{\x}} f_k(1/t)\kappa(\det(k))\kappa(t^{-1})|t|^{-1}\,dt\,dk = Z(f^{\kappa}_{\widetilde{K}}, \kappa|\cdot|^1)
    \end{multline}
    where we've used the fact that $\kappa$ is quadratic in the second last equality. Here we have set \begin{equation}\label{f-kappa}
        f^{\kappa}_{\widetilde{K}}(b) \coloneqq \int_{\widetilde{K}} \kappa(\det(k))f_k(b)\,dk.
    \end{equation} The final equality follows from unfolding and the change of variables $t \mapsto t^{-1}$.

    It remains to understand the contribution of the trivial character. For this, we introduce a modified test function $f_x^s : \A \to \C$ defined by $$f_x^s(b) \coloneqq f_x(b)e^{-sH_{0'}(x)}.$$ This function is manifestly Schwartz, and is factorizable if $f_x$ is factorizable.
    
    For $\Re s > 0$, we use the same techniques as above to compute
    \begin{align*}
        -\int_{\PGL_2(F) \backslash \PGL_2(\A)} &\sum_{\delta \in \widetilde{B}(F) \backslash \PGL_2(F)} \widehat{\tau}_B(H_{0'}(\delta x) - T)\widehat{f}^s_{\delta x}(0)\,dx\\
         &= -\int_{\widetilde{K}} \widehat{f}_{k}(0) \int_{[M_{\widetilde{B}}]} \widehat{\tau}_B(H_{0'}(t) - T)e^{-sH_{0'}(t)}\,dt\,dk \\ &= - \vol([\G_m]^1) \widehat{f}_{\widetilde{K}}(0) \int_{A^{\infty}_{\widetilde{B}}} \widehat{\tau}_B(H_{0'}(a) - T)e^{-sH_{0'}(a)}\,da
        \\ &= -\vol([\G_m]^1) \widehat{f}_{\widetilde{K}}(0) \int_{T}^{\infty} e^{-sX}\,dX \\ &= -\vol([\G_m]^1) \widehat{f}_{\widetilde{K}}(0) \frac{e^{-sT}}{s}.
    \end{align*}

    The remainder is given by
    \begin{multline}
        \int_{\widetilde{K}} \int_{[M_{\widetilde{B}}]} \int_{[N_{\widetilde{B}}]} \sum_{b \in F^{\x}} f^s_{ntk}(b)|t|^{-1}\,dn\,dt\,dk = \int_{\widetilde{K}} \int_{\A^{\x}} f_k(1/t)|t|^{-1 - s}\,dt\,dk = Z(f_{\widetilde{K}}, |\cdot|^{1 + s}).
    \end{multline}
    Here we are using the fact that for $k \in \widetilde{K}$, $f_k^s = f_k$.

    Thus, when $\Re s > 0$, the contribution from the trivial character is given by
    $$Z(f_{\widetilde{K}}, |\cdot|^{1 + s}) - \vol([\G_m]^1) \widehat{f}_{\widetilde{K}}(0) \frac{e^{-sT}}{s}.$$ This expression is meromorphic in $s$ with potential poles at $s = -1, 0$. By our recollection from Tate's thesis, we compute that the residue at $s = 0$ is zero. The value of this function at $s = 0$ is equal to the integral we were trying to get our hands on. Thus, the contribution of the trivial character is precisely \begin{equation}\label{cancellation}
        \lim_{s \to 0} \left[Z(f_{\widetilde{K}}, |\cdot|^{1 + s}) - \vol([\G_m]^1) \widehat{f}_{\widetilde{K}}(0) \frac{e^{-sT}}{s}\right],
    \end{equation} and this concludes our calculation.
\end{proof}

\begin{corollary}
    With the notation as in the previous proposition, 
    $$J_{\mathfrak{o}^+}(f) = \vol([\SL_2])f(1) + \sum_{\substack{\kappa \in \widehat{[\G_m]}\\\kappa^2 = 1, \kappa \neq 1}} Z(f^\kappa_{\widetilde{K}}, \kappa|\cdot|^1) + \left.\frac{d}{ds} sZ(f_{\widetilde{K}}, |\cdot|^{1 + s})\right|_{s = 0}.$$
\end{corollary}
\begin{proof}
    The power series expansion of the contribution of the trivial character about $s = 0$ is given by
    $$\gamma + \vol([\G_m]^1)\widehat{f}_{\widetilde{K}}(0)T + O(s),$$ where $\gamma$ is the constant
    coefficient in the expansion of $Z(f_{\widetilde{K}}, |\cdot|^{1 + s})$ about $s = 0$. Since none of the other terms in $J^T_{\mathfrak{o}^+}(f)$ depend on $T$, we win. 
\end{proof}

The remaining unipotent datum $\mathfrak{o}^-_{unip}$ can be handled entirely analogously. We simply state the relevant changes. Choose $\gamma_0 \coloneqq \mat{\tau^{1/2}}{}{}{\tau^{-1/2}} \in G(F)$. Then one easily verifies that $\gamma_0\theta(\gamma_0)^{-1} = -1$, and moreover that the image of the $\theta$-twisted action of $\gamma_0$ on $\mathfrak{o}^+$ is precisely $\mathfrak{o}^-$. For a test function $f \in \Cc(X(\A))$ as above, define \begin{equation}\label{f-prime}
    f'_x(b) \coloneqq f(x^{-1}(\gamma_0 \star \mat{1}{\tau^{1/2} b}{}{1})x).
\end{equation} One checks that this function transforms exactly like $f_x$. Then one can verify that the contribution of the datum $\mathfrak{o}^-$ for the test function $f$ is the same as the contribution of $\mathfrak{o}^+$, except where every instance of $f_x$ has been replaced with $f'_x$. Finally, note that $f'_x(0) = f(-1)$.

We record the final formula in the following proposition.

\begin{proposition}
    Let $f \in \Cc(X(\A))$. Then
    $$J_{\mathfrak{o}^-}(f) = \vol([\SL_2])f(-1) + \sum_{\substack{\kappa \in \widehat{[\G_m]}\\\kappa^2 = 1, \kappa \neq 1}} Z({f}'^\kappa_{\widetilde{K}}, \kappa|\cdot|^1) + \left.\frac{d}{ds} sZ(f'_{\widetilde{K}}, |\cdot|^{1 + s})\right|_{s = 0},$$ where ${f}'^\kappa_{\widetilde{K}}$ is defined as in \cref{f-kappa}, except every instance of $f_?$ has been replaced by $f'_?$ as defined in \cref{f-prime}.
\end{proposition}

\subsection{A Geometric Viewpoint}\label{geometric-viewpoint}

Understanding the unipotent contributions to the geometric side of the relative trace formula is a difficult problem, with no clear conceptual resolution.

Here we suggest a geometric viewpoint which motivated the calculations above. The basic idea is that there is a very close relationship between the truncated unipotent kernel $k^T_{f, \o^+}$ and the geometry of the following ``correspondence'':

\begin{center}
    \begin{tikzcd}
	& {T^*(B'\backslash G')} \\
	\mathbb{P}^1 = {B'\backslash G'} && {\overline{\o^{+, \reg}} = \mathcal{N}} 
	\arrow[from=1-2, to=2-1]
	\arrow[from=1-2, to=2-3]
\end{tikzcd}
\end{center}

The map on the left is the standard projection and the map on the right is the moment map for $T^*(B'\backslash G')$ as a $G'$-space, i.e. the Springer resolution of the nilpotent cone.

Away from the zero orbit, the Springer resolution is an isomorphism, and the diagram above yields a genuine correspondence $\o^{+, \reg} \dashrightarrow B'(F)\backslash G'(F)$. Organizing the points in $\o^{+, reg}$ according to their images under this correspondence is precisely the manipulation that allows one to rewrite $$\sum_{\eta \in \o^{+, \reg}}f(x^{-1}\eta x) \;\;\rightsquigarrow \;\; \sum_{\delta \in B'(F) \backslash G'(F)} \sum_{b \in F^{\x}} f(x^{-1} \delta^{-1} \mat{1}{\tau^{1/2} b}{}{1} \delta x).$$ Indeed the map $\nu$ described above is simply the inverse to the Springer resolution away from zero, composed with the projection onto $\mathbb{P}^1$.

We can now interpret the inner-most sum over $b \in F^{\x}$ as a sum over the rational points in (the open $\G_m$-orbit in) the fibers of the rank one vector bundle $T^*(B' \backslash G') \to B' \backslash G'$. In our calculations, the fibers of this bundle were identified with $N_B^{opp} \simeq \A^1$.

At zero, this diagram fails to give a correspondence; the fiber of the Springer resolution is a copy of $\mathbb{P}^1 \subset T^*(B' \backslash G')$ embedded as the zero section of $\mathbb{P}^1 = B' \backslash G' \into T^*(B' \backslash G')$.

From this point of view, we can interpret the integral of $k^{T, *}_{f, \o^+}$ as a kind of regularized period associated to a theta series on the $M_{B'}$-torsor over $\mathbb{P}^1$ given by $$T^*(B' \backslash G') - \mathbb{P}^1 \to \mathbb{P}^1,$$ where the $M_{B'}$-action is by fiberwise \emph{squaring}. The problem with this $M_{B'}$-torsor is that it is not trivial. However, by identifying $T^*(B' \backslash G')$ with $T^*(\widetilde{B} \backslash \PGL_2)$ we see that the fibers of the projection onto $\mathbb{P}^1$ acquire the natural $\G_m \subset \PGL_2$ action for which we do obtain a trivial torsor $T^*(\widetilde{B} \backslash \PGL_2) - \mathbb{P}^1 \to \mathbb{P}^1$. The final formula now reflects the fact that (via Fourier inversion) this regularized theta series can be interpreted as a theta series for $\PGL_2$, and it unfolds to a sum of $\G_m$-periods over the fibers of $T^*(\widetilde{B} \backslash \PGL_2) - \mathbb{P}^1 \to \mathbb{P}^1$, i.e. Tate zeta integrals.

This geometric viewpoint interacts ``correctly'' with the machinery of relative truncation to produce the expected period integrals. In the calculation above, we see this in the evaluation of the contribution of the trivial character, where the pole of the unramified zeta integral is precisely cancelled by the pole arising from truncation, see \cref{cancellation}.

This suggests a similar perspective might be fruitful to study regularized unipotent terms in geometric RTF for higher rank Galois symmetric pairs $(H, H')$, especially in type A, where every nilpotent orbit over the algebraic closure can be realized as a dense open in the image of a moment map from $T^*(Q' \backslash H')$, for some class of parabolic subgroup $Q'$. (Nilpotent orbits with this property are called Richardson.)

\begin{remark}
    The idea of understanding unipotent terms in the usual trace formula in terms of zeta integrals has been suggested and explicated by Hoffmann and Hoffmann-Wakatsuki  \cite{hoffmann, hoffmann-wakatsuki}. See also the work of Chaudouard \cite{chaudouard-unip-1, chaudouard-unip-2} where substantial progress is made in understanding the global coefficients appearing in the unipotent terms of the fine spectral expansion of the usual trace formula for $\GL_n$ in terms of zeta integrals. What seems to be new here is the idea of organizing the unfolding of the unipotent orbital integral using the geometry of (generalized) Springer maps. We thank Spencer Leslie for bringing these references to our attention.
\end{remark}

\begin{remark}
    In forthcoming joint work with Spencer Leslie, we prove relative smooth transfer and a relative fundamental lemma for the Galois symmetric space associated to $(\Res_{E/F}\SL_{2, E}, \SL_{2, F})$. Write $X_\e$ for the endoscopic symmetric space for $X$ indexed by $\e$, an endoscopic datum corresponding to a quadratic étale $F$-algebra. Denote the transfer by $f \mapsto f^{\e}$. We show that $f^{\e}(\pm 1)$ agrees with the contributions to $J_{\o^{\pm}}(f)$ arising from the quadratic character associated to $\e$, in analogy with the results in \cite{labesse-langlands}.
\end{remark}

\newpage

\part*{Appendices}
\appendix

\section{Explicit Truncation Calculations for $\SL_2$}\label{explicit-trunc-calcs}

\ifSubfilesClassLoaded{%
\section{Explicit Truncation Calculations for $\SL_2$}%
}{}

In this appendix we specialize our discussion from \cref{setup} to the Galois symmetric pair associated to $\SL_2$. Recall the notation from \cref{part-two}. Our definitions are taken directly from \cite{zydor}.

\subsection{Chambers, Relative Chambers}

Recall that the pairing on $\a_{0'} = \a_{0} = \R$ is simply given by multiplication. Using the notation from \cref{chambers}, \cref{rel-chambers}, we easily calculate $\a_{G} = \{ 0 \}$, $\a_{G}^+ = \{ 0 \}$, and
$\a_{B} = \R$, $\a_{B}^+ = \R_{>0}$. Thus, $\varepsilon_G^G = 1$,
$\varepsilon_{B}^G = -1$ and $\varepsilon_{B}^{B} = 1$.

\subsection{Various Indicator Functions}

\begin{proposition}\
    \begin{enumerate}
        \item $\widehat{\tau}_G^G = [\R]$
        \item $\widehat{\tau}_B^B = [\{0\}]$
        \item $\widehat{\tau}_B^G = [\R_{> 0}]$.
    \end{enumerate}
\end{proposition}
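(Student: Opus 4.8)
The plan is to unwind the definition of $\widehat{\tau}_P^Q$ from \cite{zydor} in each of the three admissible pairs of parabolic subgroups of $\SL_2$ — namely $(P,Q) = (G,G)$, $(B,B)$, and $(B,G)$, corresponding to items (1), (2), (3) — using the data $\a_P$, $\a_P^+$, $\varepsilon_P^Q$ computed in the previous subsection. By definition (cf.\ \cite{zydor}), $\widehat{\tau}_P^Q$ is the characteristic function, regarded on $\a_0 = \R$ via the natural projection, of the cone cut out by the conditions $\varpi(H) > 0$ for $\varpi$ ranging over the fundamental weights $\widehat{\Delta}_P^Q$, together with the vanishing of the $\a_Q$-component of $H$. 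Since $\alpha$ is the only simple root of $\SL_2$, the first step is to observe that $\widehat{\Delta}_G^G = \widehat{\Delta}_B^B = \emptyset$ while $\widehat{\Delta}_B^G = \{\varpi_\alpha\}$ is a single fundamental weight.

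I would dispatch (1) and (2) first, where the positivity conditions are vacuous and only the vanishing condition is in play. For $(G,G)$ we have $\a_G = \{0\}$, so requiring the $\a_G$-component to vanish imposes nothing on $H \in \a_0$; hence $\widehat{\tau}_G^G$ is identically $1$, i.e.\ $[\R]$. For $(B,B)$ we have $\a_B = \a_0 = \R$, so requiring the $\a_B$-component to vanish forces $H = 0$; hence $\widehat{\tau}_B^B = [\{0\}]$. The contrast between these two is precisely the point to get right: it is the bookkeeping of which subspace is being killed, not any inequality, that separates $[\R]$ from $[\{0\}]$.

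For (3) the single surviving condition is $\varpi_\alpha(H) > 0$, with no vanishing constraint since $\a_G = \{0\}$. I would then translate $\varpi_\alpha(H) > 0$ through the pairing on $\a_0 = \R$, which is ordinary multiplication, and through the identification of $\varpi_\alpha$ with the generator singled out by the relative positive system — equivalently, using that the relative chamber is $\a_B^+ = \R_{>0}$ and that $\varepsilon_B^G = -1$ — to conclude that the condition cuts out exactly $\R_{>0}$, so $\widehat{\tau}_B^G = [\R_{>0}]$.

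The hard part will be nothing more than the orientation in (3): one must verify that $\varpi_\alpha$, read through the multiplication pairing and through the \emph{relative} rather than absolute choice of positive chamber, pairs positively with $\R_{>0}$, so that the sign $\varepsilon_B^G = -1$ does not covertly flip the half-line to $\R_{<0}$. Parts (1) and (2) carry no analytic content at all; they reduce entirely to keeping straight, for each pair $(P,Q)$, which space $\widehat{\tau}_P^Q$ is regarded as living on and what the empty-weight-set convention yields there.
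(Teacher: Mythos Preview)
Your proposal arrives at the correct answers, but by a different route than the paper. The paper works directly from Zydor's angle-cone definition $\widehat{\tau}_P^Q = [\rint A(\overline{\a_Q^+}, \overline{\a_P^+})^{\vee}]$: it identifies $A(\overline{\a_G^+},\overline{\a_G^+})=\{0\}$, $A(\overline{\a_B^+},\overline{\a_B^+})=\R$, and $A(\overline{\a_G^+},\overline{\a_B^+})=\R_{\ge 0}$, then takes duals under the multiplication pairing and relative interiors to obtain $[\R]$, $[\{0\}]$, $[\R_{>0}]$. You instead unpack $\widehat{\tau}_P^Q$ in Arthur-style language via the fundamental weights $\widehat{\Delta}_P^Q$ together with a vanishing constraint on the $\a_Q$-component; in rank one this is an equivalent and arguably more transparent description for readers coming from the classical theory, whereas the paper's computation has the advantage of exercising exactly the cone formalism on which the subsequent $\sigma$- and $\Gamma$-calculations rest. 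One small correction: the sign $\varepsilon_B^G=-1$ plays no role whatsoever in the definition of $\widehat{\tau}_B^G$ --- it enters only in the alternating sums defining $\sigma$ and $\Gamma$ later on --- so your worry about a covert flip of the half-line is misplaced; the orientation in (3) is fixed simply by $\a_B^+=\R_{>0}$.
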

\begin{proof}
        By definition, $\widehat{\tau}^G_G = [\rint A(\overline{\a_G^+},
    \overline{\a_G^+})^{\vee}]$ and $\widehat{\tau}^{B}_{B} = [\rint
    A(\overline{\a_{B}^+}, \overline{\a_{B}^+})^{\vee}]$. \quash{By
    \cref{easy-angle-cone-computation},}We have $A(\overline{\a_G^+},
    \overline{\a_G^+})^{\vee} = (\{ 0 \})^{\vee} = \a_{0'}$ and thus
    $\widehat{\tau}^G_G$ is identically one. Similarly, $A(\overline{\a_{B}^+},
    \overline{\a_{B}^+})^{\vee} = (\a_{0'})^{\vee} = \{ 0 \}$. Finally $[\rint\{ 0 \}] = [\{ 0 \}]$. Thus $\widehat{\tau}^{B}_{B}$ is equal to $[\{ 0 \}]$.

    Next, $\widehat{\tau}_{B}^G = [\rint A(\overline{\a_G^+},
    \overline{\a_{B}^+})^{\vee}]$, and noting that $\overline{\a_G^+} = \{ 0
    \}$ is the minimal face of $\a_{0'}$, \quash{by
    \cref{easy-angle-cone-computation},} we have $A(\overline{\a_G^+},
    \overline{\a_{B}^+})^{\vee} = (\overline{\a_{B}^+})^{\vee} =
    \overline{\a_{B}^+}$. It follows that $\widehat{\tau}_{B}^G =
    [\rint\overline{\a_{B}^+}] = [\a_{B}^+] = [\R_{>0}]$.
\end{proof}


We also record some of the intermediate computations above for future use.

\begin{proposition}\
    \begin{enumerate}
        \item $A(\overline{\a_G^+}, \overline{\a_G^+}) = \{0\}$.
        \item $A(\overline{\a_{B}^+}, \overline{\a_{B}^+}) = \a_{0'} = \R$.
        \item $A(\overline{\a_G^+}, \overline{\a_{B}^+}) = \overline{\a_{B}^+} = \R_{\geq 0}.$
    \end{enumerate}
\end{proposition}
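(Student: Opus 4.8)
The plan is to recognize that all three identities were in fact already derived — if not displayed — inside the proof of the preceding proposition, so that the only task here is to extract and record them. Reading that proof line by line: the computation of $\widehat{\tau}^G_G$ used $A(\overline{\a_G^+},\overline{\a_G^+}) = \{0\}$ (the cone appearing there before the dual is taken), the computation of $\widehat{\tau}^{B}_{B}$ used $A(\overline{\a_{B}^+},\overline{\a_{B}^+}) = \a_{0'} = \R$, and the computation of $\widehat{\tau}_{B}^G$ used $A(\overline{\a_G^+},\overline{\a_{B}^+}) = \overline{\a_{B}^+} = \R_{\geq 0}$, the last being justified there by the observation that $\overline{\a_G^+} = \{0\}$ is the minimal face of $\a_{0'}$. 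On this reading there is nothing further to prove, and I would simply cite the relevant steps.

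If one instead wants each identity established independently, I would unwind the definition of the angle cone $A(F_1,F_2)$ against the $\SL_2$ data recalled at the start of this appendix: $\a_{0'} = \a_0 = \R$ with pairing given by multiplication, $\overline{\a_G^+} = \{0\}$, and $\overline{\a_{B}^+} = \R_{\geq 0}$. For (1) both arguments are the minimal face $\{0\}$, giving $A = \{0\}$; for (2) both arguments are the top face $\R_{\geq 0}$, giving the full ambient space $\a_{0'} = \R$; for (3) the first argument is again the minimal face, which forces $A(\{0\},\overline{\a_{B}^+}) = \overline{\a_{B}^+} = \R_{\geq 0}$. As an independent cross-check, since $A(F_1,F_2)$ is a closed convex cone one may also recover these from the preceding proposition by biduality $(A^\vee)^\vee = A$: $\widehat{\tau}^G_G = [\R]$ gives $A(\overline{\a_G^+},\overline{\a_G^+}) = \R^\vee = \{0\}$; $\widehat{\tau}^{B}_{B} = [\{0\}]$ gives $A(\overline{\a_{B}^+},\overline{\a_{B}^+}) = (\{0\})^\vee = \R$; and $\widehat{\tau}_{B}^G = [\R_{>0}]$, with $\R_{>0} = \rint \R_{\geq 0}$, gives $A(\overline{\a_G^+},\overline{\a_{B}^+})^\vee = \R_{\geq 0}$ and hence $A(\overline{\a_G^+},\overline{\a_{B}^+}) = (\R_{\geq 0})^\vee = \R_{\geq 0}$.

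I do not expect any real obstacle — the statement is a bookkeeping summary of computations already in hand — but two points warrant a moment's care. First, the pairing on $\a_{0'} = \R$ used to form $(\cdot)^\vee$ is ordinary multiplication, so that $(\{0\})^\vee = \R$, $\R^\vee = \{0\}$, and $\R_{\geq 0}$ is self-dual; one should not confuse $\R_{\geq 0}$ with $\R_{\leq 0}$. Second, on the biduality route one must pass through a closure when undoing the $\rint$ in the definition of $\widehat{\tau}$, which is harmless here since every cone involved is closed and, in the only nontrivial instance, one is merely identifying $\R_{>0}$ with $\rint \R_{\geq 0}$.
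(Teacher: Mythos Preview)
Your proposal is correct and matches the paper exactly: the paper gives no separate proof for this proposition, prefacing it only with ``We also record some of the intermediate computations above for future use,'' and your first paragraph extracts precisely those three computations from the preceding proof. The additional direct verification and biduality cross-check you supply are correct but go beyond what the paper does.
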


We now record the corresponding $\sigma$ and $\Gamma$ functions.

\begin{proposition}\label{sigma-calculation} For $H \in \a_{0'}$,
    \begin{enumerate}
        \item $\sigma_G^G(H) = [\{0\}](H)$,
        \item $\sigma_B^B(H) = 0$,
        \item $\sigma_B^G(H) = [\R_{>0}](H)$.
    \end{enumerate}
\end{proposition}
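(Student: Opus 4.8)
The plan is to unwind the definitions of the $\sigma$ functions from \cite{zydor} in terms of the combinatorial data already computed, namely the cones $A(\overline{\a_P^+}, \overline{\a_Q^+})$ and the characteristic functions $\widehat{\tau}_Q^P$. Recall that the $\sigma_Q^P$ functions are defined as an alternating sum over parabolics: $\sigma_Q^P = \sum_{R \supseteq Q} (-1)^{\dim \a_R^P} \,\widehat{\tau}_R^P$ (or whatever precise sign convention \cite{zydor} uses, via the Langlands combinatorial lemma), so the computation reduces to plugging in the three indicator functions from the preceding proposition.

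First I would handle $\sigma_G^G$: here the only parabolic containing $G$ is $G$ itself, so the sum collapses to the single term $\widehat{\tau}_G^G = [\R]$, but wait --- we need $\sigma_G^G(H) = [\{0\}](H)$, so in fact the relevant sum must involve the relative chamber data. I would instead recall that $\sigma_Q^P$ is built to be the characteristic function of the set where $H$ lies in $\a_Q^P$-part cone but \emph{not} in any larger one, i.e.\ $\sigma_Q^P(H) = \widehat{\tau}_Q^P(H) \cdot (\text{something supported on the closure of } \a_Q^+)$; concretely for $SL_2$ the three cases are governed by whether $H = 0$ or $H \in \R_{>0}$ or $H \in \R_{<0}$. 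For $P = Q = G$, one gets the indicator of $\a_G = \{0\}$ intersected appropriately, giving $[\{0\}]$; for $P = Q = B$, the function $\widehat{\tau}_B^B = [\{0\}]$ combines with the $\varepsilon$-signs ($\varepsilon_B^B = 1$) and the complementary term to cancel, yielding the zero function; and for $P = G$, $Q = B$, the function picks out $\a_B^+ = \R_{>0}$, matching $\widehat{\tau}_B^G$.

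The key steps, in order, are: (i) state the precise definition of $\sigma_Q^P$ from \cite{zydor} (likely $\sigma_Q^P = \sum_{R \colon Q \subseteq R \subseteq P} (-1)^{\dim(\a_R/\a_P)} \widehat{\tau}_R^P$, interpreted as functions on $\a_0'$ via the projection $\a_0' \to \a_Q^P$); (ii) enumerate the parabolics between $Q$ and $P$ in each of the three cases --- for $SL_2$ these chains have length one or two; (iii) substitute the values of $\widehat{\tau}_R^P$ from the proposition above together with the signs $\varepsilon$ computed in \cref{chambers}; (iv) simplify, using that $[\R] - [\R_{>0}] - [\R_{<0}] = [\{0\}]$ for the $\sigma_B^B$ cancellation and the $\sigma_G^G$ identity. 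I expect the main obstacle to be purely bookkeeping: getting the sign conventions and the direction of the projection $\a_0' \to \a_Q^P$ exactly right so that the alternating sum lands on $[\{0\}]$, $0$, and $[\R_{>0}]$ rather than their reflections or complements --- there is no real analytic content, just care with \cite{zydor}'s indexing. Once the definition is pinned down, each case is a one-line substitution.
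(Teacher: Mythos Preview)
Your proposal has a genuine gap: the definition of $\sigma_P^Q$ you guess is not the one used here, and with your guess the computation does not come out. The paper (following \cite{zydor}) sets $\sigma_P^Q(H)=\sigma(\overline{\a_Q^+},\overline{\a_P^+})(H)$ with
\[
\sigma(F,C)=\sum_{E\text{ face of }F}\varepsilon_F^E\,[\rint A(E,C)]\cdot[\rint E^\vee],
\]
so each summand is a \emph{product} of two indicator functions, and the sum runs over faces $E$ of $\overline{\a_Q^+}$, i.e.\ over parabolics $R\supseteq Q$ (not over $R$ between $Q$ and $P$). Your single-factor formula $\sigma_Q^P=\sum_{Q\subseteq R\subseteq P}(-1)^{\dim(\a_R/\a_P)}\widehat{\tau}_R^P$ misses the factor $[\rint E^\vee]$ and has the wrong index range; you even observe the symptom yourself when $\sigma_G^G$ comes out as $[\R]$ instead of $[\{0\}]$. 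It is precisely the extra factor $[\rint(\overline{\a_G^+})^\vee]=[\R]$ and $[\rint(\overline{\a_B^+})^\vee]=[\R_{>0}]$, together with the fact that for $\sigma_G^G$ the first factor is $[\rint A(\overline{\a_G^+},\overline{\a_G^+})]=[\{0\}]$, that forces the right answers.

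Concretely, once the correct definition is in hand the three cases are one line each: $\sigma_G^G=[\{0\}]\cdot[\R]=[\{0\}]$; $\sigma_B^B=[\R]\cdot[\R_{>0}]-[\R_{>0}]\cdot[\R]=0$ (two terms, coming from the two faces $\overline{\a_B^+}$ and $\overline{\a_G^+}$ of $\overline{\a_B^+}$, with signs $\varepsilon_B^B=1$ and $\varepsilon_B^G=-1$); and $\sigma_B^G=[\R_{>0}]\cdot[\R]=[\R_{>0}]$ (a single term since $\overline{\a_G^+}$ has only itself as a face). Your identity $[\R]-[\R_{>0}]-[\R_{<0}]=[\{0\}]$ never enters, and the attempted repair via ``something supported on the closure of $\a_Q^+$'' is too vague to substitute for the missing factor. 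The fix is simply to start from the correct two-factor definition; after that, your plan of enumerating parabolics and substituting the previously computed cones is exactly right.
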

\begin{proof}

    By definition, for any $P, Q \in \F^G(P_0')$ such that $P \subset Q$, we
    have $\sigma_P^Q(H) = \sigma(\overline{\a_Q^+}, \overline{\a_P^+})(H)$,
    where $$\sigma(F, C) \coloneqq \sum_{E \subset F} \varepsilon_F^E[\rint
    A(E, C)][\rint E^\vee].$$ Now, in general, the faces of the cone
    $\overline{\a_P^+}$ are in bijection with the $Q \in \F^G(P) \cap
    \F^G(P_0')$ via the map $Q \mapsto \overline{\a_Q^+}$. Thus, the faces of
    $\overline{\a_G^+}$ are itself (as $\F^G(G) \cap \F^G(P_0') = \{ G \}$),
    and the faces of $\overline{\a_B^+}$ are itself and $\overline{\a_G^+}$ (as
    $\F^G(B) \cap \F^G(P_0') = \{ G, B \}$). Thus,
    \begin{align*}
        \sigma_G^G &= 1\cdot[\rint A(\overline{\a_G^+}, \overline{\a_G^+})][\rint (\overline{\a_G^+})^\vee] = 1\cdot[\{0\}][\R] = [\{0\}]\\
        \sigma_B^B &= 1\cdot[\rint A(\overline{\a_B^+}, \overline{\a_B^+})][\rint (\overline{\a_B^+})^\vee] -1 \cdot [\rint A(\overline{\a_G^+}, \overline{\a_B^+})][\rint (\overline{\a_G^+})^\vee] = 0\\
        \sigma_B^G &= 1 \cdot [\rint A(\overline{\a_G^+}, \overline{\a_B^+})][\rint (\overline{\a_G^+})^\vee] = 1\cdot[\R_{>0}][\R] = [\R_{>0}],
    \end{align*}
    as desired.
\end{proof}

\begin{proposition}\label{gamma-calc}
    For $H, X \in \a_{0'}$,
    \begin{enumerate}
        \item $\Gamma_G^G(H, X) = [\{ 0 \}](H)$,
        \item $\Gamma_B^B(H, X) = [\{ 0 \}](H - X)$,
        \item $\Gamma_B^G(H, X) = [\R_{>0}](H) - [\R_{>0}](H - X)$.
    \end{enumerate}
\end{proposition}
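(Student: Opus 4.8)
The plan is to unwind the combinatorial definition of $\Gamma_P^Q$ recorded in \cref{part-two} and substitute the indicator functions already computed in this appendix. Recall that that definition presents $\Gamma_P^Q(H,X)$ as a signed sum over the parabolics $R$ with $P \subseteq R \subseteq Q$,
\[\Gamma_P^Q(H, X) \;=\; \sum_{\{R \,:\, P \subseteq R \subseteq Q\}} \varepsilon_R^Q\, \tau_P^R(H)\, \widehat{\tau}_R^Q(H - X),\]
so the whole computation reduces to three inputs, all already available: (i) the $\widehat{\tau}$-values, supplied by the first proposition of this subsection; (ii) the $\tau$-values $\tau_G^G = [\{0\}]$, $\tau_B^B = [\R]$, $\tau_B^G = [\R_{>0}]$, which are the undualized analogues of the $\widehat{\tau}$-values and are read off from the angle-cone proposition; and (iii) the signs $\varepsilon_R^Q$ and the list of intermediate parabolics $R$, both of which were already identified in the proof of \cref{sigma-calculation}.

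For the index set I would reuse the bijection between faces of $\overline{\a_P^+}$ and $\F^G(P)\cap\F^G(P_0')$: there are no proper intermediate parabolics for $G\subseteq G$ or for $B\subseteq B$, while for $B\subseteq G$ the sum has exactly the two terms $R=B$ and $R=G$. The first two cases then collapse to a single term, the degenerate factor being absorbed because $\widehat{\tau}_G^G=[\R]$ and $\tau_B^B=[\R]$ are identically one: $\Gamma_G^G(H,X)=\varepsilon_G^G\,\tau_G^G(H)\,\widehat{\tau}_G^G(H-X)=[\{0\}](H)$, and $\Gamma_B^B(H,X)=\varepsilon_B^B\,\tau_B^B(H)\,\widehat{\tau}_B^B(H-X)=[\{0\}](H-X)$. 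For $B\subseteq G$, the $R=G$ summand is $\varepsilon_G^G\,\tau_B^G(H)\,\widehat{\tau}_G^G(H-X)=[\R_{>0}](H)$ and the $R=B$ summand is $\varepsilon_B^G\,\tau_B^B(H)\,\widehat{\tau}_B^G(H-X)=-[\R_{>0}](H-X)$, using $\varepsilon_B^G=-1$ from the chamber computation at the start of the appendix; adding them gives $[\R_{>0}](H)-[\R_{>0}](H-X)$, as claimed.

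There is no genuinely hard step here — it is all bookkeeping — but the one point requiring care is keeping the two normalizations straight. In the ``honest function on $\a_{0'}$'' setup of \cite{zydor} the functions $\tau_B^B=[\R]$ and $\widehat{\tau}_B^B=[\{0\}]$ really are different (in contrast to the classical normalization, where both are identically one), and it is precisely this asymmetry, together with the correct placement of the argument $H$ in the $\tau$-factor and $H-X$ in the $\widehat{\tau}$-factor, that forces the $X$-dependence of $\Gamma_B^B$ and the exact cancellation pattern in $\Gamma_B^G$. Before committing the proof I would therefore re-check the precise form of the definition of $\Gamma_P^Q$ in \cref{part-two} — in particular the orientation of the shift by $X$ and the index carried by $\varepsilon$ — against these two small cases, since a slip there would produce a superficially plausible but wrong answer.
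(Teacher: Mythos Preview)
Your argument is correct and produces exactly the same three term-by-term computations as the paper, but you package the definition differently. The paper works directly from the cone form
\[
\Gamma_P^Q(H,X)=\Gamma\bigl(A(\overline{\a_Q^+},\overline{\a_P^+}),H,X\bigr)=\sum_{F}\varepsilon_F^{F_0}\,[\rint A(F,C)](H)\,[\rint F^\vee](H-X),
\]
summing over faces $F$ of the angle cone $C=A(\overline{\a_Q^+},\overline{\a_P^+})$ and using the identity $A(A(F,G),A(F,C))=A(G,C)$; you instead invoke the parabolic-indexed form $\sum_R\varepsilon_R^Q\,\tau_P^R(H)\,\widehat{\tau}_R^Q(H-X)$ and feed in the $\widehat{\tau}$-values already tabulated together with $\tau$-values you read off the angle-cone proposition. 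Via the face--parabolic bijection these are the same sum, so the two routes are equivalent; your version has the virtue of reusing the $\widehat{\tau}$-table verbatim, while the paper's is marginally more self-contained in that it does not require naming the $\tau_P^R$ (which the appendix never explicitly records). Your closing caveat is well placed: the one thing you must actually verify is that the parabolic-indexed formula you quote is the one stated in \cref{part-two}, since the paper's own proof takes the cone form as the definition.
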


\begin{proof}

    By definition, for any $P, Q \in \F^G(P_0')$ such that $P \subset Q$ and
    $H, X \in \a_{0'}$, $\Gamma_P^Q(H, X) = \Gamma(A(\overline{\a_Q^+},
    \overline{\a_P^+}), H, X)$, where $$\Gamma(C, H, X) \coloneqq \sum_{F \subset C} \varepsilon_F^{F_0}[\rint A(F, C)](H)[\rint F^\vee](H - X).$$ In
    general, there is an inclusion-preserving bijection between faces of $A(F,
    C)$ and faces of $C$ containing $F$, given by sending $G \subset C$ such
    that $G \supset F$ to $A(F, G)$. Thus, $A(\overline{\a_G^+},
    \overline{\a_G^+})$ and $A(\overline{\a_B^+}, \overline{\a_B^+})$ have no
    proper faces, while $A(\overline{\a_G^+}, \overline{\a_B^+})$ has a single
    proper face $A(\overline{\a_G^+}, \overline{\a_G^+})$. It is also a general
    fact that $A(A(F, G), A(F, C)) = A(G, C)$. Thus, we compute
    \begin{align*}
        \Gamma_G^G(H, X) &= 1 \cdot [\rint A(\overline{\a_G^+}, \overline{\a_G^+})](H)[\rint A(\overline{\a_G^+}, \overline{\a_G^+})^\vee](H - X)\\ &= [\{ 0 \}](H)[\R](H - X) \\ &= [\{ 0 \}](H).
    \end{align*}
    Similarly,
    \begin{align*}
        \Gamma_B^B(H, X) &= 1 \cdot [\rint A(\overline{\a_B^+}, \overline{\a_B^+})](H)[\rint A(\overline{\a_B^+}, \overline{\a_B^+})^\vee](H - X)\\
        &= [\R](H)[\{ 0 \}](H - X)\\
        &= [\{ 0 \}](H - X).
    \end{align*}
    Finally,
    \begin{align*}
        \Gamma_B^G(H, X) &= 1 \cdot [\rint A(\overline{\a_G^+}, \overline{\a_B^+})](H)[\rint A(\overline{\a_G^+}, \overline{\a_G^+})^\vee](H - X) \\ &\phantom{=} -1 \cdot [\rint A(\overline{\a_B^+}, \overline{\a_B^+})](H)[\rint A(\overline{\a_G^+}, \overline{\a_B^+})^\vee](H - X)\\
        &= [\R_{>0}](H)[\R](H - X) - [\R](H)[\R_{>0}](H - X)\\
        &= [\R_{>0}](H) - [\R_{>0}](H - X),
    \end{align*}
    so we win.
\end{proof}



\end{document}
\newpage

\section{The Relative Trace Formula for Galois Periods of Tori}\label{rtf-for-rk-one-tori}

\ifSubfilesClassLoaded{%
\section{The Relative Trace Formula for Galois Periods of Tori}%
}{}

Let $E/F$ be a quadratic extension of number fields. This appendix is devoted to the statement and proof of the relative trace formula for Galois periods of tori of rank one. We begin by classifying the $E$-tori of $E$-rank one which admit Galois involutions. We then classify the corresponding Galois symmetric pairs, and describe the adelic and rational points of the associated Galois symmetric space. Finally, we state and prove the relative trace formula for such Galois symmetric spaces.

\subsection{Rank One Tori and Galois Symmetric Spaces}

\subsubsection{Classification of Rank One Tori}

We recall the classification of rank one tori over $E$. The proposition below is well-known; we recall the proof for the convenience of the reader.

\begin{proposition}
    A rank one torus over $E$ is either isomorphic to $\G_{m, E}$ (and thus split), or the kernel of the norm map from a quadratic field extension $M/E$, $$H = \ker[\Nm_{M/E} : \Res_{M/E} \G_{m, M} \to \G_{m, E}],$$ and thus anisotropic.
\end{proposition}

\begin{proof}
    For any field $K$, the endomorphisms of the split $K$-torus of rank one are isomorphic to $\Z$, so the automorphisms of the split $K$-torus are precisely $\{ \pm 1 \}$. A rank one torus over $E$ is by definition an $E$-form of $\G_{m, E}$. The $E$-forms of $\G_{m, E}$ are determined (up to isomorphism) by $H^1(E, \Aut_E\G_{m, E}) = H^1(E, \{ \pm 1 \})$ so we are left to compute this Galois cohomology group. Here $\{ \pm 1 \}$ has the trivial $\Gal(\overline{E}/E)$ action, because $\Gal(\overline{E}/E)$ acts trivially on $\Aut_E\G_{m, E}$. Thus, $H^1(E, \{ \pm 1 \}) = \Hom_{cts}(\Gal(\overline{E}/E), \{ \pm 1 \})$; this later module is in correspondence with quadratic extensions of $E$ via the bijection between continuous quadratic characters $\chi : \Gal(\overline{E}/E) \to \{ \pm 1 \}$ and $\ker \chi$. The trivial character clearly corresponds to the split torus over $E$. A short calculation shows that the remaining $E$-tori of rank one are as described (choose a basis for the quadratic extension $M/E$ and realize the torus in $\GL_{2, E} \simeq \GL(M)$).
\end{proof}

We can arrive at a more uniform statement of the proposition by also considering the unique split quadratic étale $E$-algebra $M = E \x E$ and interpreting $\Res_{M/E} \G_{m, M}$ as $\G_{m, E} \x \G_{m, E}$, with norm map $\Nm_{M/E} : \Res_{M/E} \G_{m, M} = \G_{m, E} \x \G_{m, E} \to \G_{m, E}$ given by multiplying both factors. In this way, we can understand the split $E$-torus as $\ker[\Nm_{M/E} : \Res_{M/E} \G_{m, M} \to \G_{m, E}]$, i.e. the copy of $\G_{m, E}$ in $\G_{m, E} \x \G_{m, E}$ embedded via the anti-diagonal map $x \mapsto (x, x^{-1})$.

For $M/E$ an étale quadratic $E$-algebra, write $\Nm^1_{M/E}$ for the $E$-torus given by the kernel of the norm map $\Nm_{M/E} : \Res_{M/E} \G_{m, M} \to \G_{m, E}$, $$\Nm^1_{M/E} \coloneqq \ker[\Nm_{M/E} : \Res_{M/E} \G_{m, M} \to \G_{m, E}].$$

\begin{proposition}
    The isomorphism classes of rank one tori over $E$ are in correspondence with étale quadratic $E$-algebras; the torus corresponding to $M/E$ is $\Nm^1_{M/E}$.
\end{proposition}

\subsubsection{Galois Involutions on Tori}

Recall the definition of Galois structure from \cref{galois-symmetric-space-review}.


If $M/E$ is a quadratic étale $E$-algebra, a Galois structure on $T = \Res_{E/F} \Nm_{M/E}^1$ is the data of an $F$-group $T'$ and an isomorphism such that $$T \simeq \Res_{E/F} T'_E.$$ Since $T_E \simeq T'_E \x T'_E$ and $T$ has rank two, we see that $T'$ must be an $F$-torus of rank one.

\begin{figure}[b!]
    \centering
    \begin{tikzcd}
                                       & M \arrow[ld, no head] \arrow[d, no head] \arrow[rd, no head] &                                 \\
    L = M^{\sigma} \arrow[rd, no head] & E = M^{\sigma\tau} \arrow[d, no head]                        & L' = M^\tau \arrow[ld, no head] \\
                                       & F                                                            &                                
    \end{tikzcd}
    \caption{A biquadratic extension of $M/F$ with Galois group $\Gal(M/F) = \{ 1, \sigma, \tau, \sigma\tau = \tau\sigma \}$. We say $L'$ is the reflection of $L$ through $E$. }
    \label{biquadratic-ext}
\end{figure}
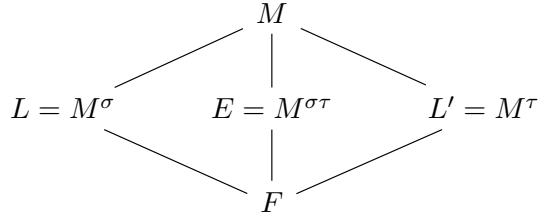

\begin{proposition}\label{rank-one-tori-galois-symmetric-space}
    Let $M/E$ be a quadratic étale $E$-algebra.
    \begin{enumerate}
        \item When $M$ is field extension, $\Res_{E/F} \Nm^1_{M/E}$ admits a Galois structure if and only if $M/F$ is biquadratic. In this case, there are two non-isomorphic Galois structures, given by the Galois symmetric pairs $$(\Res_{E/F} \Nm^1_{M/E}, \Nm^1_{L/F}) \text{ and } (\Res_{E/F} \Nm^1_{M/E}, \Nm^1_{L'/F}),$$ where $L, L'$ are the two quadratic subextensions of $M/F$ different from $E$. See \cref{biquadratic-ext}. 
        \item When $M$ is split, i.e. $M = E \x E$, $\Res_{E/F} \Nm^1_{M/E} = \Res_{E/F} \G_{m, E}$ admits two non-isomorphic Galois structures, given by the Galois symmetric pairs $$(\Res_{E/F} \G_{m, E}, \Nm^1_{E/F}) \text{ and } (\Res_{E/F} \G_{m, E}, \G_{m, F}).$$
    \end{enumerate}
\end{proposition}
\begin{proof}
    Fix a quadratic étale $E$-algebra $M/E$ and set $T \coloneqq \Res_{E/F} \Nm^1_{M/E}$. By our remark above, any $T'$ such that $T \simeq \Res_{E/F} T'_E$ must be an $F$-torus of rank one. Thus, let $L/F$ be a quadratic étale $F$-algebra. Now, it is well-known that $$\Nm^1_{L/F} \x_F E \simeq \Nm^1_{L \otimes_F E/E}.$$ It follows that for every quadratic étale $F$-algebra $L$ such that $L \otimes_F E = M$, we obtain a Galois structure. It is easy to see that the only étale $E$-algebras you obtain via $L \otimes_F E$ are split (when $L$ is split, or $L = E$) or biquadratic over $F$ and containing $E$ (when $L$ is a field distinct from $E$). Indeed, when $L$ is a field distinct from $E$, $L \otimes_F E$ will not be a field unless the natural multiplication map $L \otimes_F E \to L \cdot E$ is an isomorphism. Thus we have $M = L \otimes_F E$ precisely when $M$ is biquadratic and $L$ is a complimentary quadratic subfield to $E \subset M$. This covers the non-split cases. In the split cases, $L = F \x F$ or $L = E$; when $L = F \x F$, $\Nm_{L/F}^1 = \G_{m, F}$; when $L = E$, $\Nm_{L/F}^1 = \Nm_{E/F}^1$, the norm torus associated to $E/F$.

    To complete the proof, it suffices to notice that
    given an isomorphism $$\Res_{E/F} \Nm_{M/E}^1 \simeq \Res_{E/F} \Nm^1_{M'/E}$$ with $M/E$ arising from $L \otimes_F E$ as above, and $M'/E$ arbitrary, we necessarily obtain an isomorphism $\Nm_{M/E}^1 \simeq \Nm^1_{M'/E}$. Indeed, if we write $c \in \Gal(E/F)$ for the non-trivial element, after base change to $E$ we obtain an isomorphism $$\Nm_{M/E}^1 \x \Nm_{M/E}^1 \simeq \Nm^1_{M'/E} \x c^*\Nm^1_{M'/E}.$$ Let $\chi_{M/E}, \chi_{M'/E} : \Gal(\overline{F}/E) \to \{ \pm 1\}$ be the quadratic characters associated to $\Nm_{M/E}^1$ and $\Nm_{M'/E}^1$. Passing to character lattices, we obtain two $\Gal(\overline{F}/E)$-representations of rank two; the one on the left is isotypic for $\chi_{M/E}$ and the one on the right is a sum of $\chi_{M'/E}$ and $\chi_{c(M')/E}$. Since these modules are isomorphic, we must have that $\chi_{M/E} = \chi_{M'/E} = \chi_{c(M')/E}$, and we conclude that $\Nm_{M/E}^1 \simeq \Nm^1_{M'/E}$ as desired.
\end{proof}

Again, we can unify the two cases above by considering quartic étale $F$-algebras $M/F$, which contain $E/F$ and satisfy $\Aut_F(M) \simeq \Z/2 \x \Z/2$. We call such generalized biquadratic extensions.

\begin{proposition}\label{biquadratic-tori}
    Let $M/E$ be a quadratic étale $E$-algebra. Set $T = \Res_{E/F} \Nm^1_{M/E}$.
    \begin{enumerate}
        \item $T$ admits a Galois structure if and only if $M/F$ is a generalized biquadratic extension.
        \item If $M/F$ is a generalized biquadratic extension, $T$ admits two non-isomorphic Galois structures, corresponding to the two quadratic étale $F$-subalgebras $L$, $L'$ of $M/F$ different from the copy of $E$ in $M$, given by the Galois symmetric pairs $$(\Res_{E/F} \Nm^1_{M/E}, \Nm^1_{L/F}) \text{ and } (\Res_{E/F} \Nm^1_{M/E}, \Nm^1_{L'/F}).$$
    \end{enumerate}
\end{proposition}

If $M/F$ is a generalized biquadratic extension, and $L/F$ is a quadratic étale $F$-subalgebra fixed by the automorphism $\sigma \in \Aut_F(M)$, the Galois involution associated to the pair $(\Res_{E/F} \Nm^1_{M/E}, \Nm^1_{L/F})$ is given informally by restricting $\sigma$ to $\Res_{E/F} \Nm^1_{M/E}$.


\quash{TODO: How to differentiate which case you're in when you're embedded in a larger group? This must be related to roots of the larger group and how the character group of the torus embeds into the character group of a maximal torus containing it.}

\quash{
\begin{remark}
    I think the arguments given above also work for quadratic étale $\mathcal{O}_E$-algebras; i.e. for a fixed quadratic extension $E/F$, a norm torus associated to a quadratic étale $\mathcal{O}_E$-algebra $M$ has a canonical Galois involution/is defined over $\mathcal{O}_F$ if and only if $M = L \otimes_{\mathcal{O}_F} \mathcal{O}_E$, where $L$ is some quadratic étale $\mathcal{O}_F$-algebra. This might be useful if unramified orbital integrals in the torus case end up being some kind of lattice point counts.
\end{remark}
}

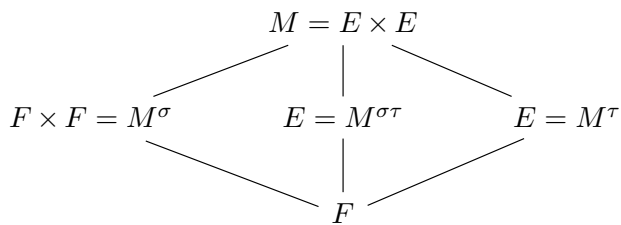
\begin{figure}[h!]
    \centering
    \begin{tikzcd}
                                       & M = E \x E \arrow[ld, no head] \arrow[d, no head] \arrow[rd, no head] &                                 \\
    F \x F = M^{\sigma} \arrow[rd, no head] & E = M^{\sigma\tau} \arrow[d, no head]                        & E = M^\tau \arrow[ld, no head] \\
                                       & F                                                            &                                
    \end{tikzcd}
    \caption{A generalized biquadratic extension $M = E \x E$ with automorphism group $\Aut(M/F) = \{ 1, \sigma, \tau, \sigma\tau = \tau\sigma \}$. Here $\sigma$ is component-wise Galois conjugation, and $\tau$ swaps coordinates. Note that while $M$ contains two copies of $E$, they are embedded differently (diagonally vs antidiagonally).}
    \label{generalized-biquadratic-ext}
\end{figure}

The following terminology will be useful going forward.

\begin{definition}\label{reflection}
    Let $L/F$ be a quadratic étale $F$-algebra. The reflection of $L$ through $E$ is the quadratic étale $F$-algebra $L'$ obtained by forming the generalized biquadratic extension $M \coloneqq L \otimes_F E$, and taking fixed points for the unique involution in $\Aut_F(M)$ not corresponding to $E$ or $L$. See \cref{biquadratic-ext}.
\end{definition}

We denote the reflection of $L$ through $E$ by a prime superscript, $L'$.

It is clear from the definition that reflection through $E$ is an involution on the set of all quadratic étale $F$-algebras, which preserves the set of quadratic field extensions distinct from $E$ and interchanges $E$ and the split algebra $F \x F$.

\subsubsection{Galois Symmetric Spaces from Tori}\label{galois-symmectric-spaces-from-tori}

Let $(T, T')$ be a Galois symmetric pair associated to the generalized biquadratic extension $M/F$ and quadratic étale subextension $L/F$. Write $\theta$ for the corresponding Galois involution, and write $S = T/T'$ for the corresponding Galois symmetric space. By \cref{concrete-description-of-symm-spc}, under the symmetrization map we can identify $S$ with $S = \{ t \in T : t\theta(t) = 1\}$. Recall that $T$ acts on $S$ by $\theta$-twisted conjugation: $t \star x \coloneqq tx\theta(t)^{-1}$, for $t \in T, x \in S$. We now compute $S$ in terms of $M/F$ and $L/F$.

\begin{proposition}
    Let $(T, T')$ be a Galois symmetric pair associated to the generalized biquadratic extension $M/F$ and quadratic étale subextension $L/F$. Then the corresponding Galois symmetric space $S$ is given by $$S = \Nm^1_{L'/F}$$ where $L'/F$ is the quadratic étale $F$-subalgebra distinct from $E, L$ in $M$, i.e. $L'$ is the reflection of $L$ through $E$.
\end{proposition}

\begin{proof}
     Set $T = \Res_{E/F} \Nm^1_{M/E}$, $T' = \Nm^1_{L/F}$, and consider the following commutative diagram of $F$-algebraic groups, where all rows and columns are exact:

    \begin{center}
        \begin{tikzcd}
        {\G_{m, F}} \arrow[r]                                                  & {\Res_{E/F} \G_{m, E}} \arrow[r]           & \Nm^1_{E/F}                         \\
        {\Res_{L/F} \G_{m, L}} \arrow[u] \arrow[r] & {\Res_{M/F} \G_{m, M}} \arrow[u] \arrow[r] & \Res_{L/F}\Nm^1_{M/L} \arrow[u] \\
        \Nm^1_{L/F} \arrow[u] \arrow[r]                          & \Res_{E/F}\Nm^1_{M/E} \arrow[u] \arrow[r]  & S \arrow[u]
        \end{tikzcd}
    \end{center}
    
    All of these maps can be understood as arising from various Galois involutions on $\Res_{M/F} \G_{m, M}$. Indeed, suppose $\Aut_F(M) = \{ 1, \sigma, \rho, \sigma\rho \}$, and moreover suppose $\sigma$ is the non-trivial automorphism fixing $L/F$. Write $L'/F$ for the remaining quadratic subextension of $M$ distinct from $E$ and $L$. It is fixed by $\rho$. By a mild abuse, let us write these same symbols to denote the corresponding automorphism of $\Res_{M/F} \G_{m, M}$.
    
    The vertical maps in the bottom-most row are given by inclusion, and the vertical maps in the top-most row of the diagram above can be summarized as $m \mapsto m\rho\sigma m$. The horizontal maps in the first column are given by inclusions, and the horizontal maps in the second column can be summarized as $m \mapsto m/\rho\sigma m$.
    
    Thus, we see that $S$ can be realized as $$S = \{ m \in \Res_{M/F} \G_{m, M} : m\sigma m = 1, m\rho\sigma m = 1 \},$$ and in turn, this can easily be identified with $$\Nm_{L'/F}^1 = \{ m \in \Res_{M/F} \G_{m, M} : m\sigma m = 1, \rho m = m \}.$$
\end{proof}

From the exact sequence $$0 \to T' \to T \to S \to 0$$ we obtain various long exact sequences in Galois cohomology relating $T(F) \star 1$ and $S(F)$, and $T(\A) \star 1$ and $S(\A)$. The rational (resp. adelic) orbits of $T$ on $S$ are controlled by the Galois cohomology groups $\ker^1(F, S)$ (resp. $\ker^1(\A, S)$), as defined in \cref{rational-adelic-pts}. The following lemma is a refinement of \cref{fibers-of-alpha-are-finite} in the setting of Galois symmetric pairs of rank one tori.

\begin{lemma}\label{support-lemma}
    Let $\xi \in \ker^1(\A, S)$ be a relevant cocycle. Then there exists a rational point $x_{\xi} \in S(F)$ in the $T(\A)$-orbit corresponding to $\xi$. Moreover, any other rational point meeting $T(\A) \star x_{\xi}$ lies in the same $T(F)$-orbit as $x_{\xi}$, or in other words $$(T(\A) \star x_{\xi}) \cap S(F) = T(F) \star x_{\xi}.$$
\end{lemma}
\begin{proof}
    The existence of a rational point is an immediate consequence of relevance. To verify the second claim, it suffices to check the claim when $x_{\xi} = 1$. In this case, the assertion boils down to understanding the fiber of the natural map $\ker^1(F, S) \to \ker^1(\A, S)$ above the trivial cocycle. We claim that this fiber is trivial. Indeed, if $\delta \in \ker^1(F, S)$ maps to the trivial cocycle in $\ker^1(\A, S)$ then $\delta \in \Sha(T')$, where $\Sha(T') = \ker[H^1(F, T') \to H^1(\A, T')]$. Since (at worst) $T'$ splits over a quadratic extension of $F$, by the Hasse norm theorem we conclude that $\Sha(T')$ is trivial \cite[Theorem 6.1.1, (ii)]{ono}, and we win.
\end{proof}

\subsection{The Relative Trace Formula}

For the remainder of this section, fix $(T, T')$ a Galois symmetric pair associated to a generalized biquadratic extension $M/F$ and a quadratic étale $F$-subalgebra $L/F$. Let $S$ be the associated symmetric space.

\subsubsection{Setup}

Let $A_T$ be the maximal $\theta$-stable $F$-split torus in $T$, and let $A_{T'} = A_T^{\theta}$ be the maximal split $F$-torus in $T'$. Write $\a_0 \coloneqq X_*(A_T) \otimes_{\Z} \R$ and $\a_{0'} \coloneqq X_*(A_{T'}) \otimes_{\Z} \R$. There is a Harish-Chandra map $H_T : T(\A) \to \a_0$ defined as the unique map such that $$\langle \xi, H_T(t) \rangle = \log |\xi(t)|_{\A},\;\;\xi \in X^*(T)$$ where $t \in T(\A)$. Define $H_{T'} : T'(\A) \to \a_{0'}$ analogously. Let $A_T^{\infty}$ (resp. $A_{T'}^{\infty}$) be the connected component of the $\R$-points of the maximal $\Q$-split torus in $\Res_{F/\Q} A_T$ (resp. $\Res_{F/\Q} A_{T'}$).

Define $T(\A)^1 \coloneqq \{ t \in T(\A) : H_T(t) = 0 \}$. Then we have $T(\A) = A_T^{\infty}T(\A)^1$. Set $[T]^1 \coloneqq T(F) \backslash T(\A)^1$. Similarly, define $T'(\A_F)^{1, T} \coloneqq \{ t \in T'(\A_F) : H_{T'}(t)_{T} = 0 \}$. Set $Z_T^{\infty} \coloneqq A_T^{\infty} \cap A_{T'}^{\infty}$. Then we have $T'(\A) = Z_T^{\infty}T'(\A)^{1, T}$. Finally, set $[T']^{1, T} \coloneqq T(F) \backslash T(\A_F)^{1, T'}$.

\begin{remark}
    If $T$ is anisotropic, $T(\A)^1 = T(\A)$.
\end{remark}

For each relevant $\xi \in \ker^1(\A, S)$ choose a basepoint $x_{\xi} \in S(F)$ in the $\theta$-twisted $T(\A)$-orbit corresponding to $\xi$. Note that since our group is abelian, the stabilizer of $x_{\xi}$ is the same as the stabilizer of $1 \in S(F) \subset S(\A)$, which is $T'(\A)$.

\subsubsection{Test Functions}

Let $f \in \Cc(S(\A))$ be a factorizable test function. Define $$f^{\xi} \coloneqq f|_{T(\A) \star x_{\xi}}.$$ Note that since $T(\A) \star x_{\xi}$ is open in $S(\A)$, and $f$ is compactly supported, for all but finitely many $\xi$, $f^{\xi}$ is zero. Since $f$ is factorizable, $f^{\xi}$ is again factorizable, and by identifying $T(\A) \star x_{\xi}$ with $T(\A)/T'(\A)$ we see that there exists $\Phi^{\xi} \in \Cc(T(\A))$ such that for all $\eta \in T(\A) \star x_{\xi}$, $$f^{\xi}(\eta) = \int_{T'(\A)} \Phi^{\xi}(st)\,dt$$ where $s \star x_{\xi} = sx_{\xi}\theta(s)^{-1} = \eta$. Using the factorization $T'(\A) = Z_T^{\infty}T'(\A)^{1, T}$, we can rewrite this as $$f^{\xi}(\eta) = \int_{T'(\A)^{1, T}} \Phi_1^{\xi}(st)\,dt$$ where $\Phi_1^{\xi}(x) = \int_{Z^{\infty}_T} \Phi^{\xi}(zx)\,dz$. Finally, we restrict each $\Phi_1^{\xi}$ to $T(\A)^1 \subset T(\A)$ to obtain a family of functions which are compactly supported and invariant under $Z_T^{\infty}$.





\subsubsection{Kernels}

Now for each $\Phi^{\xi}_1 \in \Cc(T(\A)^1)$, consider the corresponding right regular action of $\Phi^{\xi}_1$ on $L^2([T]^1)$, given by the kernel $$k_{\xi}(x, y) \coloneqq \sum_{\gamma \in T(F)} \Phi^{\xi}_1(x^{-1}\gamma y).$$


\begin{lemma}
    The torus $[T]^1$ is compact.
\end{lemma}
\begin{proof}
    This is well-known.
\end{proof}

Thus, $L^2([T]^1)$ decomposes as a Hilbert space direct sum of unitary characters, each occurring with multiplicity one. For a fixed unitary character $\chi : [T]^1 \to \C^{\x}$, the right regular action of $\Phi^{\xi}_1$ is given by $$(R(\Phi^{\xi}_1)\chi)(x) = \int_{T(\A)^1} \Phi^{\xi}_1(y)\chi(xy)\,dy = \chi(x) \int_{T(\A)^1} \Phi^{\xi}_1(y)\chi(y)\,dy.$$ Define the Fourier transform of $\Phi \in \Cc(T(\A)^1)$ to be the function $\widecheck{\Phi} : \widehat{[T]^1} \to \C^{\x}$ given by $$\widecheck{\Phi}(\chi) \coloneqq (R(\Phi)\chi)(1).$$



Finally, for $f \in \Cc(S(\A))$, define $$k_{f^{\xi}}(x) \coloneqq \sum_{\eta \in S(F)} f^{\xi}(x^{-1}\eta x)$$ for all $x \in T'(\A)^{1, T}$ and define $$k_f(x) \coloneqq \sum_{\xi \in \im \alpha} k_{f^{\xi}}(x).$$ Note that there are only finitely many relevant $\xi$ which contribute non-trivially to this sum.

Of course, since our group is commutative, the kernels $k_{f^{\xi}}(x)$ are independent of $x$. Nevertheless, we are writing it in this way to demonstrate that all of our definitions work just as well in this degenerate case.

\subsubsection{The Basic Comparison}

We now come to the key comparison on which the relative trace formula for tori rests.

\begin{proposition}\label{main-comparison-for-tori}
    Let $f \in \Cc(S(\A))$ be a test function as above.
    For all $x \in T'(\A)^{1, T}$, $$\sum_{\xi \in \im \alpha} \int_{[T']^{1, T}} k_{\xi}(x, y)\,dy = k_f(x).$$
\end{proposition}
\begin{proof}
    It suffices to prove the claim for a single relevant $\xi$. Fix such, and let $x_{\xi} \in S(F)$ be the corresponding adelic basepoint. We calculate
    \begin{align*}
        k_{f^{\xi}}(x) &= \sum_{\eta \in S(F)} f^{\xi}(x^{-1}\eta x)\\
        &= \sum_{\mu \in \ker^1(F, S)} \sum_{\eta \in T(F) \star \mu} f^{\xi}(x^{-1}\eta x)
    \end{align*}
    By \cref{support-lemma}, $(T(\A) \star x_{\xi}) \cap S(F) = T(F) \star x_{\xi}$, and thus the only term which contributes to the sum is $\mu = \xi$. Therefore,
    \begin{align*}
        k_{f^{\xi}}(x) &= \sum_{\eta \in T(F) \star x_{\xi}} f^{\xi}(x^{-1}\eta x)\\
        &= \sum_{\gamma \in T(F)/T'(F)} \int_{T'(\A)^{1, T}} \Phi^{\xi}_1(x^{-1}\gamma y)\,dy\\
        &= \sum_{\gamma \in T(F)} \int_{[T']^{1, T}} \Phi^{\xi}_1(x^{-1}\gamma y)\,dy \\
        &= \int_{[T']^{1, T}} \sum_{\gamma \in T(F)} \Phi^{\xi}_1(x^{-1}\gamma y)\,dy \\
        &= \int_{[T']^{1, T}} k_{\xi}(x, y)\,dy,
    \end{align*}
    and we win.
\end{proof}

\quash{
\begin{remark}
    For test functions with supports contained in several orbits, one would have to modify the above proposition; instead of a single kernel $k_{\Phi}$, summing over $T(F)$, there would be multiple such.
\end{remark}}

\subsubsection{The Coarse RTF}

In this setting, the collection of cuspidal data is the collection of all distinct unitary characters $\chi : [T]^1 \to \C^{\x}$. Since there is no stable conjugacy, the collection of geometric data is identified with the set of rational points $S(F)$.

For a character $\chi \in \widehat{[T]^1}$, define $k_{\Phi, \chi}$ by projecting $k_{\Phi}$ onto the $\chi$-isotypic component of $L^2([T]^1)$. Define the functional $$J_{\chi}(\Phi) \coloneqq \int_{[T']^{1, T}}\int_{[T']^{1, T}} k_{\Phi, \chi}(x, y)\,dy\,dx,$$ for $\Phi \in \Cc(T(\A)^1)$. For a point $\eta \in S(F)$, define the functional $$J_{\eta}(f) \coloneqq \vol([T']^{1, T})f(\eta),$$ where $f \in \Cc(S(\A))$.




\begin{theorem}[Coarse RTF for Tori]
    For $f \in \Cc(S(\A))$ and $\{ \Phi^{\xi}_1 \}_{\xi \in \im \alpha}$ the corresponding collection of test functions supported on relevant adelic orbits, $$\sum_{\xi \in \im \alpha} \int_{[T']^{1, T}}\int_{[T']^{1, T}} \sum_{\chi} |k_{\Phi^{\xi}_1, \chi}(x, y)|\,dy\,dx < \infty,$$ and
    $$\sum_{\xi \in \im \alpha} \int_{[T']^{1, T}} \sum_{\eta \in S(F)} |k_{f^{\xi}}(x)| \,dx < \infty,$$ where the sum over the relevant $\xi$ is finite.
    Consequently we have the following equality of distributions $$\sum_{\xi \in \im \alpha} \sum_{\chi} J_{\chi}(\Phi^{\xi}_1) = \sum_{\eta \in S(F)} J_\eta(f).$$
\end{theorem}
\begin{proof}
    This is an immediate consequence of \cref{main-comparison-for-tori}, as $[T']^1$ is compact and $[T']^{1, T}$ is a closed subgroup. The equalities follow immediately from the definitions.
\end{proof}

\quash{
\begin{theorem}[Coarse RTF]
    The RTF for the Galois symmetric pair of tori $(T, T')$ reads: for all $\Phi \in \Cc(T(\A)^1)$ $$\int_{[T']^{1, T}}\int_{[T']^{1, T}} \sum_{\chi} |k_{\Phi, \chi}(x, y)| < \infty,$$
    and for all $f \in \Cc(S(\A))$
    $$\int_{[T']^{1, T}} \sum_{\eta \in S(F)} |k_f(x)| \,dx < \infty.$$
    Finally, $$\sum_{\chi} J_{\chi}(\Phi) = \sum_{\eta \in S(F)} J_\eta(f_\Phi).$$
\end{theorem}
\begin{proof}
    This is trivial; $[T]^1$ is compact and $[T']^{1, T}$ is a closed subgroup. The equalities follow immediately from the definitions.
\end{proof}}

We call the above equality of distributions the coarse RTF for $S$. We now compute all of its terms explicitly.

First, note that $k_{\Phi, \chi}(x, y) = (R(\Phi)\chi)(x)\overline{\chi(y)}$, because the $\chi$-isotypic summand of $L^2([T]^1)$ is generated by $\chi$. Thus, $$J_{\chi}(\Phi) = \int_{[T']^{1, T}} (R(\Phi)\chi)(x) \,dx \int_{[T']^{1, T}} \overline{\chi(y)}\,dy.$$ This final expression is equal to zero if $\chi|_{[T']^{1, T}}$ is non-trivial, otherwise it is equal to $\vol([T']^{1, T})(R(\Phi)\chi)(1) = \vol([T']^{1, T})\widecheck{\Phi}(\chi)$. In summary, $$J_{\chi}(\Phi) = \begin{cases}
    \vol([T']^{1, T})\widecheck{\Phi}(\chi) & \text{ if $\chi|_{[T']^{1, T}} = 1$} \\
    0 & \text{ otherwise. }
\end{cases}$$

\quash{Finally, since $f_{\Phi}$ is only supported on $T(\A) \star 1$, by \cref{support-lemma}, we have that $$J_\eta(f_{\Phi}) = \begin{cases}
    \vol([T']^{1, T})\int_{T'(\A)^{1, T}}\Phi(st)\,dt & \text{ if $\eta = s\theta(s)^{-1} \in T(F) \star 1$} \\
    0 & \text{ otherwise. }
\end{cases}$$}

Thus, we see that the content of the formal equality above is simply $$\sum_{\eta \in S(F)} f(\eta) = \sum_{\chi |_{[T']^{1, T}} = 1} \sum_{\xi \in \im \alpha} \widecheck{\Phi}^{\xi}_1(\chi).$$ This is an incarnation of the Poisson summation formula.


\ifSubfilesClassLoaded{%
\bibliography{main.bib}%
}{}

\end{document}

\bibliographystyle{plain}
\bibliography{main}

\end{document}